\setlist[enumerate,1]{label={\textup{(\arabic*)}}}
\setlist[enumerate,2]{label={\textup{(\roman*)}}}
\theoremstyle{definition}
\newtheorem{definition}{Definition}[section]
\theoremstyle{plain}
\newtheorem{theorem}[definition]{Theorem}
\newtheorem{proposition}[definition]{Proposition}
\newtheorem{corollary}[definition]{Corollary}
\newtheorem{lemma}[definition]{Lemma}
\theoremstyle{remark}
\newtheorem{remark}[definition]{Remark}
\newtheorem{example}[definition]{Example}
\newlength{\@thlabel@width}%
\newcommand{\thmenumhspace}{\settowidth{\@thlabel@width}{\upshape(i)}\sbox{\@labels}{\unhbox\@labels\hspace{\dimexpr-\leftmargin+\labelsep+\@thlabel@width-\itemindent}}}
\title[Generalized Gottlieb and Whitehead center groups of space forms]{Generalized Gottlieb and Whitehead center groups of space forms}
\author{Marek Golasi\'nski}
\address{Faculty of Mathematics and Computer Science\\
University of Warmia and Mazury\\
S\l oneczna 54 Street\\
10-710 Olsztyn, Poland}
\email{marekg@matman.uwm.edu.pl}
\author[Thiago de Melo]{Thiago de Melo}
\address{Instituto de Geoci\^encias e Ci\^encias Exatas\\ UNESP--Univ Estadual Paulista\\ Av.~24A, 1515, Bela Vista. CEP 13.506--900. Rio Claro--SP, Brazil}
\email{tmelo@rc.unesp.br}
\thanks{Both authors are supported by CAPES--Ci\^encia sem Fronteiras. Processo: 88881.068125/2014-01.}
\subjclass[2010]{Primary:  55Q52, 57S17; secondary: 55Q15,  55R05}
\keywords{Gottlieb group, lens space,    projective space, space form, Whitehead center group, Whitehead product}
\begin{document}\baselineskip=1.5em

\begin{abstract}We extend the Oprea's result $G_1(\mathbb{S}^{2n+1}/H)=\mathcal{Z}H$ to the  $1$\textsuperscript{st} generalized Gottlieb group $G_1^f(\mathbb{S}^{2n+1}/H)$ for a map $f\colon A\to \mathbb{S}^{2n+1}/H$. Then, we compute or estimate the groups $G_m^f(\mathbb{S}^{2n+1}/H)$ and $P_m^f(\mathbb{S}^{2n+1}/H)$ for some $m>1$ and finite groups $H$. 
\end{abstract}

\maketitle

\section*{Introduction}
Throughout this paper, all spaces are path-connected with homotopy types of $CW$-complexes, maps and homotopies are based. We use the standard terminology
and notations from the homotopy theory, mainly from \cite{juno-marek} and \cite{toda}. We do not distinguish
between a map and its homotopy class and we neglect the base-point in notation in case it is needed.

Let $X$ be a connected space and $\mathbb{S}^m$ the $m$-sphere. The  \textit{$m$\textsuperscript{th} Gottlieb group} $G_m(X)$ of $X$, defined first for $m=1$ in \cite{gottlieb1} and then for $m\ge 1$ in \cite{gottlieb}, is the subgroup of
the $m$\textsuperscript{th} homotopy group $\pi_m(X)$ consisting of all elements which can be
represented by a map $\alpha \colon \mathbb{S}^m\to X$ such that $\mathrm{id}_X \vee \alpha \colon X\vee \mathbb{S}^m
\to X$ extends (up to homotopy) to a map $F \colon X\times \mathbb{S}^m\to X$.
Following \cite{gottlieb}, we recall that $P_m(X)$ is the set of elements  $\alpha\in \pi_m(X)$ whose Whitehead products $[\alpha,\beta]$ are zero for all $\beta\in \pi_l(X)$ with $l\geq 1$. It turns out that $P_m(X)$
forms a subgroup of $\pi_m(X)$ called the \textit{Whitehead center group} and, by \cite[Proposition~2.3]{gottlieb}, it holds $G_m(X)\subseteq P_m(X)$. 
Some advanced attempts to compute the groups $G_m(X)$ and $P_m(X)$ for spheres and projective spaces have been made in \cite{juno-marek}.

Now, given a map $f \colon A\to X$, in view of \cite{gottlieb} (see also \cite{kim}),
the  \textit{$m$\textsuperscript{th} generalized Gottlieb group} $G^f_m(X)$ is defined 
as the subgroup of the $m$\textsuperscript{th} homotopy group $\pi_m(X)$ consisting of all elements which can be
represented by a map $\alpha \colon \mathbb{S}^m\to X$ such that $f\vee \alpha \colon A\vee \mathbb{S}^m\to X$ extends (up to homotopy) to a map $F \colon A\times \mathbb{S}^m\to X$. If $A=X$ then the group $G_1^f(X)$, also denoted by $J(f)$, is called the \textit{Jiang group} of the map $f\colon X \to X$ in honor of Bo-Ju Jiang  who recognized in \cite{jiang} their importance to the Nielsen--Wecken theory of fixed point classes. The role the group $J(f)$ played in that theory has been intensively studied in the book \cite{brownr} as well.

The  \textit{$m$\textsuperscript{th} generalized Whitehead center group}  $P^f_m(X)$, as defined in \cite{kim}, is  the set of all elements $\alpha\in\pi_m(X)$ whose Whitehead products $[\alpha,f\beta]$ are zero for all  $\beta\in\pi_l(A)$ with $l\ge 1$. It turns out that $P^f_m(X)$ forms a subgroup of $\pi_m(X)$ and $G^f_m(X)\subseteq P^f_m(X)$. 

Given a free action  of a finite group $H$ on $\mathbb{S}^{2n+1}$, Oprea \cite{oprea} has shown that $G_1(\mathbb{S}^{2n+1}/H)=\mathcal{Z}H$, the center of $H$.  Further, in the special case of a linear action of $H$ on $\mathbb{S}^{2n+1}$, a very nice representation-theoretic proof of that fact has been given in \cite{bro}. The first main advantage that we take in this paper is to show that $G_1^f(\Sigma({2n+1})/H)=\mathcal{Z}_Hf_*(\pi_1(A))$, the centralizer of $f_\ast(\pi_1(A))$ in $H$ for a map $f\colon A\to \Sigma(2n+1)/H$, where $\Sigma(2n+1)$ is a $(2n+1)$-homotopy sphere. In particular, we get that the Jiang group  $J(f)=\mathcal{Z}_Hf_*(H)$ for any $f\colon \Sigma(2n+1)/H\to \Sigma(2n+1)/H$. Further, we compute or estimate Gottlieb groups $G_m^f(\Sigma(2n+1)/H)$ and $P_m^f(\Sigma(2n+1)/H)$ for some $m>1$,  finite groups $H$ and $f\colon A \to \Sigma(2n+1)/H$. In particular, we get some results on  $G_m^f(L^{2n+1}(l; q_1,\dots,q_n))$ for the $(2n+1)$-dimensional generalized lens space $L^{2n+1}(l; q_1,\dots,q_n)$ and a map $f\colon A \to L^{2n+1}(l; q_1,\dots,q_n)$.

In Section~\ref{sec.1}, Proposition~\ref{prop.main} generalizes the Gottlieb's result \cite{gottlieb1} and states that\linebreak $G_1^f(K(\pi,1))=\mathcal{Z}_\pi f_\ast(\pi_1(A))$ for $f\colon A\to K(\pi,1)$
and Theorem~\ref{PP} states: 

\textit{If $p \colon  \tilde{X}\to X$  is a  covering map of a space $X$ and  $f \colon  A\to \tilde{X}$ then the isomorphism $p_\ast \colon  \pi_m(\tilde{X})\to \pi_m(X)$ for $m> 1$
restricts to isomorphisms \[p_{\ast|} \colon  G_m^f(\tilde{X})\to  G_m^{pf}(X) \text{ and }p_{\ast|} \colon  P_m^f(\tilde{X})\to  P_m^{pf}(X)\] for $m>1$.}

The main result of that section, generalizing \cite[\textsc{Theorem~A}]{oprea}, is Theorem~\ref{main} which implies that \[G_1^f(\Sigma(2n+1)/H)=\mathcal{Z}_H f_*(K)\] for $f\colon \Sigma(2d+1)/K \to \Sigma(2n+1)/H$ with $d\leq n$.
Then, in Corollary~\ref{CCC}, we derive:
\begin{enumerate}
\item $P_1^{\gamma_{2n+1}}(\Sigma(2n+1)/H)=G_1^{\gamma_{2n+1}}(\Sigma(2n+1)/H)=H$;

\item $P_m^{\gamma_{2n+1}}(\Sigma(2n+1)/H)=\gamma_{{2n+1}\ast}(P_m(\Sigma(2n+1)))=\gamma_{{2n+1}\ast}(G_m(\mathbb{S}^{2n+1}))$
for $m>1$, where $\gamma_{2n+1} \colon \Sigma(2n+1)\to  \Sigma(2n+1)/H$ is the quotient map.
\end{enumerate}

Section~\ref{sec.2} makes use of some results from \cite{juno-marek} to take up the systematic study of the groups $G_m(\Sigma(2n+1)/\mathbb{Z}_l)$ for some $m>1$. 

Section~\ref{sec.3} applies \cite{juno-marek} to present computations of $G_m(\mathbb{S}^{2n+1}/H)$ for $m>1$ and $H<\mathbb{S}^3$.

Finally, Section~\ref{sec.4} concludes with some $G^f_m(L^{2n+1}(l;\mathbf{q}))$ for the lens space 
$L^{2n+1}(l;\mathbf{q})=L^{2n+1}(l; q_1,\dots,q_n)$. Further, for the quotient map $\gamma_{2n+1} \colon \mathbb{S}^{2n+1}\to L^{2n+1}(l;\mathbf{q})$, Proposition~\ref{lens} states:
\begin{enumerate}
\item

%\noindent(1) 
$P_1^{\gamma_{2n+1}}(L^{2n+1}(l;\mathbf{q}))=P_1(L^{2n+1}(l;\mathbf{q}))=G_1^{\gamma_{2n+1}}(L^{2n+1}(l;\mathbf{q}))=G_1(L^{2n+1}(l; \mathbf{q}))=\mathbb{Z}_l$;
\item

%\noindent(2)
$P_m^{\gamma_{2n+1}}(L^{2n+1}(l;\mathbf{q}))=P_m(L^{2n+1}(l;\mathbf{q}))=G_m^{\gamma_{2n+1}}(L^{2n+1}(l;\mathbf{q}))=\gamma_{{2n+1}\ast}(G_m(\mathbb{S}^{2n+1}))$ for $m>1$.
\end{enumerate}

\section{Generalized Gottlieb and Whitehead center groups}\label{sec.1}

Given spaces $A$ and $X$, write $X^A$ for the space of continuous maps from $A$ into $X$ with the compact-open topology.
Next, consider the evaluation map $\mathrm{ev} \colon X^A\to X$, i.e., $\mathrm{ev}(f)=f(a_0)=x_0$ for $f\in  X^A$ and the base-point $a_0\in A$. 
Then, it holds \[G^f_m(X)=\operatorname{Im}(\mathrm{ev}_\ast \colon \pi_m(X^A,f)\to \pi_m(X,x_0))\] for $m\ge 1$.
Notice that $G^f_m(X)=P^f_m(X)=\pi_m(X)$ for any $m\geq 1$ provided $f\in P_n(X)$ for some $n\ge 1$.

Further, it holds:
\begin{remark}\label{rem.homoteq}  Let  $f\colon A\to X$.
\begin{enumerate}

\item \label{rem.homoteq.3} Any map $g \colon A'\to A$ leads to the inclusion relations $G^f_m(X)\subseteq G^{fg}_m(X)$ and $P^f_m(X)\subseteq P^{fg}_m(X)$ for $m\ge 1$.

\item\label{rem.homoteq.1} If $g\colon A'\to A$ is a homotopy equivalence then  $G^f_m(X)= G^{fg}_m(X)$ and $P^f_m(X)= P^{fg}_m(X)$ for $m\geq 1$.

\item\label{rem.homoteq.2} If  $f_\ast \colon  \pi_1(A)\to \pi_1(X)$ is the induced homomorphism by $f\colon A \to X$ and $\pi_1(X)$ acts trivially on $\pi_m(X)$ for all $m>1$ then $P_1^f(X)=\mathcal{Z}_{\pi_1(X)}f_\ast(\pi_1(A))$, the centralizer of the image $f_\ast(\pi_1(A))$ in $\pi_1(X)$. If  $f_\ast \colon  \pi_\ast(A)\to \pi_\ast(X)$ is an epimorphism then $P_m^f(X)=P_m(X)$ for all $m\ge 1$.

\item \label{rem.homoteq.4} Given a map $h \colon X\to Y$ the induced homomorphism $h_\ast \colon \pi_m(X)\to \pi_m(Y)$ restricts to homomorphisms ${h_{\ast|}} \colon G^f_m(X)\to G^{hf}_m(Y)$ and ${h_{\ast|}} \colon P^f_m(X)\to P^{hf}_m(Y)$.

In particular, for $f=\mathrm{id}_X$, we get homomorphisms $h_{\ast|}\colon G_m(X)\to G^h_m(Y)$ and $h_{*|}\colon P_m(X)\to P^h_m(Y)$.

\item\label{rem.homoteq.5} If $A=\mathbb{S}^k$ then $G_m^f(X)=P_m^f(X)=\operatorname{Ker}[f,-]$ for $m\ge 1$.

\item\label{rem.homoteq.6} If $\alpha\colon \mathbb{S}^l\to \mathbb{S}^m$ then the induced map $\alpha^\ast \colon \pi_m(X)\to \pi_l(X)$ restricts to maps
$\alpha_|^\ast \colon G_m^f(X)\to G_l^f(X)$ and $\alpha_|^\ast \colon P_m^f(X)\to P_l^f(X)$.

If the map $\alpha=E\beta$,
the suspension of $\beta$, then:
\begin{enumerate}
\item\label{rem.homoteq.6.1}  those restricted maps $\alpha^*_|$ are homomorphisms;
\item\label{rem.homoteq.6.2} $(E^{m-n}\alpha)_|^\ast\colon G_m ^\alpha(\mathbb{S}^n)\to G_{m-n+l}(\mathbb{S}^n)$ for $m\ge n$.
\end{enumerate}
\end{enumerate}
\end{remark}

\begin{proof} Because \ref{rem.homoteq.3}--\ref{rem.homoteq.6}\ref{rem.homoteq.6.1} are obvious, we show only \ref{rem.homoteq.6}\ref{rem.homoteq.6.2} applying the following property of the Whitehead product: \[[\gamma E\delta,\gamma' E\delta']=[\gamma,\gamma']E(\delta\wedge \delta')\]
for $\gamma\in\pi_s(X)$, $\gamma'\in\pi_t(X)$ and maps $\delta\colon \mathbb{S}^{r-1}\to\mathbb{S}^{s-1}$, and 
$\delta' \colon \mathbb{S}^{r'-1}\to\mathbb{S}^{t-1}$.

Now, notice that we may assume $l,m\ge n$ and write $\iota_k$ for the identity map of the sphere $\mathbb{S}^k$. Then, given $\nu\in G_m^\alpha(\mathbb{S}^n)$, we get:
\begin{align*}
0=[\nu,\alpha]&=[\nu E\iota_{m-1},\iota_n E\beta]=[\nu,\iota_n]E(\iota_{m-1}\wedge\beta)\\ &=[\nu,\iota_n]E^m\beta=[\nu,\iota_n]E(E^{m-n}\beta\wedge\iota_{n-1})=[\nu E^{m-n+1}\beta,\iota_n]=[\nu E^{m-n}\alpha,\iota_n].
\end{align*}

Consequently, \[(E^{m-n}\alpha)_|^\ast\colon G_m ^\alpha(\mathbb{S}^n)\to G_{m-n+l}(\mathbb{S}^n)\] and the proof is complete.
\end{proof} 

\begin{example} \begin{enumerate}\thmenumhspace
\item Let $f\colon \mathbb{S}^m\to \mathbb{S}^m$ and write $\deg f$ for its degree. Then, $G_m^f(\mathbb{S}^m)=P_m^f(\mathbb{S}^m)=(\deg f)G_m(\mathbb{S}^m)$ provided $\deg f\neq 0$ and $G_m^f(\mathbb{S}^m)=P_m^f(\mathbb{S}^m)=\pi_m(\mathbb{S}^m)$ otherwise. Further, recall that $G_n(\mathbb{S}^n)=0$ for $n$ even and 
\[G_n(\mathbb{S}^n)=P_n(\mathbb{S}^n)=\begin{cases}\pi_n(\mathbb{S}^n),&\text{for $n=1,3,7$};\\
2\pi_n(\mathbb{S}^n), &\text{for $n\neq 1,3,7$ odd.}
\end{cases}\]
Hence, $G_n^f(\mathbb{S}^n)=0$ for even $n$ and by the Whitehead product $2[\iota_n,\iota_n]=0$ for odd $n$ , we derive that  \[G^f_n(\mathbb{S}^n)=P_n^f(\mathbb{S}^n)=\begin{cases}\pi_n(\mathbb{S}^n),& \text{for $\deg f$ even};\\
G_n(\mathbb{S}^n),& \text{for $\deg f$ odd,}
\end{cases}\]
provided $n$ is odd.

\item Consider a finite group $H$ with a  free action $H\times\mathbb{S}^n\to \mathbb{S}^n$ for $n\ge 1$, write $\mathbb{S}^n/H$ for the associated orbit space
and $\gamma_n \colon \mathbb{S}^n\to \mathbb{S}^n/H$ for the quotient map.

Let $f\colon \mathbb{S}^k\to\mathbb{S}^n/H$ with $k\ge 1$. If $k>1$ then $f=\gamma_n f'$ for some $f' \colon \mathbb{S}^k\to \mathbb{S}^n$
and \[G_m^f(\mathbb{S}^n/H)=P_m^f(\mathbb{S}^n/H)=\begin{cases}\gamma_{n\ast} G_m^{f'}(\mathbb{S}^n), &\text{if $k>1$;}\\
\gamma_{n\ast}\pi_m(\mathbb{S}^n),& \text{if $k=1$,}
\end{cases}\] for $m>1$ and
\[G_1^f(\mathbb{S}^n/H)=P_1^f(\mathbb{S}^n/H)=\begin{cases} H, &\text{if $k>1$;}\\
\mathcal{Z}_H\big<f\big>,& \text{if $k=1$.}
\end{cases}\]
\end{enumerate}
\end{example}

Now, we show:
\begin{lemma} If $f \colon  A\to X$  then $P_1^f(X)\subseteq\mathcal{Z}_{\pi_1(X)}f_\ast(\pi_1(A))$.
\end{lemma}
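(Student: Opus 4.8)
The plan is to unwind the definition of the generalized Whitehead center group $P_1^f(X)$ and relate the vanishing of a Whitehead product with $f_*\beta$ for $\beta\in\pi_1(A)$ to a commutator condition in $\pi_1(X)$. First I would recall the standard fact that for $\alpha,\gamma\in\pi_1(X)$ the Whitehead product $[\alpha,\gamma]\in\pi_1(X)$ is precisely the commutator $\alpha\gamma\alpha^{-1}\gamma^{-1}$ (with whatever sign/convention the paper uses); this is classical and can be cited from the standard references already in play (e.g.\ Whitehead's book or \cite{toda}). So if $\alpha\in P_1^f(X)$, then by definition $[\alpha,f_*\beta]=0$ for all $\beta\in\pi_1(A)$, which translates into $\alpha\,(f_*\beta)\,\alpha^{-1}\,(f_*\beta)^{-1}=1$ in $\pi_1(X)$, i.e.\ $\alpha$ commutes with $f_*\beta$ for every $\beta\in\pi_1(A)$.

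Next I would observe that as $\beta$ ranges over all of $\pi_1(A)$, the elements $f_*\beta$ range over all of the subgroup $f_*(\pi_1(A))\le\pi_1(X)$. Hence $\alpha$ commutes with every element of $f_*(\pi_1(A))$, which is exactly the statement that $\alpha\in\mathcal{Z}_{\pi_1(X)}f_*(\pi_1(A))$, the centralizer of that subgroup. This gives the desired inclusion $P_1^f(X)\subseteq\mathcal{Z}_{\pi_1(X)}f_*(\pi_1(A))$. Since we only need one containment, we do not need the reverse argument (which would require the action of $\pi_1(X)$ on higher homotopy to be controlled, as in Remark~\ref{rem.homoteq}\ref{rem.homoteq.2}); the inclusion here holds with no extra hypotheses.

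The only genuinely delicate point is making sure the identification of the degree-one Whitehead product with the group commutator is stated with a convention consistent with the rest of the paper, and in particular that $[\alpha,f\beta]=0$ (Whitehead product of $\alpha\in\pi_1(X)$ with $f_*\beta\in\pi_1(X)$) really is the obstruction to commuting rather than something subtler. I expect this to be the main—indeed essentially the only—obstacle, and it is resolved simply by invoking the classical formula $[\alpha,\gamma]=\alpha\gamma\alpha^{-1}\gamma^{-1}$ for $\alpha,\gamma\in\pi_1(X)$. Everything else is a one-line unpacking of definitions.
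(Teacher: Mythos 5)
Your argument is exactly the paper's: identify the degree-one Whitehead product $[\alpha,f\beta]$ with the commutator of $\alpha$ and $f_\ast\beta$ in $\pi_1(X)$, so that its vanishing for all $\beta\in\pi_1(A)$ says precisely that $\alpha$ centralizes $f_\ast(\pi_1(A))$. This is correct and matches the paper's one-line proof.
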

\begin{proof}
Take $\alpha\in P_1^f(X)$. Then, the Whitehead product $[\alpha,f\beta]=0$ for all $\beta\in\pi_1(A)$.
This implies $\alpha f_\ast(\beta)=f_\ast(\beta)\alpha$ and the proof follows.
\end{proof}

Because  $G_1^f(X)\subseteq P_1^f(X)$, we derive that \[G_1^f(X)\subseteq\mathcal{Z}_{\pi_1(X)}f_\ast(\pi_1(A))\] 
which for $f=\mathrm{id}_X$ implies the result of Gottlieb \cite{gottlieb1}.

Gottlieb \cite[Corollary~I.13]{gottlieb1} has shown that $G_1(K(\pi,1))=\mathcal{Z}\pi$, the center of the group $\pi$. We generalize that result as follows: 

\begin{proposition}\label{prop.main} If   $f \colon  A\to K(\pi,1)$ then \(G_1^f(K(\pi,1))=\mathcal{Z}_\pi f_\ast(\pi_1(A))\).
\end{proposition}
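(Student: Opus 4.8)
The plan is to establish the two inclusions separately. Since $G_1^f(K(\pi,1))\subseteq P_1^f(K(\pi,1))$ and, by the Lemma just proved, $P_1^f(K(\pi,1))\subseteq \mathcal{Z}_{\pi_1(K(\pi,1))}f_\ast(\pi_1(A))=\mathcal{Z}_\pi f_\ast(\pi_1(A))$, one inclusion is immediate. It remains to prove $\mathcal{Z}_\pi f_\ast(\pi_1(A))\subseteq G_1^f(K(\pi,1))$.

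For the reverse inclusion I would invoke the standard fact that, for any space $Y$ of the homotopy type of a $CW$-complex, the assignment $[g]\mapsto g_\ast$ gives a bijection $[Y,K(\pi,1)]_\ast\xrightarrow{\ \cong\ }\operatorname{Hom}(\pi_1(Y),\pi)$; in particular, two maps into $K(\pi,1)$ are based homotopic if and only if they induce the same homomorphism on $\pi_1$. Fix $\alpha\in\mathcal{Z}_\pi f_\ast(\pi_1(A))\subseteq\pi=\pi_1(K(\pi,1))$ and choose a representing map $\alpha\colon\mathbb{S}^1\to K(\pi,1)$. By van Kampen's theorem the inclusion $j\colon A\vee\mathbb{S}^1\hookrightarrow A\times\mathbb{S}^1$ induces on fundamental groups the canonical projection $\pi_1(A)\ast\mathbb{Z}\twoheadrightarrow\pi_1(A)\times\mathbb{Z}$, and the map $f\vee\alpha\colon A\vee\mathbb{S}^1\to K(\pi,1)$ induces the homomorphism $\phi\colon\pi_1(A)\ast\mathbb{Z}\to\pi$ determined by $\phi|_{\pi_1(A)}=f_\ast$ and $\phi(t)=\alpha$, where $t$ generates the free factor $\mathbb{Z}=\pi_1(\mathbb{S}^1)$.

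Since $\alpha$ commutes with every element of $f_\ast(\pi_1(A))$, the homomorphism $\phi$ annihilates each commutator $[a,t]$ with $a\in\pi_1(A)$, hence the whole normal closure of these commutators, which is precisely the kernel of $\pi_1(A)\ast\mathbb{Z}\twoheadrightarrow\pi_1(A)\times\mathbb{Z}$. Therefore $\phi=\psi\circ j_\ast$ for a unique homomorphism $\psi\colon\pi_1(A)\times\mathbb{Z}\to\pi$. Realizing $\psi$ by a map $F\colon A\times\mathbb{S}^1\to K(\pi,1)$ with $F_\ast=\psi$ (possible because $A\times\mathbb{S}^1$ has $CW$ homotopy type and maps into $K(\pi,1)$ realize all $\pi_1$-homomorphisms), the restriction $F\circ j=F|_{A\vee\mathbb{S}^1}$ induces $\psi\circ j_\ast=\phi$ on $\pi_1$, hence is based homotopic to $f\vee\alpha$. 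Thus $F$ extends $f\vee\alpha$ up to homotopy, so $\alpha\in G_1^f(K(\pi,1))$, and the proof is complete.

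I do not expect a serious obstacle: the argument is the homotopy-theoretic translation, through the equivalence between maps into $K(\pi,1)$ and $\pi_1$-homomorphisms, of the elementary group-theoretic fact that a homomorphism out of a free product factors through the direct product exactly when the images of the two factors commute. The only points that need mild care are the $CW$ hypotheses on $A$ (so that $A\vee\mathbb{S}^1$ and $A\times\mathbb{S}^1$ have $CW$ homotopy type and the classification of maps into $K(\pi,1)$ applies) and the bookkeeping when passing between the homomorphisms $\phi,\psi$ and honest (based) maps.
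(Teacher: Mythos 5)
Your proof is correct and follows essentially the same route as the paper: both obtain the easy inclusion from $G_1^f\subseteq P_1^f\subseteq\mathcal{Z}_\pi f_\ast(\pi_1(A))$, and both prove the reverse inclusion by observing that centrality of $\alpha$ makes $(g,n)\mapsto\alpha^nf_\ast(g)$ a well-defined homomorphism $\pi_1(A)\times\mathbb{Z}\to\pi$, which is then realized by a map $A\times\mathbb{S}^1\to K(\pi,1)$ restricting (up to based homotopy) to $f\vee\alpha$ via the classification of maps into aspherical spaces. The only cosmetic difference is that the paper additionally sketches an obstruction-theoretic variant ($2$-extensibility plus vanishing of higher homotopy of $K(\pi,1)$), while you spell out the identification $[Y,K(\pi,1)]_\ast\cong\operatorname{Hom}(\pi_1(Y),\pi)$ more explicitly.
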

\begin{proof}
By the above we have $G_1^f(K(\pi,1))\subseteq\mathcal{Z}_\pi f_\ast(\pi_1(A))$.

To show the opposite inclusion, take $\alpha\in \mathcal{Z}_\pi f_\ast(\pi_1(A))$ and 
consider the homomorphism $\varphi \colon  \pi_1(A\times \mathbb{S}^1)=\pi_1(A)\times\mathbb{Z}\to \pi$ given by $\varphi(g,n)=\alpha^nf_\ast(g)$ for
$(g,n)\in  \pi_1(A)\times\mathbb{Z}$, where $\mathbb{Z}$ is the group of integers.
Then, applying the result \cite[(4.3)~Theorem]{wh}, we get the required map $A\times \mathbb{S}^1\to K(\pi,1)$.

Nevertheless, we decide to sketch a direct proof. The map $f\vee \alpha \colon  A\vee \mathbb{S}^1\to K(\pi,1)$ and the inclusion map $i \colon   A\vee \mathbb{S}^1\hookrightarrow A\times \mathbb{S}^1$
induce homomorphisms $(f\vee \alpha)_\ast \colon  \pi_1(A)\ast\mathbb{Z}\to \pi$ and $i_\ast \colon   \pi_1(A)\ast\mathbb{Z}\to \pi_1(A)\times \mathbb{Z}$ with $\varphi i_\ast=(f\vee \alpha)_\ast$. 

Hence, by \cite[Remark, p.\ 846]{gottlieb1} the map
$f\vee \alpha \colon  A\vee \mathbb{S}^1\to K(\pi,1)$ is $2$-extensible. Because $\pi_m(K(\pi,1))=0$ for $m>1$, we derive an extension
$A\times \mathbb{S}^1\to K(\pi,1)$ of the map $f\vee \alpha $ and the proof is complete.
\end{proof}

We point out that the inclusion $\mathcal{Z}_\pi f_\ast(\pi)\subseteq G_1^f(K(\pi,1))$ for any self-map  $f\colon K(\pi,1) \to K(\pi,1)$  was already obtained in \cite[Chapter~VII, Theorem~10]{brownr}.

\begin{example}\begin{enumerate}\thmenumhspace
\item If $\pi$ is an abelian group then $G_1(K(\pi,1))=G_1^f(K(\pi,1))=\pi$ for any $f\colon A\to K(\pi,1)$.

\item Let $Q_8=\langle i,j \rangle $ be the quaternionic group. Because the center $\mathcal{Z}(Q_8)=\mathbb{Z}_2$ we derive that $G_1(K(Q_8,1))=\mathbb{Z}_2$. Let $f\colon \mathbb{S}^1\to K(Q_8,1)$ be the map determined by the homomorphism $\mathbb{Z}\to Q_8$ such that $1\mapsto i$. Then, $G_1^f(K(Q_8,1))=\mathcal{Z}_{Q_8}\langle i \rangle=\langle i \rangle$ and so we have the proper inclusion $G_1(K(Q_8,1))\subsetneqq G_1^f(K(Q_8,1))$.
\end{enumerate}
\end{example}

Recall that a space $X$ is said to be \textit{aspherical} if $\pi_m(X,x)=0$ for $m>1$ and all $x\in X$. Following
the ideas stated in \cite[Section~III, $X^X$]{gottlieb1}, we can easily generalize \cite[Theorem~III.2]{gottlieb1}
and \cite[Theorems~6.1 and 6.2]{hu} as follows: 
\begin{proposition}\label{aspherical} If $X$ is a locally finite, aspherical, pathwise connected space and $f \colon A\to X$ then:
\begin{enumerate}
\item $\pi_1(X^A,f)=G_1^f(X)=\mathcal{Z}_{\pi_1(X)}f_\ast(\pi_1(A))$;

\item $\pi_m(X^A,f)=0$ for  $m>1$.
\end{enumerate}
\end{proposition}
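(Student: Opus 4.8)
The plan is to analyze the fibration induced by evaluation and exploit asphericity of $X$. First I would recall the standard evaluation fibration $X^A_f \to X$ with fiber $F^A_f$, the space of maps $A\to X$ that are nullhomotopic relative to the base-point constraint — more precisely, the fiber over $x_0$ is the space $F(A,X;f)$ of maps homotopic to $f$ through maps sending $a_0$ to $x_0$ after composing with a path, but the cleaner route is to use the known fact (Gottlieb \cite{gottlieb1}, Hu \cite{hu}) that $\operatorname{ev}\colon X^A \to X$ is a fibration, and to identify the homotopy type of the component $X^A_f$ containing $f$.

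The key computational input is that when $X$ is aspherical, $X\simeq K(\pi,1)$ with $\pi=\pi_1(X)$, and the based mapping space $\operatorname{map}_*(A, K(\pi,1))$ has contractible components (since $[A, K(\pi,1)]$ classifies homomorphisms up to conjugacy and higher homotopy of the mapping space vanishes when the target is aspherical — concretely $\pi_m(\operatorname{map}_*(A,K(\pi,1)), f) \cong H^{-m}$-type obstruction groups that vanish for $m>1$, and $\pi_1$ is a quotient related to the centralizer). Then the free mapping space component $X^A_f$ fits into the fibration $\operatorname{map}_*(A,X)_f \to X^A_f \xrightarrow{\operatorname{ev}} X$. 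From the long exact sequence, since $\pi_m(X)=0$ and $\pi_m(\operatorname{map}_*(A,X)_f)=0$ for $m>1$, we get $\pi_m(X^A_f)=0$ for $m>1$, proving (2). For (1), the relevant piece of the exact sequence reads
\[
\pi_1(\operatorname{map}_*(A,X)_f) \to \pi_1(X^A_f) \xrightarrow{\operatorname{ev}_*} \pi_1(X) \to \pi_0(\operatorname{map}_*(A,X)_f),
\]
and one identifies $\operatorname{Im}(\operatorname{ev}_*)=G_1^f(X)$ by definition, so $G_1^f(X)=\operatorname{Ker}(\pi_1(X)\to \pi_0(\operatorname{map}_*(A,X)_f))$; a direct check (or invoking Proposition~\ref{prop.main}, since $X\simeq K(\pi,1)$) shows this kernel is exactly $\mathcal{Z}_{\pi_1(X)}f_*(\pi_1(A))$. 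Finally one argues $\pi_1(\operatorname{map}_*(A,X)_f)$ injects trivially, i.e. the map $\pi_1(X^A_f)\to \pi_1(X)$ is injective, which combined with surjectivity onto $G_1^f(X)$ gives $\pi_1(X^A,f)=G_1^f(X)$.

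Concretely, I would organize the proof as: (a) cite that $\operatorname{ev}$ is a fibration and set up the two fibration sequences; (b) show $\operatorname{map}_*(A,X)_f$ is aspherical with $\pi_1$ computed via obstruction theory over the skeleta of $A$ — here the local finiteness hypothesis on $X$ is what guarantees the mapping spaces have the homotopy type of $CW$-complexes and that the obstruction-theoretic arguments of \cite[Theorems~6.1 and 6.2]{hu} apply; (c) run the long exact homotopy sequence to kill $\pi_m$ for $m>1$ and to pin down $\pi_1$; (d) invoke Proposition~\ref{prop.main} (or Remark~\ref{rem.homoteq}\ref{rem.homoteq.2}, noting that an aspherical space has $\pi_1$ acting trivially on the vanishing higher groups) to identify $G_1^f(X)$ with the centralizer.

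The main obstacle I expect is step (b): carefully showing that the component $\operatorname{map}_*(A,X)_f$ is aspherical and computing its fundamental group. This requires knowing that $\pi_m(\operatorname{map}_*(A,K(\pi,1)), f)$ vanishes for $m>1$ — which follows from the fact that $\operatorname{map}_*(A, K(\pi,1))$ has the homotopy type dictated purely by $H^1(A;\text{local system})$-data, so all higher homotopy vanishes — and that $\pi_1$ of this component, under evaluation, has image precisely the centralizer. Rather than redo this from scratch, the cleanest path is to note that $A$ is homotopy equivalent to a $CW$-complex and filter by skeleta, using that obstructions to extending and to homotoping maps into an aspherical space live in $H^{k+1}(A;\pi_k(X))=0$ for $k>1$; the local finiteness of $X$ ensures the function spaces are well-behaved ($CW$), so the exact sequence manipulations are legitimate. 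This is exactly the generalization of \cite[Theorem~III.2]{gottlieb1} advertised, so I would lean on that theorem's proof structure, merely replacing $X^X$ by $X^A$ and $\operatorname{id}_X$ by $f$ throughout.
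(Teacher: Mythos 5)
Your proposal is correct and follows essentially the same route the paper intends: the paper gives no explicit proof, merely stating that the proposition follows by generalizing Gottlieb's \cite[Theorem~III.2]{gottlieb1} and Hu's \cite[Theorems~6.1 and 6.2]{hu} along the lines of \cite[Section~III]{gottlieb1}, which is exactly the evaluation-fibration plus asphericity/obstruction-theory argument you outline (with Proposition~\ref{prop.main} identifying $G_1^f(X)$ with the centralizer). Your sketch fills in the details the paper leaves to the reader, and the key step --- weak contractibility of the components of the based mapping space into an aspherical target, hence injectivity of $\mathrm{ev}_*$ on $\pi_1$ and vanishing of $\pi_m$ for $m>1$ --- is sound.
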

In particular, if $A$ is a $1$-connected space then $X^A$ is  pathwise connected and  $\pi_1(X^A,f)=G_1^f(X)=\mathcal{Z}\pi_1(X)$ for any $f \colon A\to X$.

\medskip

Further, Gottlieb \cite[Theorems~6-1 and  6-2]{gottlieb} has shown:
\begin{proposition}\label{l1} If $p \colon  \tilde{X}\to X$ is a covering map then
\[ p_*^{-1}(G_m(X))\subseteq G_m(\tilde{X}) \text{ for $m\geq 1$.}\] 
\end{proposition}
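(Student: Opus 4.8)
The plan is to use the fundamental fact that for a covering map $p\colon\tilde X\to X$, the induced map $p_*\colon\pi_m(\tilde X)\to\pi_m(X)$ is an isomorphism for $m>1$ and a monomorphism for $m=1$; so "$p_*^{-1}(G_m(X))$" is shorthand for the subgroup of $\pi_m(\tilde X)$ consisting of those $\tilde\alpha$ with $p_*\tilde\alpha\in G_m(X)$. Thus I must take such a $\tilde\alpha\colon\mathbb S^m\to\tilde X$, know that $\alpha:=p\tilde\alpha$ admits an associated map $F\colon X\times\mathbb S^m\to X$ extending $\mathrm{id}_X\vee\alpha$, and produce from $F$ an analogous affiliated map $\tilde F\colon\tilde X\times\mathbb S^m\to\tilde X$ for $\tilde\alpha$, i.e.\ one restricting (up to homotopy) to $\mathrm{id}_{\tilde X}\vee\tilde\alpha$ on $\tilde X\vee\mathbb S^m$.

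The key steps, in order, are as follows. First I would form the composite $G\colon\tilde X\times\mathbb S^m\xrightarrow{p\times\mathrm{id}}X\times\mathbb S^m\xrightarrow{F}X$; restricted to $\tilde X\vee\mathbb S^m$ this is homotopic to $(p\cdot\mathrm{id}_{\tilde X})\vee(p\cdot\tilde\alpha)=\,p\vee\alpha$, where by abuse I mean it agrees with $p$ on $\tilde X$ and with $\alpha$ on $\mathbb S^m$. Second, I want to lift $G$ through $p$. By the lifting criterion for covering maps, a lift $\tilde G\colon\tilde X\times\mathbb S^m\to\tilde X$ with a prescribed value at the basepoint exists precisely when $G_*(\pi_1(\tilde X\times\mathbb S^m))\subseteq p_*(\pi_1(\tilde X))$. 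Since $m>1$ — and for $m=1$ one argues separately, but note the statement is claimed for $m\ge1$ so I should check this — we have $\pi_1(\tilde X\times\mathbb S^m)=\pi_1(\tilde X)$ for $m>1$ (for $m=1$ it is $\pi_1(\tilde X)\times\mathbb Z$, which needs a small extra argument using that $G$ on the $\mathbb S^1$-factor equals $\alpha$ and that the whole of $\alpha$ lifts because $\tilde\alpha$ is a lift). On $\pi_1(\tilde X)$, $G_*$ factors as $p_*$ (since $G|_{\tilde X}=p$), so the image lands in $p_*(\pi_1(\tilde X))$ and the lift $\tilde G$ exists. Third, I would check that $\tilde G$ restricted to $\tilde X\vee\mathbb S^m$ is homotopic to $\mathrm{id}_{\tilde X}\vee\tilde\alpha$: on the $\tilde X$ summand, $\tilde G|_{\tilde X}$ is a lift of $G|_{\tilde X}=p=p\circ\mathrm{id}_{\tilde X}$ agreeing with $\mathrm{id}_{\tilde X}$ at the basepoint, hence equals $\mathrm{id}_{\tilde X}$ by uniqueness of lifts; on the $\mathbb S^m$ summand, $\tilde G|_{\mathbb S^m}$ is a lift of $G|_{\mathbb S^m}\simeq\alpha=p\tilde\alpha$ with the right basepoint, hence homotopic (through lifts, using the homotopy lifting property) to $\tilde\alpha$. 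Adjusting $\tilde G$ by this homotopy on a collar gives the desired associated map, so $\tilde\alpha\in G_m(\tilde X)$.

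The main obstacle I expect is the bookkeeping at $m=1$, where $\pi_1(\tilde X\times\mathbb S^1)$ is genuinely larger than $\pi_1(\tilde X)$: one must verify that $G_*$ sends the extra $\mathbb Z$-generator into $p_*(\pi_1(\tilde X))$, which follows because that generator maps under $G$ to the class of $\alpha$ in $\pi_1(X)$, and $\alpha=p\tilde\alpha$ lies in $p_*(\pi_1(\tilde X))$ by hypothesis — so the lift still exists. A secondary technical point is the standard one of replacing a homotopy on $\tilde X\vee\mathbb S^m$ by an actual adjustment of the extension $\tilde G$, which is routine using the cofibration $\tilde X\vee\mathbb S^m\hookrightarrow\tilde X\times\mathbb S^m$ and the homotopy extension property; I would not belabor this. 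Everything else is a direct application of covering space theory (lifting criterion, uniqueness of lifts, homotopy lifting property) combined with the definition of the Gottlieb group.
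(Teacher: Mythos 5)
Your proof is correct: the paper itself only cites Gottlieb for this statement, but your argument — compose the affiliated map $F$ with $p\times\mathrm{id}_{\mathbb{S}^m}$, verify the lifting criterion $G_*(\pi_1(\tilde X\times\mathbb{S}^m))\subseteq p_*(\pi_1(\tilde X))$ (with the separate check of the extra $\mathbb{Z}$-factor when $m=1$), lift, and identify the restriction to $\tilde X\vee\mathbb{S}^m$ by uniqueness of lifts — is exactly the lifting argument the paper uses to prove its generalization in Lemma~\ref{lemma7} and Theorem~\ref{PP}. Your handling of the two technical points (the $m=1$ case and upgrading "homotopic on the wedge" to "equal on the wedge" via the cofibration $\tilde X\vee\mathbb{S}^m\hookrightarrow\tilde X\times\mathbb{S}^m$) is appropriate, so nothing further is needed.
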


For Whitehead center groups, we get:
\begin{proposition}\label{L} If $p \colon  \tilde{X}\to X$ is a covering map then
\[ p_*^{-1}(P_m(X))\subseteq P_m(\tilde{X}) \text{ for $m\geq 1$.}\] Further, if $X$ is a simple space then \[ p_*^{-1}(P_m(X))= P_m(\tilde{X}) \text{ for $m\geq 1$.}\]
\end{proposition}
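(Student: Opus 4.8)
The plan is to mimic the proof of Proposition~\ref{l1}, translating Gottlieb's covering-space argument for Gottlieb groups into the language of Whitehead products. For the first inclusion, let $\tilde\alpha\in\pi_m(\tilde X)$ with $p_\ast\tilde\alpha=\alpha\in P_m(X)$. I must show $[\tilde\alpha,\tilde\beta]=0$ for every $\tilde\beta\in\pi_l(\tilde X)$, $l\geq 1$. If $l>1$ (or $l=1$ and the base-point loop lifts trivially), then $p_\ast$ is an isomorphism in that degree only when $l>1$, so the cleanest route is: Whitehead products are natural, hence $p_\ast[\tilde\alpha,\tilde\beta]=[\alpha,p_\ast\tilde\beta]=0$ because $\alpha\in P_m(X)$. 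For $m,l>1$ the map $p_\ast\colon\pi_{m+l-1}(\tilde X)\to\pi_{m+l-1}(X)$ is an isomorphism, so $[\tilde\alpha,\tilde\beta]=0$. The only degree needing separate attention is $l=1$: here $[\tilde\alpha,\tilde\beta]$ for $\tilde\beta\in\pi_1(\tilde X)$ is the difference $\tilde\beta_\#(\tilde\alpha)-\tilde\alpha$ coming from the $\pi_1$-action on $\pi_m$, and $p_\ast$ is injective on $\pi_m$ (it is an isomorphism for $m>1$ and injective for $m=1$ onto the subgroup $p_\ast\pi_1(\tilde X)$), combined with the fact that $\tilde\beta$ and $p_\ast\tilde\beta$ act compatibly through $p$. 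So naturality plus injectivity of $p_\ast$ on the relevant homotopy groups gives $p_\ast^{-1}(P_m(X))\subseteq P_m(\tilde X)$.

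For the reverse inclusion under the simplicity hypothesis, I would argue as follows. Let $\tilde\alpha\in P_m(\tilde X)$; I want $p_\ast\tilde\alpha\in P_m(X)$, i.e.\ $[p_\ast\tilde\alpha,\gamma]=0$ for all $\gamma\in\pi_l(X)$, $l\geq 1$. When $l>1$, lift $\gamma$ to $\tilde\gamma\in\pi_l(\tilde X)$ (possible since $p_\ast$ is onto in degrees $>1$), so $[p_\ast\tilde\alpha,\gamma]=[p_\ast\tilde\alpha,p_\ast\tilde\gamma]=p_\ast[\tilde\alpha,\tilde\gamma]=0$. The genuinely new case is $l=1$: an element $\gamma\in\pi_1(X)$ need not lift to $\pi_1(\tilde X)$, and the Whitehead product $[p_\ast\tilde\alpha,\gamma]$ measures the failure of $\gamma$ to act trivially on $p_\ast\tilde\alpha\in\pi_m(X)$. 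Here the simplicity of $X$ enters decisively: $\pi_1(X)$ acts trivially on $\pi_m(X)$, so $[\beta,\gamma]=0$ for \emph{every} $\beta\in\pi_m(X)$ and every $\gamma\in\pi_1(X)$. Thus $[p_\ast\tilde\alpha,\gamma]=0$ automatically in the $l=1$ case, and combined with the $l>1$ case we get $p_\ast\tilde\alpha\in P_m(X)$, hence $p_\ast^{-1}(P_m(X))=P_m(\tilde X)$.

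I expect the main obstacle to be the careful bookkeeping in the $l=1$ degree for the first inclusion: one must be precise about how the $\pi_1$-action on $\pi_m$ behaves under a covering map (the actions are intertwined by $p$ but $\pi_1(\tilde X)\to\pi_1(X)$ is only injective, not surjective) and invoke the standard identification of the Whitehead product $[\alpha,\beta]$ for $\beta\in\pi_1$ with the twisted-action difference $\beta_\#\alpha-\alpha$ (this requires $m\geq 2$; for $m=1$ it is the commutator). A remark handling $m=1$ separately — where $P_1(X)=\mathcal Z_{\pi_1(X)}\pi_1(X)=\mathcal Z\pi_1(X)$ and the claim reduces to elementary group theory about centers under the injection $p_\ast\colon\pi_1(\tilde X)\hookrightarrow\pi_1(X)$ — will keep the argument honest. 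Everything else is a routine application of the naturality of Whitehead products together with the covering-space isomorphism $p_\ast\colon\pi_m(\tilde X)\xrightarrow{\cong}\pi_m(X)$ for $m>1$.
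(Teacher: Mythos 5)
Your proposal is correct and follows essentially the same route as the paper: the first inclusion is naturality of Whitehead products plus injectivity of $p_\ast$ on $\pi_{m+l-1}(\tilde X)$, and the reverse inclusion under simplicity splits into the $l>1$ case (lift $\gamma$ through the isomorphism $p_\ast$) and the $l=1$ case (killed directly by simplicity of $X$). Your extra caution about the $l=1$ case in the first inclusion is unnecessary — naturality $p_\ast[\tilde\alpha,\tilde\beta]=[p_\ast\tilde\alpha,p_\ast\tilde\beta]$ and injectivity of $p_\ast$ on the target group already cover it uniformly — but nothing is wrong.
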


\begin{proof}
Since $p_*\colon\pi_m(\tilde{X})\to \pi_m(X)$ is a monomorphism, the inclusion $p_*^{-1}(P_m(X))\subseteq P_m(\tilde{X})$ for $m\geq 1$ is straightforward. 

Let now $X$ be a simple space and take $\alpha\in P_m(\tilde{X})$, and $\beta\in \pi_k(X)$. If $k=1$ then $[p\alpha,\beta]=0$ since $X$ is a simple space. If $k>1$ then there is $\gamma\in \pi_k(\tilde{X})$ such that $p\gamma=\beta$. Hence, $[p\alpha,\beta]=[p\alpha,p\gamma]=p_*[\alpha,\gamma]=0$ and the proof follows.
\end{proof}

To state next result, we prove:
\begin{lemma}\label{lemma7}If $p\colon \tilde{X}\to X$ is a covering map and $f\colon A\to \tilde{X}$ then:
\begin{enumerate}
\item $G_1^f(\tilde{X})= p_*^{-1}(G_1^{pf}(X))$;
\item $P_1^f(\tilde{X})= p_*^{-1}(P_1^{pf}(X))$.
\end{enumerate}
\end{lemma}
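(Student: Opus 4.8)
The plan is to prove both statements simultaneously by exploiting the fact that a covering map $p\colon\tilde X\to X$ induces a monomorphism $p_\ast\colon\pi_m(\tilde X)\to\pi_m(X)$ for $m\ge 1$ (it is an isomorphism for $m>1$ and injective on $\pi_1$), so that the subgroups $G_1^{pf}(X)$ and $P_1^{pf}(X)$ of $\pi_1(X)$ pull back cleanly along $p_\ast$. The inclusions $G_1^f(\tilde X)\subseteq p_\ast^{-1}(G_1^{pf}(X))$ and $P_1^f(\tilde X)\subseteq p_\ast^{-1}(P_1^{pf}(X))$ are immediate from Remark~\ref{rem.homoteq}\ref{rem.homoteq.4} applied to $h=p$: any $\alpha$ represented by an extendable map (respectively, with vanishing Whitehead products $[\alpha,f\beta]$) maps under $p_\ast$ to an element with the same property relative to $pf$. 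So the content is in the reverse inclusions.

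For the $P_1$ statement, I would argue directly: if $\alpha\in\pi_1(\tilde X)$ satisfies $p_\ast\alpha\in P_1^{pf}(X)$, then for every $\beta\in\pi_l(A)$ with $l\ge 1$ we have $[p_\ast\alpha,(pf)_\ast\beta]=0$ in $\pi_\ast(X)$. Since $(pf)_\ast\beta=p_\ast(f_\ast\beta)$ and the Whitehead product is natural, this reads $p_\ast[\alpha,f_\ast\beta]=0$, and since $p_\ast$ is a monomorphism on the relevant homotopy group, $[\alpha,f_\ast\beta]=0$; hence $\alpha\in P_1^f(\tilde X)$. For the $G_1$ statement, the cleanest route is to invoke the two earlier characterizations already available in the excerpt together with the homotopy lifting property. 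On the covering-space side, Proposition~\ref{aspherical}-type reasoning is not directly applicable since $\tilde X$ need not be aspherical, so instead I would use Remark~\ref{rem.homoteq}\ref{rem.homoteq.2}: since $G_1^f(\tilde X)\subseteq P_1^f(\tilde X)$ and $G_1^{pf}(X)\subseteq P_1^{pf}(X)$, and since we will have established $P_1^f(\tilde X)=p_\ast^{-1}(P_1^{pf}(X))$, it suffices to show that if $\alpha\in\pi_1(\tilde X)$ has $p_\ast\alpha\in G_1^{pf}(X)$ then $\alpha\in G_1^f(\tilde X)$.

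The heart of the matter, and the step I expect to be the main obstacle, is this lifting argument for $G_1$: given that $pf\vee p\alpha\colon A\vee\mathbb S^1\to X$ extends to some $F\colon A\times\mathbb S^1\to X$, I must produce an extension $\tilde F\colon A\times\mathbb S^1\to\tilde X$ of $f\vee\alpha$. The map $F$ lifts to $\tilde X$ through $p$ precisely when $F_\ast(\pi_1(A\times\mathbb S^1))\subseteq p_\ast(\pi_1(\tilde X))$; here $\pi_1(A\times\mathbb S^1)=\pi_1(A)\times\mathbb Z$, and on the $\pi_1(A)$ factor $F$ restricts to $pf$, whose image lies in $p_\ast(\pi_1(\tilde X))$ because $pf=p\circ f$; on the $\mathbb Z$ factor $F$ restricts to $p\alpha=p\circ\alpha$, whose image is generated by $p_\ast\alpha\in p_\ast(\pi_1(\tilde X))$. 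Hence the subgroup criterion for lifting is met, and a lift $\tilde F\colon A\times\mathbb S^1\to\tilde X$ exists; choosing the base-point lift correctly (so that $\tilde F$ agrees with $f$ at $(a_0,*)$) and using unique path lifting, one checks $\tilde F$ restricts to $f$ on $A\times\{*\}$ and to a representative of $\alpha$ on $\{a_0\}\times\mathbb S^1$ — possibly after composing with a based homotopy, which is harmless. Therefore $f\vee\alpha$ extends over $A\times\mathbb S^1$, so $\alpha\in G_1^f(\tilde X)$, completing the reverse inclusion and hence both claimed equalities.
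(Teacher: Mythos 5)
Your proposal is correct and follows essentially the same route as the paper: the forward inclusions come from functoriality of $p_\ast$, the reverse inclusion for $P_1$ from naturality of the Whitehead product together with injectivity of $p_\ast$ on all homotopy groups of a covering, and the reverse inclusion for $G_1$ from verifying the lifting criterion $F_\ast(\pi_1(A\times\mathbb{S}^1))\subseteq p_\ast(\pi_1(\tilde X))$ and then identifying the lift on $A\vee\mathbb{S}^1$ by uniqueness of lifts. The brief detour through Remark~\ref{rem.homoteq}\ref{rem.homoteq.2} is unnecessary but harmless, since you then carry out the lifting argument directly.
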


\begin{proof}Because $p_*\colon \pi_1(\tilde{X})\to \pi_1(X)$ restricts to $p_{*|}\colon G_1^f(\tilde{X})\to G_1^{pf}(X)$ and  $p_{*|}\colon P_1^f(\tilde{X})\to P_1^{pf}(X)$, we deduce that $G_1^f(\tilde{X})\subseteq p_*^{-1}(G_1^{pf}(X))$ and $P_1^f(\tilde{X})\subseteq p_*^{-1}(P_1^{pf}(X))$.

First, we show $p_*^{-1}(G_1^{pf}(X))\subseteq G_1^f(\tilde{X})$. Given $\alpha\in p_*^{-1}(G_1^{pf}(X)) $, we get a map $F\colon A\times \mathbb{S}^1\to X$ such that the diagram 
\[ \xymatrix@C=2cm{A\vee \mathbb{S}^1\ar@{^{(}->}[d] \ar[r]^-{pf\vee p\alpha} & X \\ 
A\times \mathbb{S}^1 \ar[ru]_F &} \] commutes
up to homotopy. But, for any $\beta\colon \mathbb{S}^1\to A\times \mathbb{S}^1$ there is a map $\beta'\colon \mathbb{S}^1 \to A\vee \mathbb{S}^1$ homotopic to $\beta$. Then, $F\beta= F\beta' = p(f\vee \alpha)\beta' $ and  consequently $F_*(\pi_1(A\times \mathbb{S}^1))\subseteq p_*(\pi_1(\tilde{X}))$. Hence, there is a lifting  $\tilde{F}\colon A\times \mathbb{S}^1 \to \tilde{X}$  of the map $F$. Because $p\tilde{F}_{|A\vee\mathbb{S}^1}=F_{|A\vee\mathbb{S}^1}=p(f\vee\alpha)$, we derive that $\tilde{F}_{|A\vee\mathbb{S}^1}=f\vee\alpha$ and so $\alpha\in G_1^f(\tilde{X})$.

Now, we show $p_*^{-1}(P_1^{pf}(X))\subseteq P_1^f(\tilde{X})$. Given $\alpha\in p_*^{-1}(P_1^{pf}(X)) $, we get  $p\alpha\in P_1^f(X)$. Then, for any $\gamma\in \pi_m(A)$ with $m\geq 1$, it holds $p_*[\alpha,f\gamma]=[p\alpha,pf\gamma]=0$. Because $p_*$ is a monomorphism, $[\alpha,f\gamma]=0$ and so $\alpha\in P_1^f(\tilde{X})$ and the proof follows.
\end{proof}

\begin{theorem}\label{PP} If  $p \colon  \tilde{X}\to X$  is a  covering map  and  $f \colon  A\to \tilde{X}$ then the monomorphism $p_\ast \colon  \pi_m(\tilde{X})\to \pi_m(X)$ for $m\geq  1$ yields: 
\begin{enumerate}
\item $G_m^f(\tilde{X})= p_*^{-1}(G_m^{pf}(X))$;
\item $P_m^f(\tilde{X})= p_*^{-1}(P_m^{pf}(X))$.
\end{enumerate}
%\[p_{\ast|} \colon  G_m^f(\tilde{X})\to  G_m^{pf}(X) \text{ and } p_{\ast|} \colon  P_m^f(\tilde{X})\to  P_m^{pf}(X)\] for $m>1$.
\end{theorem}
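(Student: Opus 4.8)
The plan is to bootstrap from Lemma~\ref{lemma7} together with the lifting criterion for covering maps; note that for $m>1$ the map $p_\ast$ is in fact an isomorphism, so both assertions just identify certain subgroups under $p_\ast$. One inclusion in each case is essentially free: by Remark~\ref{rem.homoteq}\ref{rem.homoteq.4} the homomorphism $p_\ast$ restricts to $p_{\ast|}\colon G_m^f(\tilde{X})\to G_m^{pf}(X)$ and $p_{\ast|}\colon P_m^f(\tilde{X})\to P_m^{pf}(X)$, whence $G_m^f(\tilde{X})\subseteq p_\ast^{-1}(G_m^{pf}(X))$ and $P_m^f(\tilde{X})\subseteq p_\ast^{-1}(P_m^{pf}(X))$. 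The case $m=1$ of the reverse inclusions is exactly Lemma~\ref{lemma7}, so it remains to treat $m>1$.

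For the Whitehead center groups the argument of Lemma~\ref{lemma7} goes through verbatim: if $\alpha\in p_\ast^{-1}(P_m^{pf}(X))$ then for every $\gamma\in\pi_l(A)$ with $l\ge 1$ one has $p_\ast[\alpha,f\gamma]=[p\alpha,(pf)\gamma]=0$, and since $p_\ast$ is a monomorphism $[\alpha,f\gamma]=0$; hence $\alpha\in P_m^f(\tilde{X})$.

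For the generalized Gottlieb groups with $m>1$, take $\alpha\in p_\ast^{-1}(G_m^{pf}(X))$ and choose $F\colon A\times\mathbb{S}^m\to X$ extending $(pf)\vee(p\alpha)$; since $A\vee\mathbb{S}^m\hookrightarrow A\times\mathbb{S}^m$ is a cofibration we may assume $F$ is a strict extension on the wedge. Because $m>1$, the inclusion $A=A\times\{\ast\}\hookrightarrow A\times\mathbb{S}^m$ induces an isomorphism on fundamental groups, so $F_\ast(\pi_1(A\times\mathbb{S}^m))=(pf)_\ast(\pi_1(A))=p_\ast f_\ast(\pi_1(A))\subseteq p_\ast(\pi_1(\tilde{X}))$; as $A\times\mathbb{S}^m$ is path-connected and locally path-connected, the lifting criterion yields $\tilde{F}\colon A\times\mathbb{S}^m\to\tilde{X}$ with $p\tilde{F}=F$, and we pick the lift carrying the base-point to that of $\tilde{X}$. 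Then $\tilde{F}|_A$ and $f$ are both lifts of $pf$ along $p$ agreeing at the base-point, hence equal by uniqueness of lifts, and similarly $\tilde{F}|_{\mathbb{S}^m}=\alpha$; thus $\tilde{F}|_{A\vee\mathbb{S}^m}=f\vee\alpha$ and $\alpha\in G_m^f(\tilde{X})$, completing the proof.

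The argument is largely bookkeeping; the only point requiring attention—the same one implicit in Lemma~\ref{lemma7}—is to first replace $F$ by a genuine extension of $(pf)\vee(p\alpha)$, so that after lifting the restriction $\tilde{F}|_{A\vee\mathbb{S}^m}$ can be identified piecewise with $f\vee\alpha$ through unique lifting. No delicate obstruction appears here precisely because $\pi_1(A\times\mathbb{S}^m)=\pi_1(A)$ for $m>1$, which makes the hypothesis of the lifting criterion automatic.
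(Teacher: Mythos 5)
Your proposal is correct and follows essentially the same route as the paper: reduce to $m>1$ via Lemma~\ref{lemma7}, use injectivity of $p_\ast$ for the Whitehead part, and for the Gottlieb part lift the extension $F$ through $p$ using $\pi_1(A\times\mathbb{S}^m)\cong\pi_1(A)$ and then identify $\tilde F|_{A\vee\mathbb{S}^m}$ with $f\vee\alpha$ by unique lifting. The only differences are cosmetic (you justify $\pi_1(A\times\mathbb{S}^m)=\pi_1(A)$ directly rather than via the $2$-skeleton, and you spell out the basepoint/unique-lifting step that the paper leaves implicit).
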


\begin{proof}In view of Lemma~\ref{lemma7}, we may assume  $m>1$.
Because $p_\ast \colon  \pi_m(\tilde{X})\to\pi_m(X)$ is an isomorphism, we derive  that its restrictions \[ p_{\ast|} \colon  G_m^f(\tilde{X})\to  G_m^{pf}(X) \text{ and } p_{\ast|} \colon P_m^f(\tilde{X})\to P_m^{pf}(X)\] are monomorphisms.

First, we prove that $p_{\ast|} \colon  G_m^f(\tilde{X})\to G_m^{pf}(X)$ is surjective for $m>1$.
Given $\alpha\in  G_m^{pf}(X)$, there are $\beta\in\pi_m(\tilde{X})$ such that $p\beta=\alpha$ and $F \colon  A\times \mathbb{S}^m\to X$ extending $pf\vee\alpha \colon  A\vee\mathbb{S}^m\to X$. 
But the $2$-skeleton $(A\times \mathbb{S}^m)^{(2)}=A^{(2)}\vee (\mathbb{S}^m)^{(2)}$, so we get that $\pi_1(A\times \mathbb{S}^m)=\pi_1(A^{(2)}\vee (\mathbb{S}^m)^{(2)})=\pi_1(A^{(2)})=\pi_1(\tilde{A})$. Because $F_{|A\vee \mathbb{S}^m}=pf\vee \alpha$, this implies that $F_*(\pi_1(A\times \mathbb{S}^m))=(pf)_*(\pi_1(A))\subseteq p_*(\pi_1(\tilde{X}))$. Hence, the map $F \colon  A\times \mathbb{S}^m\to X$ lifts to $\tilde F \colon  A\times \mathbb{S}^m\to \tilde{X}$. Because $p\tilde{F}_{|A\vee\mathbb{S}^m}=F_{|A\vee \mathbb{S}^m}=p(f\vee\beta)$, the map $\tilde{F}$ extends $f\vee\beta \colon  A\vee\mathbb{S}^m\to\tilde{X}$ and so $\beta\in G_m^f(\tilde{X})$.

Now, we show that that $ p_{\ast|} \colon  P_m^f(\tilde{X})\to P_m^{pf}(X)$ is surjective for $m>1$.
Given $\alpha\in  P_m^{pf}(X)$, there is $\beta\in\pi_m(\tilde{X})$ such that $p\beta=\alpha$. 
Then, $p_\ast[\beta,f\gamma]=[p\beta,pf\gamma]=[\alpha,pf\gamma]=0$ for any $\gamma\in \pi_m(\tilde{X})$ which implies $[\beta,f\gamma]=0$ and so $\beta\in P_m^f(\tilde{X})$,
and the proof is complete.
\end{proof}

Then, Proposition~\ref{L} and Theorem~\ref{PP} yield:
\begin{corollary}\label{C} If $p \colon  \tilde{X}\to X$ is a covering map then $G_m(\tilde{X})=p_*^{-1}(G_m^p(X))$ and  $P_m(\tilde{X})=p_*^{-1}(P_m^p(X))$ for $m\geq  1$. Further, if $X$ is a simple space then $P_m(\tilde{X})=p_*^{-1}(P_m^p(X))=p_*^{-1}(P_m(X))$ for $m\geq 1$.
\end{corollary}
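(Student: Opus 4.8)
The plan is to derive this immediately from the results already in hand, specializing the generalized statements to the case $f = \mathrm{id}_{\tilde X}$. First I would observe that for the identity map the generalized Gottlieb and Whitehead center groups collapse to the ordinary ones: $G_m^{\mathrm{id}_{\tilde X}}(\tilde X) = G_m(\tilde X)$ and $P_m^{\mathrm{id}_{\tilde X}}(\tilde X) = P_m(\tilde X)$, directly from the definitions. Likewise, the composite $pf = p\,\mathrm{id}_{\tilde X} = p$, so the relevant ``target'' groups become $G_m^p(X)$ and $P_m^p(X)$. With these substitutions, Theorem~\ref{PP} applied to $A = \tilde X$, $f = \mathrm{id}_{\tilde X}$ reads exactly $G_m(\tilde X) = p_*^{-1}(G_m^p(X))$ and $P_m(\tilde X) = p_*^{-1}(P_m^p(X))$ for $m \ge 1$, which is the first assertion.

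For the second assertion I would invoke Proposition~\ref{L}. When $X$ is simple, that proposition gives $P_m(\tilde X) = p_*^{-1}(P_m(X))$ for $m \ge 1$. Combining this with the equality $P_m(\tilde X) = p_*^{-1}(P_m^p(X))$ just obtained, we get the chain $P_m(\tilde X) = p_*^{-1}(P_m^p(X)) = p_*^{-1}(P_m(X))$; the middle term is sandwiched because both outer expressions equal $P_m(\tilde X)$. (One may also note directly that $P_m^p(X) \subseteq P_m(X)$ always, and that on a simple space the Whitehead products with $\pi_1$ vanish automatically, so the two subgroups of $\pi_m(X)$ that matter for the lifting coincide after pulling back along the monomorphism $p_*$.) This completes the proof.

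I do not anticipate any real obstacle here: the corollary is a formal consequence of Theorem~\ref{PP} and Proposition~\ref{L}, and the only thing to be careful about is the bookkeeping of which map plays the role of $f$ and which of $pf$ when specializing. The one point worth stating explicitly rather than leaving implicit is the identification $G_m^{\mathrm{id}_{\tilde X}}(\tilde X) = G_m(\tilde X)$ (and its $P$-analogue), since the whole reduction rests on it; everything else is substitution into previously established equalities.
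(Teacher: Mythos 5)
Your proof is correct and follows exactly the paper's route: the paper derives this corollary simply by citing Proposition~\ref{L} and Theorem~\ref{PP}, specialized to $A=\tilde X$ and $f=\mathrm{id}_{\tilde X}$, precisely as you do. One small slip in your parenthetical aside: the general inclusion goes the other way, $P_m(X)\subseteq P_m^p(X)$ (the generalized group imposes fewer conditions), but this does not affect your main argument, which correctly sandwiches the middle term between two expressions both shown to equal $P_m(\tilde X)$.
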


Given a finite group $H$ acting freely and cellularly on $\mathbb{S}^{2n+1}$  write  $\mathbb{S}^{2n+1}/H$ for the  orbit space. Oprea \cite[\textsc{Theorem~A}]{oprea} has shown that  $G_1(\mathbb{S}^{2n+1}/H) = \mathcal{Z}H$. Then, the relations
 \[\mathcal{Z}H=G_1(\mathbb{S}^{2n+1}/H) \subseteq P_1(\mathbb{S}^{2n+1}/H)\subseteq \mathcal{Z}H\] imply that 
$P_1(\mathbb{S}^{2n+1}/H)= \mathcal{Z}H$, what was already observed by Gottlieb \cite[\textsection3]{gottlieb1} and follows
from Remark~\ref{rem.homoteq}\ref{rem.homoteq.2} as well.

Let $\mathbb{F}$ denote the field of reals $\mathbb{R}$, complex numbers $\mathbb{C}$ or the skew $\mathbb{R}$-algebra of quaternions $\mathbb{H}$ and
$\mathbb{F} P^n$ the appropriate projective $n$-space for $n\ge 1$.
Write $\gamma_n(\mathbb{F})\colon \mathbb{S}^{d(n+1)-1}\to \mathbb{F} P^n$ for the quotient map, where $d=\dim_{\mathbb{R}}\mathbb{F}$ and $i_{\mathbb{F}}\colon \mathbb{S}^d=\mathbb{F}P^1\hookrightarrow \mathbb{F}P^n$ for the canonical inclusion with $n\geq 1$, and recall that \[ \pi_m(\mathbb{F}P^n) = \gamma_n(\mathbb{F})_*\pi_m(\mathbb{S}^{d(n+1)-1})\oplus i_{\mathbb{F}*}E\pi_{m-1}(\mathbb{S}^{d-1}).  \] Given a space $X$ and a prime number $p$, write $\pi_m(X;p)$ for the $p$-primary component of $\pi_m(X)$. Then, the results below are direct consequences of Remark~\ref{rem.homoteq}\ref{rem.homoteq.5} and the result \cite[(4.1-3)]{barratt} (see also \cite[Lemma~2.4]{juno-marek}).

\begin{example} 
\begin{enumerate}\thmenumhspace
\item $G_m^{\gamma_n(\mathbb{F})}(\mathbb{F} P^n)=P_m^{\gamma_n(\mathbb{F})}(\mathbb{F} P^n)=\operatorname{Ker}[\gamma_{n}(\mathbb{F}),-]$ for $m,n\ge 1$.

\item $P_m^{\gamma_n(\mathbb{R})}(\mathbb{R} P^n)=P_m(\mathbb{R} P^n)=\gamma_{n}(\mathbb{R})_*P_m(\mathbb{S}^n)$ for $m> 1$, $n\ge 1$ and
\[P_1^{\gamma_n(\mathbb{R})}(\mathbb{R} P^n)=\begin{cases}\pi_1(\mathbb{R} P^n), &\text{if $n$ is odd;}\\
0, & \text{if $n$ is even.}
\end{cases}\]

\item
\begin{enumerate}
\item  $P_m^{\gamma_n(\mathbb{C})}(\mathbb{C} P^n)=P_m(\mathbb{C} P^n)=\gamma_{n}(\mathbb{C})_*P_m(\mathbb{S}^{2n+1})$ for $m>2$ and $n\geq 1$ odd,  and
\[P_2^{\gamma_n(\mathbb{C})}(\mathbb{C} P^n)=\begin{cases}\pi_2(\mathbb{C} P^n), & \text{if $n$ is odd;}\\
2\pi_2(\mathbb{C} P^n), & \text{if $n$ is even.}
\end{cases}\]

In particular, $P_m^{\eta_2}(\mathbb{S}^2)=\pi_m(\mathbb{S}^2)$ for $m\geq 1$. 

\item  $P_{2n+1}^{\gamma_n(\mathbb{C})}(\mathbb{C} P^n)=P_{2n+1}(\mathbb{C} P^n)=\gamma_{n}(\mathbb{C})_*P_{2n+1}(\mathbb{S}^{2n+1})$ and $2P_m^{\gamma_n(\mathbb{C})}(\mathbb{C} P^n)\subseteq P_m(\mathbb{C} P^n)=\gamma_{n}(\mathbb{C})_*P_m(\mathbb{S}^{2n+1})$ for $m>2n+1$ and $n$ even.

\end{enumerate}
\item
\begin{enumerate}
\item $P_m^{\gamma_n(\mathbb{H})}(\mathbb{H} P^n;p)=\gamma_{n}(\mathbb{H})_*\pi_m(\mathbb{S}^{4n+3};p)\oplus i_{\mathbb{H}\ast}EL''_{m-1}(\mathbb{S}^3;p)$, if $p$ is an odd prime.

\item $P_m^{\gamma_n(\mathbb{H})}(\mathbb{H} P^n;2)=\gamma_{n}(\mathbb{H})_*\pi_m(\mathbb{S}^{4n+3};2)\oplus i_{\mathbb{H}\ast}EL''(\mathbb{S}^3;2)$,  provided  \[[\iota_{4n+3},\pi_m^{4n+3}]\cap (n+1)\nu_{4n+3}E^{4n+3}\pi_{m-1}^3=0,\] where $L''_{m-1}(\mathbb{S}^3)=\{\beta\in\pi_{m-1}(\mathbb{S}^3);\; [i_{\mathbb{H}}E\beta,\gamma_n{(\mathbb{H})}]=0\}$.
\end{enumerate} 
 In particular,  \[ P_m^{\nu_4}(\mathbb{S}^4)=\begin{cases}\nu_{4\ast}\pi_m(\mathbb{S}^{7};p)\oplus EL''_{m-1}(\mathbb{S}^3;p), & \text{if $p$ is an odd prime;}\\
\nu_{4\ast}\pi_m(\mathbb{S}^{7};2)\oplus EL''(\mathbb{S}^3;2).
\end{cases}\]

\item $P_m^{\sigma_8}(\mathbb{S}^8)=\begin{cases}\sigma_{8\ast}\pi_m(\mathbb{S}^{15};p)\oplus EL''_{m-1}(\mathbb{S}^7;p), & \text{if $p$ is an odd prime;}\\
\sigma_{8\ast}\pi_m(\mathbb{S}^{15};2)\oplus EL''(\mathbb{S}^7;2). 
\end{cases}$

\item $G_m^{i_\mathbb{F}}(\mathbb{F}P^n)=P_m^{i_\mathbb{F}}(\mathbb{F}P^n)=\operatorname{Ker}[i_{\mathbb{F}},-]$ for $m,n\geq 1$.

\item $P_m^{i_{\mathbb{R}}}(\mathbb{R} P^n)=\pi_m(\mathbb{R} P^n)$ for $m,n\geq  1$ provided  $n$ is odd, and $P_m^{i_{\mathbb{R}}}(\mathbb{R} P^n)=0$ for $1\leq m\leq n$ provided $n$ is even.

\item 
\begin{enumerate}
\item $P_2^{i_{\mathbb{C}}}(\mathbb{C} P^1)=0$ and  $P_m^{i_{\mathbb{C}}}(\mathbb{C} P^n)=\pi_m(\mathbb{C} P^n)$ for $m>  2$ provided  $n\geq  1$ is odd.
\item  $P_m^{i_{\mathbb{C}}}(\mathbb{C} P^n)=\pi_m(\mathbb{C}P^n)$ for $1\leq m< 2n+1$, $P_{2n+1}^{i_{\mathbb{C}}}(\mathbb{C} P^n)=2\pi_{2n+1}(\mathbb{C}P^n)$ and $P_m^{i_{\mathbb{C}}}(\mathbb{C} P^n)\supseteq 2\pi_m(\mathbb{C}P^n)$ for $m>2n+1$ provided $n$ is even.
\end{enumerate} 

\item $P_4^{i_{\mathbb{H}}}(\mathbb{H} P^1)=0$ and $P_m^{i_{\mathbb{H}}}(\mathbb{H} P^n)=\pi_m(\mathbb{H}P^n)$ for $4< m < 4n+3$, \[P_{4n+3}^{i_{\mathbb{H}}}(\mathbb{H} P^n)=\frac{24}{(n+1,24)}\gamma_n(\mathbb{H})_*\pi_{4n+3}(\mathbb{S}^{4n+3})\oplus i_{\mathbb{H}*}E\pi_{4n+2}(\mathbb{S}^3)\] and 
$P_m^{i_{\mathbb{H}}}(\mathbb{H} P^n)\supseteq \frac{24}{(n+1,24)}\gamma_n(\mathbb{H})_*\pi_m(\mathbb{S}^{4n+3})\oplus i_{\mathbb{H}*}E\pi_{m-1}(\mathbb{S}^3)$ for $m> 4n+3$ and $n\geq 1$.
\end{enumerate}
\end{example}

To extend \cite[\textsc{Theorem~A}]{oprea} and state the main result, we make some prerequisites. Let   $\pi$ be an abelian group. Given $\Phi \colon  A\times \mathbb{S}^1\to X$, write $f=\Phi_{|A}$ for its restriction to the space $A$.
The induced map $\Phi^\ast \colon  H^n(X;\pi)\to H^n( A\times \mathbb{S}^1;\pi)$ on cohomology gives \[\Phi^\ast(x)= f^\ast(x)\otimes 1+ x_\Phi\otimes\lambda,\] where $x\in H^n(X;\pi)$, the element $\lambda$ is a chosen generator of $H^1(\mathbb{S}^1;\mathbb{Z})$ and $x_\Phi\in H^{n-1}(X;\pi)$.

Now, take an integer $m>0$. Recall that a map $p \colon  E\to B$ is called a \textit{principal $K(\pi,m)$-fibration} if it is a pullback of the path fibration $K(\pi,m)\to PK(\pi,m+1)\to K(\pi,m+1)$ via a map $k \colon  B\to K(\pi,m+1)$. If $\iota\in H^{m+1}(K(\pi,m+1);\pi)$ is the characteristic class, let $k^\ast(\iota)=\mu\in H^{m+1}(B;\pi)$ and recall that a map $\varphi \colon  Y\to B$ has a lifting $\bar{\varphi} \colon  Y\to E$ if and only if $\varphi^\ast(\mu)=0$.

Then, following \textit{mutatis mutandis}  the proof of \cite[\textsc{Theorem~1}]{oprea}, we can show a fundamental lifting result due to Gottlieb \cite{gottlieb}:

\begin{lemma}\label{Ll} Let $p \colon  E\to B$ and  $p' \colon E' \to B' $ be  principal $K(\pi,m)$- and  $K(\pi',m)$-fibrations, respectively with a commutative diagram 
\[\xymatrix@C=1cm{E\ar[d]_{p}\ar[r]^{\bar{f}}&E'\ar[d]^{p'}\\
B \ar[r]_{f}&B'}\]
and  $\Phi \colon B\times  \mathbb{S}^1 \to B'$ such that $\Phi_{|B}=f$. Then, there exists a map $\bar{\Phi} \colon  E\times \mathbb{S}^1\to E'$ such that $\bar \Phi_{|E}=\bar f$ and the diagram
\[\xymatrix@C=1.5cm{E\times \mathbb{S}^1\ar[d]_{ p\times \mathrm{id}_{\mathbb{S}^1} }\ar@{-->}[r]^-{\bar{\Phi}}&E'\ar[d]^{p'}\\
B\times \mathbb{S}^1 \ar[r]_-{\Phi}&B'}\] commutes
if and only if $\mu'_\Phi=0\in H^m(B;\pi')$, where $\Phi^\ast(\mu')= \phi^\ast(\mu')\otimes 1+ \mu'_\Phi\otimes\lambda$ and $\lambda$ is a chosen generator of $H^1(\mathbb{S}^1;\mathbb{Z})$.
\end{lemma}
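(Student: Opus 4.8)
The plan is to read the desired $\bar\Phi$ as a lifting through $p'$ and then invoke the lifting criterion recalled just before the lemma, namely that a map $\varphi\colon Y\to B'$ lifts through the principal $K(\pi',m)$-fibration $p'$ if and only if $\varphi^\ast(\mu')=0$ in $H^{m+1}(Y;\pi')$, where $\mu'=(k')^\ast(\iota')$. Writing $g=\Phi\circ(p\times\mathrm{id}_{\mathbb{S}^1})\colon E\times\mathbb{S}^1\to B'$, a map $\bar\Phi$ making the lower square commute is precisely a lift of $g$ through $p'$; and since $\Phi_{|B}=f$, the restriction $g_{|E}$ (here $E$ denotes $E\times\{\ast\}$) equals $fp=p'\bar f$, so the hypothesis already supplies the lift $\bar f$ of $g$ over the subspace $E$. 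Thus the content of the lemma is that $g$ admits a lift through $p'$ restricting to $\bar f$ on $E$, and that this happens exactly when $\mu'_\Phi=0$.

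First I would compute $g^\ast(\mu')$. By the Künneth theorem $H^{m+1}(E\times\mathbb{S}^1;\pi')\cong H^{m+1}(E;\pi')\oplus\bigl(H^m(E;\pi')\otimes\lambda\bigr)$, with no Tor terms since $H^\ast(\mathbb{S}^1;\mathbb{Z})$ is free; naturality together with $\Phi^\ast(\mu')=f^\ast(\mu')\otimes 1+\mu'_\Phi\otimes\lambda$ gives $g^\ast(\mu')=\bigl(p^\ast f^\ast(\mu')\bigr)\otimes 1+\bigl(p^\ast\mu'_\Phi\bigr)\otimes\lambda$. The decisive identity is $p^\ast f^\ast(\mu')=0$: we have $p^\ast f^\ast(\mu')=(fp)^\ast(\mu')=(p'\bar f)^\ast(\mu')=\bar f^\ast\bigl((p')^\ast(\mu')\bigr)$ and $(p')^\ast(\mu')=(k'p')^\ast(\iota')=0$, because $k'\circ p'$ factors through the contractible path space $PK(\pi',m+1)$, $p'$ being a pullback of the path fibration along $k'$. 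Hence $g^\ast(\mu')=\bigl(p^\ast\mu'_\Phi\bigr)\otimes\lambda$. This already yields the ``only if'' direction: if $\bar\Phi$ exists then $g$ lifts through $p'$, so $g^\ast(\mu')=0$, whence $p^\ast\mu'_\Phi=0$; and since the homotopy exact sequence of the fibration $p\colon E\to B$ with fibre $K(\pi,m)$ shows that $p$ is an $m$-equivalence, $p^\ast\colon H^m(B;\pi')\to H^m(E;\pi')$ is injective and therefore $\mu'_\Phi=0$.

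For the ``if'' direction, assume $\mu'_\Phi=0$; then $g^\ast(\mu')=0$, so $g$ has some lift $\bar g_0\colon E\times\mathbb{S}^1\to E'$ through $p'$. Over $E$ the maps $\bar g_0$ and $\bar f$ are two lifts of $g_{|E}=fp$ through the principal $K(\pi',m)$-fibration $p'$; since the lifts of a fixed map through such a fibration form, when nonempty, a torsor over the corresponding group $H^m(-;\pi')$, naturally in the source, these two lifts differ by a class $d\in H^m(E;\pi')$ under the principal $K(\pi',m)$-action. The projection $E\times\mathbb{S}^1\to E$ splits the inclusion, so the restriction $H^m(E\times\mathbb{S}^1;\pi')\to H^m(E;\pi')$ is onto; choosing $e$ mapping onto $d$ and acting by $e$ on $\bar g_0$ produces a lift $\bar\Phi$ of $g$ with $\bar\Phi_{|E}$ vertically homotopic to $\bar f$, and by the homotopy extension property we may arrange $\bar\Phi_{|E}=\bar f$. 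Then $\bar\Phi$ makes the required square commute, and the lemma follows. This is Oprea's argument for \cite[\textsc{Theorem~1}]{oprea}, with his single fibration replaced by the pair $p,p'$ and his identity maps by $f,\bar f$, so I would present it by indicating those substitutions rather than repeating all the details.

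The step I expect to be the main obstacle is the bookkeeping with the partial lift in the ``if'' direction: making precise that the torsor structure on the set of lifts is natural with respect to the restriction $E\times\mathbb{S}^1\to E$, so that modifying $\bar g_0$ by a class pulled back from $E\times\mathbb{S}^1$ genuinely repairs its restriction to $E$, and then promoting the resulting vertical homotopy to an honest equality. The remaining ingredients — the Künneth splitting, the vanishing $(p')^\ast(\mu')=0$, and the injectivity of $p^\ast$ on $H^m$ — are routine.
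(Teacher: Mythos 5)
Your argument is correct and is precisely the intended one: the paper gives no proof of Lemma~\ref{Ll} beyond the instruction to follow Oprea's Theorem~1 \emph{mutatis mutandis}, and your obstruction-theoretic adaptation (reading $\bar\Phi$ as a lift of $\Phi\circ(p\times\mathrm{id}_{\mathbb{S}^1})$, computing $g^\ast(\mu')=p^\ast\mu'_\Phi\otimes\lambda$ via K\"unneth and $(p')^\ast(\mu')=0$, using injectivity of $p^\ast$ on $H^m$ for the forward direction and the $H^m(-;\pi')$-torsor structure on lifts plus the cofibration $E\hookrightarrow E\times\mathbb{S}^1$ to correct the restriction for the converse) is exactly that adaptation. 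No gaps.
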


\begin{remark} As in \cite[\textsc{Remark}, p.\ 68]{oprea}, we notice that without loss of generality we can take the diagrams above to be homotopy commutative.
\end{remark}

Next, given a space $X$, consider its universal covering $p \colon  \tilde{X}\to X$. As usual, we can take its classifying map $X\to K(\pi_1(X),1)$ to be an inclusion.
If $\pi_1(X)$ acts trivially on $\pi_m(X)$ for $m>1$ then  the pair $(K(\pi_1(X),1),X)$ is simple. Hence, according to \cite[Chapter 8, \textsection 3]{spanier}, the  Moore--Postnikov tower

\begin{equation*}
\xymatrix@C=3.5cm@R=.5cm{
 & \vdots \ar[d] \\ &  X(m+1) \ar[d]^{p_{X(m+1)}} \\ & X(m) \ar[d] \\ & \vdots \ar[d] \\ &  X(1) \ar[d] \\ X\ar[r] \ar[ru] \ar[ruuu] \ar[ruuuu]^{q_{X(m+1)}} & X(0) \rlap {${}= K(\pi_1(X),1)$} 
}
\end{equation*}
for the classifying map  $X\to K(\pi_1(X),1)$ of the covering $p$ exists, where  $X(m)$  is called the $m$\textsuperscript{th} stage of this tower for $m\geq 0$.

\medskip
From now on, we assume that $A$ and $X$ are spaces  such that $\pi_1(A)$ and $\pi_1(X)$ act trivially on $\pi_m(A)$ and $\pi_m(X)$ for $m>1$, respectively. Given a map $f\colon A\to X$, write $f(m)\colon A(m)\to X(m)$ for the induced map of the $m$\textsuperscript{th} stages for $m\ge 0$.

If $\alpha\in G_1^{f(m+1)}(X(m+1))$ and $\bar \Phi\colon A(m+1)\times \mathbb{S}^1\to X(m+1)$ is the associated map then naturality of Postnikov system provides a homotopy commutative diagram \[\xymatrix@C=1.5cm{ A(m+1)\times \mathbb{S}^1 \ar[d]_{p_{A(m+1)} \times \mathrm{id}_{\mathbb{S}^1}}\ar[r]^-{\bar \Phi}
& X(m+1)\ar[d]^{p_{X(m+1)}}\\
 A(m)\times \mathbb{S}^1 \ar@{-->}[r]_-{\Phi}&X(m) \rlap{.}}\]
Taking $p_{A(m+1)} \colon  A(m+1)\to A(m) $ to be an inclusion, the obstructions to the existence of a relative homotopy from $\Phi_{|A(m)}$ to $f(m)$ lie in $H^i(A(m),A(m+1);\pi_i(X(m)))=0$. Hence, $\Phi_{|A(m)}$ is homotopic to $f(m)$ and $\Phi$ is the associated map to $p_{X(m+1)*}(\alpha)\in G_1^{f(m)}(X(m))$.

Then, Lemma~\ref{Ll} leads to the following generalization of \cite[Lemma~4]{haslam} and \cite[\textsc{Theorem~2}]{oprea}:
\begin{proposition}The map $p_{X(m+1)}\colon X(m+1)\to X(m)$ implies a homomorphism $p_{X(m+1)*}\colon G_1^{f(m+1)}(X(m+1))\to G_1^{f(m)}(X(m))$.
\end{proposition}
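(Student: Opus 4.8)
The plan is to verify that the assignment $\alpha\mapsto p_{X(m+1)*}(\alpha)$ already described in the paragraph preceding the statement is well-defined (i.e. really lands in $G_1^{f(m)}(X(m))$) and is a homomorphism. The first point is essentially done in the discussion above: starting from $\alpha\in G_1^{f(m+1)}(X(m+1))$ with associated extension $\bar\Phi\colon A(m+1)\times\mathbb{S}^1\to X(m+1)$ of $f(m+1)\vee\alpha$, naturality of the Moore--Postnikov tower gives the homotopy commutative square with $\Phi\colon A(m)\times\mathbb{S}^1\to X(m)$ on the bottom, and the obstruction argument (obstructions lying in $H^i(A(m),A(m+1);\pi_i(X(m)))=0$) shows that after a homotopy we may assume $\Phi_{|A(m)}=f(m)$; so $\Phi$ witnesses $p_{X(m+1)*}(\alpha)\in G_1^{f(m)}(X(m))$. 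Thus the map $p_{X(m+1)*}$ is well-defined as a set map; what remains is to see it respects the group structure.

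Next I would invoke Lemma~\ref{Ll} to make the construction canonical enough to be additive. The key observation is that both $p_{X(m+1)}\colon X(m+1)\to X(m)$ and $p_{A(m+1)}\colon A(m+1)\to A(m)$ are principal $K(\pi_m(X(m)),m)$- and $K(\pi_m(A(m)),m)$-fibrations respectively, and the square of the $m$\textsuperscript{th} stages of $f$ fits the hypotheses of Lemma~\ref{Ll}. Hence, given a map $\Phi\colon A(m)\times\mathbb{S}^1\to X(m)$ extending $f(m)$, the existence of a lift $\bar\Phi\colon A(m+1)\times\mathbb{S}^1\to X(m+1)$ extending $f(m+1)$ is governed by the single cohomology class $\mu_\Phi\in H^m(A(m);\pi_m(X(m)))$, which vanishes precisely when $\Phi$ comes from the next stage. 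The point is that $\Phi\mapsto\mu_\Phi$ is additive in $\Phi$: the formula $\Phi^*(\mu)=f(m)^*(\mu)\otimes 1+\mu_\Phi\otimes\lambda$ extracts $\mu_\Phi$ linearly from $\Phi^*$, and for $\alpha,\beta\in G_1^{f(m)}(X(m))$ one can arrange the associated maps $\Phi_\alpha,\Phi_\beta,\Phi_{\alpha\beta}$ so that $\mu_{\Phi_{\alpha\beta}}=\mu_{\Phi_\alpha}+\mu_{\Phi_\beta}$ (this uses that the loop-sum of the $\mathbb{S}^1$-coordinate induces addition on the relevant cohomology, exactly as in the proof that $G_1$ is a subgroup). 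Consequently $\alpha,\beta\in\operatorname{Im}(p_{X(m+1)*})$ forces $\mu_{\Phi_{\alpha\beta}}=0$, and more precisely $p_{X(m+1)*}$ carries the product of lifts to the product, so it is a homomorphism.

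The main obstacle I expect is the additivity bookkeeping in the previous paragraph: one must choose the extensions $\bar\Phi_\alpha$, $\bar\Phi_\beta$ compatibly so that the extension associated with $\alpha\beta$ is literally (up to homotopy) obtained by the pinch-map construction $A\times\mathbb{S}^1\to A\times(\mathbb{S}^1\vee\mathbb{S}^1)\to X$, and then track this through the evaluation-map description $G_1^f(X)=\operatorname{Im}(\mathrm{ev}_*\colon\pi_1(X^A,f)\to\pi_1(X,x_0))$ so that the group operation on $G_1$ is visibly compatible with the fibration lifting. Once the identification $G_1^{f(m)}(X(m))=\operatorname{Im}(\mathrm{ev}_*\colon\pi_1(X(m)^{A(m)},f(m))\to\pi_1(X(m)))$ is in hand, the map $p_{X(m+1)*}$ is simply $\pi_1$ applied to $p_{X(m+1)}^{A(m+1)}$ composed with the restriction $X(m+1)^{A(m+1)}\to X(m)^{A(m+1)}\leftarrow X(m)^{A(m)}$, hence automatically a homomorphism; Lemma~\ref{Ll} is then exactly what guarantees that the image of this composite is the claimed subgroup. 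I would present the argument in this order: (i) recall the well-definedness from the obstruction computation above; (ii) recast $G_1^{f(k)}(X(k))$ via the evaluation map; (iii) observe $p_{X(m+1)*}$ is induced by an honest map of mapping spaces, hence a homomorphism; (iv) use Lemma~\ref{Ll} to confirm the image lands where claimed.
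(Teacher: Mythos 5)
Your proposal is correct and follows essentially the same route as the paper: the only real content is the containment $p_{X(m+1)*}\bigl(G_1^{f(m+1)}(X(m+1))\bigr)\subseteq G_1^{f(m)}(X(m))$, which is exactly the naturality-plus-obstruction argument of the preceding paragraph, after which the homomorphism property is automatic because the map is the restriction of the group homomorphism $\pi_1(X(m+1))\to\pi_1(X(m))$ (your point (iii)). The middle paragraph on additivity of $\mu_\Phi$ is superfluous for this purpose, and it contains a small index slip: the fiber of $X(m+1)\to X(m)$ is $K(\pi_{m+1}(X),m+1)$, so the relevant obstruction class lives in $H^{m+1}(A(m);\pi_{m+1}(X))$ rather than $H^{m}(A(m);\pi_{m}(X(m)))$, though this does not affect the argument.
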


Now, for  $\alpha\in \mathcal{Z}_{\pi_1(X)}f_\ast(\pi_1(A))$, consider the map $\Phi_\alpha \colon  K(\pi_1(A),1)\times \mathbb{S}^1 \to K(\pi_1(X),1)$ corresponding to the homomorphism \[\pi_1(A)\times \mathbb{Z}\to \pi_1(X)\]
given by $(g,n)\mapsto \alpha^nf_\ast(g)$ for $(g,n)\in \pi_1(A)\times \mathbb{Z}$.

If   $H^m(A(m-1);\pi_m(X))=0$ for $m>1$ then Lemma~\ref{Ll} leads to commutative diagrams \[\xymatrix@C=2cm{ A(m+1)\times \mathbb{S}^1 \ar[d]_{p_{A(m+1)} \times \mathrm{id}_{\mathbb{S}^1}}\ar[r]^-{\Phi_\alpha{(m+1)}}
& X(m+1)\ar[d]^{p_{X(m+1)}}\\
 A(m)\times \mathbb{S}^1 \ar[r]_-{\Phi_\alpha(m)}&X(m)}\]
with $\Phi_\alpha(m)_{|A(m)}= f(m)$. Hence, we obtain a map \[   \lim\limits_\leftarrow \Phi_\alpha(m)\colon \lim\limits_\leftarrow A(m)\times \mathbb{S}^1\to \lim\limits_\leftarrow X(m).\] Let $\phi(A)\colon A\to  \lim\limits_\leftarrow A(m)$
and $\phi(X)\colon X\to  \lim\limits_\leftarrow X(m)$ denote the standard weak homotopy equivalences.
Then,  there is a unique (up to homotopy) map $\hat{\Phi}_\alpha\colon  A\times \mathbb{S}^1 \to  X$ which makes the diagram
\[\xymatrix@C=2.5cm{A\times \mathbb{S}^1\ar[d]_{ \phi(A)\times \mathrm{id}_{\mathbb{S}^1} }\ar@{-->}[r]^-{\hat{\Phi}_\alpha}
& X \ar[d]^{\phi(X)}\\
\lim\limits_\leftarrow A(m)\times \mathbb{S}^1\ar[r]_-{\lim\limits_\leftarrow\Phi_\alpha(m)}&\lim\limits_\leftarrow X(m)}\]
commutative. Certainly, $\hat\Phi_{\alpha|\mathbb{S}^1}=\alpha$. To see that  $\hat\Phi_{\alpha| A}=f$, observe that $\dim A=d<\infty$ provides a bijection of homotopy classes $[A,X]\cong[A,X(d)]\cong[A(d),X(d)]$ and $\hat\Phi_{\alpha| A}$ corresponds to $\Phi_\alpha(d)_{|A(d)}= f(d)$. Then, this bijection implies $\hat\Phi_{\alpha| A}= f$ and  we may state:
\begin{proposition}\label{ll} If $f \colon A\to X$ with $\dim A=d<\infty$,
$\alpha \in \mathcal{Z}_{\pi_1(X)}f_\ast(\pi_1(A))$ and $H^m(A(m-1);\pi_m(X))=0$ for $m>1$ then 
 $\alpha\in G_1^f(X)$.
\end{proposition}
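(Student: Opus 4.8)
The plan is to realise, for any $\alpha\in\mathcal{Z}_{\pi_1(X)}f_\ast(\pi_1(A))$, an honest extension $F\colon A\times\mathbb{S}^1\to X$ of $f\vee\alpha\colon A\vee\mathbb{S}^1\to X$; by definition such an $F$ exhibits $\alpha$ as an element of $G_1^f(X)$. The map $F$ will be the map $\hat\Phi_\alpha$ produced in the discussion preceding the statement, so the proof amounts to carrying out that construction and checking its two boundary conditions $\hat\Phi_{\alpha|\mathbb{S}^1}=\alpha$ and $\hat\Phi_{\alpha| A}=f$.

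First I would fix the base stage. Since $\alpha$ centralises $f_\ast(\pi_1(A))$, the rule $(g,n)\mapsto\alpha^nf_\ast(g)$ is a homomorphism $\pi_1(A)\times\mathbb{Z}\to\pi_1(X)$, and realising it on Eilenberg--MacLane spaces yields $\Phi_\alpha(0)=\Phi_\alpha\colon A(0)\times\mathbb{S}^1\to X(0)$ with $\Phi_\alpha(0)_{|A(0)}=f(0)$ and $\Phi_\alpha(0)_{|\mathbb{S}^1}$ representing $\alpha$. Then I would climb the Moore--Postnikov towers inductively: given $\Phi_\alpha(m)\colon A(m)\times\mathbb{S}^1\to X(m)$ with $\Phi_\alpha(m)_{|A(m)}=f(m)$, Lemma~\ref{Ll} applied to the commuting square of stage maps $p_{A(m+1)}$, $p_{X(m+1)}$ together with $f(m)$ and $f(m+1)$ produces a lift $\Phi_\alpha(m+1)$ extending $f(m+1)$, the single obstruction $\mu'_{\Phi_\alpha(m)}$ of that lemma lying in a group of the form $H^m(A(m-1);\pi_m(X))$, which the hypothesis declares to be zero for $m>1$; the relative-homotopy corrections needed to keep $\Phi_\alpha(m)_{|A(m)}$ equal to $f(m)$ on the nose sit in $H^i(A(m),A(m+1);\pi_i(X(m)))=0$, as already observed.

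Passing to the inverse limit assembles the $\Phi_\alpha(m)$ into a map $\lim\limits_\leftarrow\Phi_\alpha(m)\colon\lim\limits_\leftarrow A(m)\times\mathbb{S}^1\to\lim\limits_\leftarrow X(m)$; composing with the standard weak equivalences $\phi(A)$ and $\phi(X)$ and inverting $\phi(X)$ up to homotopy gives $\hat\Phi_\alpha\colon A\times\mathbb{S}^1\to X$. I would then verify the boundary conditions: $\hat\Phi_{\alpha|\mathbb{S}^1}=\alpha$ is immediate from tracking the $\mathbb{S}^1$-coordinate through the construction, while $\hat\Phi_{\alpha| A}=f$ uses $\dim A=d<\infty$, since the bijections $[A,X]\cong[A,X(d)]\cong[A(d),X(d)]$ identify $\hat\Phi_{\alpha| A}$ with $\Phi_\alpha(d)_{|A(d)}=f(d)$ and hence with $f$. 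Thus $\hat\Phi_\alpha$ extends $f\vee\alpha$ and $\alpha\in G_1^f(X)$, which under the running hypotheses upgrades the earlier inclusion $G_1^f(X)\subseteq\mathcal{Z}_{\pi_1(X)}f_\ast(\pi_1(A))$ to an equality.

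The delicate point is the inductive lifting step: one has to be confident that the obstruction delivered by Lemma~\ref{Ll} is exactly the class in $H^m(A(m-1);\pi_m(X))$ that the hypothesis annihilates, and that the lifts $\Phi_\alpha(m)$ can be chosen coherently so that $\{\Phi_\alpha(m)\}$ is a genuine map of inverse systems. Finiteness of $\dim A$ is what lets the tower be truncated and the limit map be pulled back to a map on $A\times\mathbb{S}^1$, so I would not expect the conclusion to survive dropping that hypothesis without additional argument.
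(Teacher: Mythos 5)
Your proposal is correct and follows essentially the same route as the paper: realise $\alpha$ at the $K(\pi,1)$ stage via the homomorphism $(g,n)\mapsto\alpha^nf_\ast(g)$, lift up the Moore--Postnikov towers using Lemma~\ref{Ll} with the hypothesis $H^m(A(m-1);\pi_m(X))=0$ killing each obstruction, pass to the inverse limit and the weak equivalences to obtain $\hat\Phi_\alpha$, and use $\dim A=d<\infty$ and the bijection $[A,X]\cong[A(d),X(d)]$ to check $\hat\Phi_{\alpha|A}=f$. This is precisely the argument the paper gives in the discussion preceding the statement.
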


\begin{example}
\begin{enumerate}\thmenumhspace
\item If $X=K(\pi,1)$ for some group $\pi$ then $H^m(A(m-1);\pi_m(X))=0$ for $m>1$. 
\item If $\dim A=1$ then there exists a homotopy equivalence $A\simeq K(\pi,1)$, where $\pi$ is a free group. Hence $A(m-1)=K(\pi,1)$  and consequently $H^m(A(m-1);\pi_m(X))=H^m(\pi;\pi_m(X))=0$ for $m>1$. 
\end{enumerate}

\end{example}

Moreover,  under the conditions on spaces $A$ and $X$ considered in Proposition~\ref{ll}, we are in a position to state the main result of this section:
\begin{theorem}\label{main} If $f \colon  A\to X$ with $1<\dim A=d<\infty$ 
then $G_1^f(X)=\mathcal{Z}_{\pi_1(X)}f_\ast(\pi_1(A))$.
\end{theorem}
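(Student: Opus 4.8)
The plan is to prove the two inclusions separately, leveraging the preparatory results just established. The inclusion $G_1^f(X)\subseteq\mathcal{Z}_{\pi_1(X)}f_\ast(\pi_1(A))$ is already in hand: it follows from the Lemma asserting $P_1^f(X)\subseteq\mathcal{Z}_{\pi_1(X)}f_\ast(\pi_1(A))$ together with the general containment $G_1^f(X)\subseteq P_1^f(X)$. So the substance of the theorem is the reverse inclusion $\mathcal{Z}_{\pi_1(X)}f_\ast(\pi_1(A))\subseteq G_1^f(X)$, and here the strategy is to reduce to Proposition~\ref{ll}, whose hypotheses are identical to those of the theorem except for the extra vanishing condition $H^m(A(m-1);\pi_m(X))=0$ for all $m>1$.

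First I would fix $\alpha\in\mathcal{Z}_{\pi_1(X)}f_\ast(\pi_1(A))$ and recall that $\dim A=d<\infty$, so that only finitely many obstruction groups $H^m(A(m-1);\pi_m(X))$ can be nonzero (those with $m\le d$). The idea is to not insist that $X$ itself satisfy the vanishing hypothesis, but instead to build the required map $F\colon A\times\mathbb{S}^1\to X$ by climbing the Moore--Postnikov tower of $X$ one stage at a time, using Lemma~\ref{Ll} at each step. Concretely, the map $\Phi_\alpha\colon K(\pi_1(A),1)\times\mathbb{S}^1\to K(\pi_1(X),1)$ corresponding to the homomorphism $(g,n)\mapsto\alpha^n f_\ast(g)$ gives the base case of the tower. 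Then, assuming inductively that $\Phi_\alpha$ has been lifted through $\Phi_\alpha(m)\colon A(m)\times\mathbb{S}^1\to X(m)$ with $\Phi_\alpha(m)_{|A(m)}$ homotopic to $f(m)$, I would invoke Lemma~\ref{Ll} for the principal $K(\pi_{m+1}(X),m+1)$-fibration $p_{X(m+1)}\colon X(m+1)\to X(m)$: the obstruction to lifting $\Phi_\alpha(m)$ to $A(m+1)\times\mathbb{S}^1\to X(m+1)$ is the class $\mu'_\Phi\in H^{m+1}(A(m);\pi_{m+1}(X))$ appearing in the Künneth-type splitting of $\Phi_\alpha(m)^\ast(\mu')$. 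The point is that $A(m)$ can be taken to have the homotopy type of a complex of dimension $\le d$ (since $A(m)$ receives a $(d+1)$-equivalence from $A$ and one may take a minimal model), and for $m+1>d$ this obstruction group vanishes, while for $m+1\le d$ one must check it vanishes for other reasons — and here the key observation is that $A(m)$ for $m\ge d$ is already homotopy equivalent to $A(d)$, which agrees with $A$ through dimension $d$; after finitely many stages the tower of $A$ stabilizes and the inverse limit argument of the paragraph preceding Proposition~\ref{ll} applies verbatim.

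The cleanest route, and the one I would actually write, is this: for $m\ge d$ we have $H^{m+1}(A(m);\pi_{m+1}(X))=0$ automatically because $A(m)$ has cohomological dimension $\le d<m+1$, so the lifting in Lemma~\ref{Ll} exists with no condition; for $m<d$ the potentially obstructing groups $H^{m+1}(A(m);\pi_{m+1}(X))$ with $m+1\le d$ are handled by noting that $\Phi_\alpha(m)_{|A(m)}$ is homotopic to $f(m)$, so $\mu'_\Phi$ is determined by $f$ alone and in fact lies in the image of a map that already factors through $X$ — more precisely, one argues as in the discussion before Proposition~\ref{ll} that the relevant obstruction vanishes because the compatible system $\{\Phi_\alpha(m)\}$ has already been constructed over $X$ through dimension $d$ via the bijection $[A,X]\cong[A(d),X(d)]$. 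I would therefore assemble the maps $\Phi_\alpha(m)$ into $\lim_\leftarrow\Phi_\alpha(m)\colon\lim_\leftarrow A(m)\times\mathbb{S}^1\to\lim_\leftarrow X(m)$, transport back along the weak equivalences $\phi(A),\phi(X)$ using $\dim A<\infty$ to get $\hat\Phi_\alpha\colon A\times\mathbb{S}^1\to X$, and verify $\hat\Phi_{\alpha|\mathbb{S}^1}=\alpha$ and $\hat\Phi_{\alpha|A}=f$ exactly as in the paragraph preceding Proposition~\ref{ll}. This shows $\alpha\in G_1^f(X)$, completing the reverse inclusion.

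The main obstacle I anticipate is precisely the vanishing of the obstruction classes $H^{m+1}(A(m);\pi_{m+1}(X))$ in the range $m+1\le d$, i.e.\ removing the hypothesis $H^m(A(m-1);\pi_m(X))=0$ that Proposition~\ref{ll} needed. The honest mechanism is that, because $1<\dim A=d<\infty$, one builds the lift step by step only up to stage $d$ and then the tower of $A$ is already attained; at each finite stage $m<d$ the obstruction to extending $\Phi_\alpha(m)$ is not an arbitrary class but one constrained by the fact that its restriction to the base $A(m)\times\{\ast\}$ must agree with $f(m)$, which already lifts (namely to $f(m+1)$), so the obstruction is a \emph{difference} obstruction living in a subquotient that one shows is trivial. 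Making this last point rigorous — identifying $\mu'_\Phi$ with a relative obstruction that vanishes because $f$ itself lifts through the whole tower — is the technical heart, and I would spell it out by a careful application of Lemma~\ref{Ll} together with the naturality square relating $p_{A(m+1)}$ and $p_{X(m+1)}$ that appears in the discussion just before Proposition~\ref{ll}.
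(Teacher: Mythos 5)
Your overall strategy --- the easy inclusion via $G_1^f(X)\subseteq P_1^f(X)\subseteq\mathcal{Z}_{\pi_1(X)}f_\ast(\pi_1(A))$, then climbing the Moore--Postnikov tower with Lemma~\ref{Ll} and reducing to Proposition~\ref{ll} --- is the paper's. But both of your mechanisms for killing the obstruction groups $H^m(A(m-1);\pi_m(X))$ are flawed. For $m>d$ you argue that $A(m-1)$ ``has cohomological dimension $\le d$,'' that it ``can be taken to be a complex of dimension $\le d$,'' and that the tower of $A$ ``stabilizes'' at stage $d$. None of this is true: Postnikov stages of a finite complex are typically infinite-dimensional (for $A=\mathbb{S}^2$ one has $A(2)\simeq\mathbb{C}P^\infty$, with nonzero cohomology in every even degree, and $A(3)\not\simeq A(2)$ since $\pi_3(\mathbb{S}^2)\neq 0$). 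The vanishing for $m>d$ is nevertheless correct, but for a different reason and only in the single ``edge'' degree $m$: since $q_{A(m-1)}\colon A\to A(m-1)$ is an $m$-equivalence, $H_m(A(m-1);\mathbb{Z})$ is a quotient of $H_m(A;\mathbb{Z})=0$ and $H_{m-1}(A(m-1);\mathbb{Z})\cong H_{m-1}(A;\mathbb{Z})$ is free or zero for $m-1\geq d$, so the universal coefficient theorem kills both the $\operatorname{Hom}$ and the $\operatorname{Ext}$ summands of $H^m(A(m-1);\pi_m(X))$. This Whitehead-theorem argument is what the paper actually uses.

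The more serious gap is your treatment of the range $1<m\le d$. You propose that the obstruction class $\mu'_\Phi$ vanishes there ``because $f$ itself lifts through the whole tower,'' i.e.\ that it is a difference or relative obstruction controlled by $f$. That is not so: in the decomposition $\Phi^\ast(\mu')=f^\ast(\mu')\otimes 1+\mu'_\Phi\otimes\lambda$ of Lemma~\ref{Ll}, the liftability of $f$ only kills the first summand; $\mu'_\Phi$ is an independent class, and its vanishing is precisely the nontrivial content of Oprea's theorem, where it is forced by a genuine cohomological input ($H^{\mathrm{odd}}(H;\mathbb{Z})=0$). If your mechanism worked, the equality $G_1^f(X)=\mathcal{Z}_{\pi_1(X)}f_\ast(\pi_1(A))$ would hold with no cohomological hypotheses at all beyond finite-dimensionality of $A$, which is false. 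Note that the paper does not derive this low-degree vanishing either: its proof simply asserts $H^m(A(m-1);\pi_m(X))=0$ for $1<m\le n$, which is an implicit hypothesis rather than a consequence (it is made explicit in the corollary that follows, where $\pi_i(A)=0$ for $1<i<2n+1$, $H^{2n+1}(\pi_1(A);\mathbb{Z})=0$ and $\pi_i(\Sigma(2n+1)/H)=0$ for $1<i<2n+1$ force the relevant obstruction groups themselves to vanish). So in the range $m\le d$ the vanishing must be assumed, not proved; your attempted derivation of it does not go through.
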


\begin{proof}
We know that $G_1^f(X)\subseteq\mathcal{Z}_{\pi_1(X)}f_\ast(\pi_1(A))$, so we have to reverse the inclusion. Let $\alpha\in \mathcal{Z}_{\pi_1(X)}f_\ast(\pi_1(A))$ and consider the corresponding map $\Phi_\alpha \colon   K(\pi_1(A),1)\times \mathbb{S}^1\to K(\pi_1(X),1)$.
Lemma~\ref{Ll} assures that an obstruction to a lifting of $\Phi_\alpha(m-1) \colon   A(m-1)\times \mathbb{S}^1\to X(m-1)$  to
${\Phi}_\alpha(m) \colon  A(m)\times \mathbb{S}^1\to X(m)$ as in the diagram
\[\xymatrix@C=2cm{A(m)\times \mathbb{S}^1 \ar[d]_{p_{A(m)}\times \mathrm{id}_{\mathbb{S}^1}}\ar@{-->}[r]^-{{\Phi}_\alpha(m)}&X(m)\ar[d]^{p_{X(m)}}\\
 A(m-1)\times \mathbb{S}^1 \ar[r]_-{\Phi_{\alpha}(m-1)}&X(m-1)}\]
lies in $H^{m}(A(m-1);\pi_m(X))$ for $m>1$. But,  $H^m(A(m-1);\pi_m(X))=0$ for $1<m\leq n$, and so, in view of  Proposition~\ref{ll}, we have to show that 
$H^{m}(A(m-1);\pi_m(X))=0$ for $m>n$. 

Let $m>n$. Then, the Whitehead theorem shows that the map $q_{A(m-1)}\colon  A\to A(m-1)$ induces  isomorphisms $H_i(A;\mathbb{Z})\xrightarrow{\approx} H_i(A(m-1);\mathbb{Z})$ for $i\leq m-1$ and a surjection $H_{m}(A;\mathbb{Z})\to H_{m}(A(m-1);\mathbb{Z})$. Hence $H_n(A(n);\mathbb{Z})$ is free, $H_{m-1}(A(m-1);\mathbb{Z})=0$ for $m>n+1$ and $H_m(A(m-1);\mathbb{Z})=0$ for $m>n$. The universal coefficient theorem gives
\begin{align*}
H^{m}(A(m-1);\pi_m(X) ) &\approx \operatorname{Hom}(H_{m}(A(m-1));\pi_m(X))\\ &\oplus \operatorname{Ext}(H_{m-1}(A(m-1));\pi_{m}(X))=0 
\end{align*} for $m>n$.
Hence, Proposition~\ref{ll} implies $\alpha\in G_1^f(X)$ and the proof follows.
\end{proof}

Since $G_1^f(X)\subseteq P_1^f(X) \subseteq \mathcal{Z}_{\pi_1(X)}f_*(\pi_1(A))$, we derive that $P_1^f(X) = \mathcal{Z}_{\pi_1(X)}f_*(\pi_1(A))$ under the hypothesis of Proposition~\ref{ll}.

Recall that a finite dimensional $CW$-complex $\Sigma(n)$ with the homotopy type of the $n$-sphere $\mathbb{S}^n$ is called an
$n$-{\em homotopy sphere} for $n \geq 1$. The finite periodic groups, being the only finite groups acting freely on some homotopy sphere, have been fully classified by Suzuki--Zassenhaus, see e.g., \cite[Chapter~IV, Theorem~6.15]{adem}.  It is well-known that the only finite groups acting freely on $\Sigma(2n)$ are $\mathbb{Z}_2$ and the trivial group.

Given a free and cellular action of a finite group $H$ on a homotopy sphere $\Sigma(2m+1)$, write $\gamma_{2n+1}\colon \Sigma(2n+1)\to \Sigma(2n+1)/H$ for the quotient map.
Following \cite[Chapter~VII, Proposition~10.2]{brown}, the action of $H$ on  $\pi_i(\Sigma(2m+1)/H)$ is trivial for $i>1$. Because $H^{2i+1}(H;\mathbb{Z})=0$ for $i\geq 1$ (see e.g., \cite[\textsc{Lemma~6}]{oprea}), Theorem~\ref{main} yields the following generalization of \cite[\textsc{Theorem~A}]{oprea}:
\begin{corollary}  If  $f \colon  A\to \Sigma(2n+1)/H$ with $1<\dim A=d\le 2n+1$, $\pi_i(A)=0$ for $1<i<2n+1$
and $H^{2n+1}(\pi_1(A);\mathbb{Z})=0$ then $G_1^f(\Sigma(2n+1)/H)=\mathcal{Z}_Hf_\ast(\pi_1(A))$.

In particular, if $f \colon  \Sigma(2d+1)/K\to \Sigma(2n+1)/H$ and $d\leq n$ then $G_1^f(\Sigma(2n+1)/H)=\mathcal{Z}_Hf_\ast(K)$.
\end{corollary}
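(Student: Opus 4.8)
The plan is to deduce the Corollary directly from Theorem~\ref{main} by checking that the hypotheses of that theorem are satisfied in the present setting, and then to identify the abstract centralizer $\mathcal{Z}_{\pi_1(X)}f_\ast(\pi_1(A))$ concretely. Set $X=\Sigma(2n+1)/H$. Since $\Sigma(2n+1)$ is the universal cover of $X$ (it is simply connected, being a homotopy $(2n+1)$-sphere with $n\ge 1$), we have $\pi_1(X)\cong H$, and the map $f\colon A\to X$ induces $f_\ast\colon\pi_1(A)\to H$, so $\mathcal{Z}_{\pi_1(X)}f_\ast(\pi_1(A))$ is exactly the centralizer $\mathcal{Z}_H f_\ast(\pi_1(A))$ of the subgroup $f_\ast(\pi_1(A))\le H$. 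The hypothesis $1<\dim A=d<\infty$ is assumed. It remains to verify the two standing hypotheses under which Theorem~\ref{main} was proved, namely that $\pi_1(X)$ acts trivially on $\pi_m(X)$ for $m>1$, and the vanishing condition $H^m(A(m-1);\pi_m(X))=0$ for all $m>1$ used in Proposition~\ref{ll} and in the proof of Theorem~\ref{main}.

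For the triviality of the action: by \cite[Chapter~VII, Proposition~10.2]{brown}, $H$ acts trivially on $\pi_i(\Sigma(2n+1)/H)$ for $i>1$, which is precisely the needed condition. For the cohomological vanishing, I would split it exactly as in the proof of Theorem~\ref{main}. In the range $1<m\le 2n+1$ one exploits that $\Sigma(2n+1)$ is $2n$-connected, so $\pi_m(X)=\pi_m(\Sigma(2n+1))=0$ for $1<m<2n+1$ (giving trivial coefficients there), while for $m=2n+1$ the group $A(2n)=A(m-1)$ is, by the connectivity assumptions $\pi_i(A)=0$ for $1<i<2n+1$, a space with the homotopy type of $K(\pi_1(A),1)$ through dimension $2n+1$; combined with the hypothesis $H^{2n+1}(\pi_1(A);\mathbb{Z})=0$ and the fact that $\pi_{2n+1}(X)$ is a quotient/subgroup built from $\pi_{2n+1}(\Sigma(2n+1))\cong\mathbb{Z}$ (so the coefficient computation reduces to $H^{2n+1}(\pi_1(A);\mathbb{Z})$ up to the universal coefficient theorem), one gets vanishing. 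In the range $m>2n+1=\dim X$ one uses $\dim A=d\le 2n+1$ together with the Whitehead-theorem argument already in the proof of Theorem~\ref{main}: the map $A\to A(m-1)$ induces isomorphisms on $H_i$ for $i\le m-1$ and a surjection on $H_m$, so $H_m(A(m-1);\mathbb{Z})=H_m(A;\mathbb{Z})=0$ and $H_{m-1}(A(m-1);\mathbb{Z})=H_{m-1}(A;\mathbb{Z})=0$ once $m-1\ge d$, hence $H^m(A(m-1);\pi_m(X))=0$ by universal coefficients. Feeding all this into Theorem~\ref{main} yields $G_1^f(X)=\mathcal{Z}_H f_\ast(\pi_1(A))$.

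For the ``in particular'' clause, I would take $A=\Sigma(2d+1)/K$ with $d\le n$. Then $\dim A=2d+1$, which is $>1$ and $\le 2n+1$, so the dimension hypotheses hold; $\pi_i(A)=\pi_i(\Sigma(2d+1))=0$ for $1<i<2d+1$, and since $2d+1\le 2n+1$ this covers the required range $1<i<2n+1$ except possibly when $2d+1\le i<2n+1$, i.e. $d<n$—here I should note that $\pi_i(A)$ for $i$ in that middle range need not vanish, so the cleanest route is to observe that what the proof of Theorem~\ref{main} actually needs is $H^m(A(m-1);\pi_m(X))=0$, and for $X=\Sigma(2n+1)/H$ one has $\pi_m(X)=0$ for $1<m<2n+1$ regardless, killing the obstruction in that whole range by triviality of the coefficient group, while for $m\ge 2n+1>2d+1=\dim A$ the Whitehead-theorem argument of the previous paragraph applies verbatim. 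Finally $\pi_1(A)=\pi_1(\Sigma(2d+1)/K)=K$, so $f_\ast(\pi_1(A))=f_\ast(K)$ and the conclusion reads $G_1^f(\Sigma(2n+1)/H)=\mathcal{Z}_H f_\ast(K)$.

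I expect the main obstacle to be the borderline degree $m=2n+1$: there the coefficient group $\pi_{2n+1}(X)$ is genuinely nonzero (it contains the infinite cyclic $\gamma_{2n+1\ast}\pi_{2n+1}(\Sigma(2n+1))$ image, plus possibly torsion from $\Sigma(2n+1)$), so one cannot dismiss the obstruction on coefficient grounds and must argue that $A(2n)$ is sufficiently close to $K(\pi_1(A),1)$ and invoke $H^{2n+1}(\pi_1(A);\mathbb{Z})=0$. The care needed is that $A(m-1)$ is a stage of the Moore--Postnikov tower of the classifying map $A\to K(\pi_1(A),1)$, not the full Postnikov tower of $A$, so one must check its homology matches that of $A$ in the relevant range via the Whitehead theorem exactly as in the proof of Theorem~\ref{main}; the hypothesis $H^{2n+1}(\pi_1(A);\mathbb{Z})=0$ is tailored precisely to close this gap. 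Everything else is a routine transcription of the arguments already given for Theorem~\ref{main} and Proposition~\ref{ll}.
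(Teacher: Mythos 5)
Your verification of the first part is sound and follows the paper's intended route: triviality of the $\pi_1$-action via \cite[Chapter~VII, Proposition~10.2]{brown}, vanishing of the obstruction groups $H^m(A(m-1);\pi_m(\Sigma(2n+1)/H))$ for $1<m<2n+1$ because the coefficients vanish, at $m=2n+1$ because $A(2n)\simeq K(\pi_1(A),1)$ and $\pi_{2n+1}(\Sigma(2n+1)/H)\cong\mathbb{Z}$ so the hypothesis $H^{2n+1}(\pi_1(A);\mathbb{Z})=0$ applies, and for $m>2n+1\ge d$ by the Whitehead/universal-coefficient argument already in the proof of Theorem~\ref{main}. Your observation that for $d<n$ the connectivity hypothesis of the first part fails and one must instead feed the vanishing directly into Theorem~\ref{main} is also exactly the case distinction the paper makes.

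There is, however, one genuine gap in the ``in particular'' clause: you assert that $A=\Sigma(2d+1)/K$ has $\dim A=2d+1$. Under the paper's definition, $\Sigma(2d+1)$ is merely a \emph{finite-dimensional} $CW$-complex with the homotopy type of $\mathbb{S}^{2d+1}$; its $CW$-dimension can be strictly larger than $2d+1$, and hence so can $\dim(\Sigma(2d+1)/K)$. Since the range in which the Whitehead-theorem argument applies, and the borderline degree at which $H_{\dim A}(A(\dim A);\mathbb{Z})$ must be free, are both governed by $\dim A$, you cannot simply quote the first part (or Theorem~\ref{main}) with $d=2d+1$. The paper closes this gap by invoking the topological spherical space form results of \cite{wall} (or \cite{adem-davis}) to produce a homotopy sphere $\Sigma'(2d+1)$ of actual dimension $2d+1$ with a free $K$-action such that $\Sigma(2d+1)/K\simeq\Sigma'(2d+1)/K$, and then using Remark~\ref{rem.homoteq}, which says that precomposing $f$ with a homotopy equivalence does not change $G_1^f$. (Alternatively, one could rerun the top-range argument using the fact that $H_i(\Sigma(2d+1)/K;\mathbb{Z})$ vanishes for $i>2d+1$ and is free in degree $2d+1$, since these are homotopy-invariant statements; but some such step is needed, and your proposal as written supplies neither.)
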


\begin{proof}
The first part is a direct conclusion from Theorem~\ref{main}.

Let now $f\colon \Sigma(2d+1)/K\to \Sigma(2n+1)/H$ with $d\leq n$. Notice that by \cite{adem-davis} or \cite{wall} there is a homotopy sphere $\Sigma'(2d+1)$ admitting a free action of the group $K$ such that $\dim \Sigma'(2d+1)=2d+1$ and the space forms $\Sigma(2d+1)/K$ and $\Sigma'(2d+1)/K$ are homotopy equivalent. 

Let $d=n$. Because $H^{2n+1}(K;\mathbb{Z})=0$ (see e.g., \cite[\textsc{Lemma~6}]{oprea}), the space $\Sigma'(2d+1)/K$ satisfies all required hypotheses and, in view of Remark~\ref{rem.homoteq}, the proof follows.

Let now $d<n$. Then, $\Sigma'(2d+1)/K$ satisfies all hypotheses  of Theorem~\ref{main} and, in view of Remark~\ref{rem.homoteq}, the proof is complete.
\end{proof}

Let $f \colon  A\to \Sigma(2n+1)/H$ satisfies the conditions above. First, notice that \[P_1^f(\Sigma(2n+1)/H)=G_1^f(\Sigma(2n+1)/H)=\mathcal{Z}_Hf_\ast(\pi_1(A)),\] 
\[P_1^f(\Sigma(2n+1)/H)=G_1^f(\Sigma(2n+1)/H)=H\] provided $A$ is $1$-connected.
This implies \[P_1^f(\Sigma(2n+1)/H)=G_1^f(\Sigma(2n+1)/H)=H\] for any map $f \colon \Sigma(2d+1)\to \Sigma(2n+1)/H$ and $d,n\geq 1$.
Further, \[P_1^f(\Sigma(2n+1)/H)=G_1^f(\Sigma(2n+1)/H)=P_1(\Sigma(2n+1)/H)=G_1(\Sigma(2n+1)/H)=H\] for $f\colon A \to  \Sigma(2n+1)/H$ satisfying the condition above and $H$ is abelian.
In particular, in view of Theorem~\ref{PP}, we derive:

\begin{corollary}\label{CCC}
\begin{enumerate}\thmenumhspace
\item $P_1^{\gamma_{2n+1}}(\Sigma(2n+1)/H)=G_1^{\gamma_{2n+1}}(\Sigma(2n+1)/H)=H$;

\item $G_m^{\gamma_{2n+1}}(\Sigma(2n+1)/H)=\gamma_{{2n+1}\ast}G_m(\Sigma(2n+1))$ for $m>1$;
\item  $P_m^{\gamma_{2n+1}}(\Sigma(2n+1)/H)=\gamma_{{2n+1}\ast}P_m(\Sigma(2n+1))=\gamma_{{2n+1}\ast}G_m(\Sigma(2n+1))$ for $m>1$.
\end{enumerate}
\end{corollary}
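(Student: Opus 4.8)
The plan is to read off all three identities from the covering-space results established above, applied to the quotient covering $\gamma_{2n+1}\colon\Sigma(2n+1)\to\Sigma(2n+1)/H$ and using that $\Sigma(2n+1)$ is a $1$-connected homotopy sphere.

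First I would dispose of item (1). Since $2n+1\ge 3$, the homotopy sphere $\Sigma(2n+1)$ is $1$-connected, so $A=\Sigma(2n+1)$ meets the hypotheses of the corollary derived from Theorem~\ref{main} above: $1<\dim A=2n+1\le 2n+1$, $\pi_i(A)=0$ for $1<i<2n+1$, and $H^{2n+1}(\pi_1(A);\mathbb{Z})=H^{2n+1}(0;\mathbb{Z})=0$. That corollary then yields $G_1^{\gamma_{2n+1}}(\Sigma(2n+1)/H)=\mathcal{Z}_H\gamma_{2n+1\ast}(\pi_1(\Sigma(2n+1)))=\mathcal{Z}_H\{1\}=H$; since also $G_1^{\gamma_{2n+1}}\subseteq P_1^{\gamma_{2n+1}}\subseteq\mathcal{Z}_H\gamma_{2n+1\ast}(\pi_1(\Sigma(2n+1)))=H$, equality propagates to $P_1^{\gamma_{2n+1}}(\Sigma(2n+1)/H)=H$ as well. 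Equivalently, this is exactly the displayed consequence recorded just above with $f=\gamma_{2n+1}$ and $d=n$.

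For items (2) and (3), I would invoke Corollary~\ref{C} with the covering map $p=\gamma_{2n+1}$ (the action is free and cellular), obtaining $G_m(\Sigma(2n+1))=\gamma_{2n+1\ast}^{-1}(G_m^{\gamma_{2n+1}}(\Sigma(2n+1)/H))$ and $P_m(\Sigma(2n+1))=\gamma_{2n+1\ast}^{-1}(P_m^{\gamma_{2n+1}}(\Sigma(2n+1)/H))$ for all $m\ge 1$, noting that the $P$-equality in Corollary~\ref{C} needs no simplicity hypothesis. For $m>1$ the covering induces an isomorphism $\gamma_{2n+1\ast}\colon\pi_m(\Sigma(2n+1))\to\pi_m(\Sigma(2n+1)/H)$, so applying $\gamma_{2n+1\ast}$ to both displays gives $G_m^{\gamma_{2n+1}}(\Sigma(2n+1)/H)=\gamma_{2n+1\ast}(G_m(\Sigma(2n+1)))$ and $P_m^{\gamma_{2n+1}}(\Sigma(2n+1)/H)=\gamma_{2n+1\ast}(P_m(\Sigma(2n+1)))$. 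It then remains to identify $P_m(\Sigma(2n+1))$ with $G_m(\Sigma(2n+1))$: a homotopy sphere is homotopy equivalent to $\mathbb{S}^{2n+1}$, and by Remark~\ref{rem.homoteq}\ref{rem.homoteq.1} together with Remark~\ref{rem.homoteq}\ref{rem.homoteq.5} (taking $f=\iota_{2n+1}$, so $A=\mathbb{S}^{2n+1}$) one has $G_m(\mathbb{S}^{2n+1})=P_m(\mathbb{S}^{2n+1})=\operatorname{Ker}[\iota_{2n+1},-]$, hence $G_m(\Sigma(2n+1))=P_m(\Sigma(2n+1))$ by homotopy invariance of these groups. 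Substituting back completes item (3).

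I do not anticipate a genuine obstacle here: the argument is essentially bookkeeping on top of Theorem~\ref{PP}/Corollary~\ref{C}. The only points requiring care are checking that $A=\Sigma(2n+1)$ genuinely satisfies the hypotheses of the corollary following Theorem~\ref{main}, and resisting the temptation to treat $\Sigma(2n+1)$ as literally $\mathbb{S}^{2n+1}$ — one should always route through homotopy invariance (or Remark~\ref{rem.homoteq}\ref{rem.homoteq.1}) whenever a statement phrased for $\mathbb{S}^{2n+1}$ is transported to the homotopy sphere $\Sigma(2n+1)$.
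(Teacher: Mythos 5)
Your argument is correct and coincides with the paper's own (largely implicit) proof: item (1) is exactly the specialization of the corollary to Theorem~\ref{main} to the $1$-connected source $A=\Sigma(2n+1)$ together with $G_1^f\subseteq P_1^f\subseteq\mathcal{Z}_H f_*(\pi_1(A))$, and items (2)--(3) are Theorem~\ref{PP}/Corollary~\ref{C} applied to the covering $\gamma_{2n+1}$ with $f=\mathrm{id}$, plus the identification $G_m(\mathbb{S}^{2n+1})=P_m(\mathbb{S}^{2n+1})=\operatorname{Ker}[\iota_{2n+1},-]$ transported by homotopy invariance. Your explicit remarks that the $P$-equality needs no simplicity hypothesis and that $\Sigma(2n+1)$ must be handled via homotopy invariance are exactly the right points of care.
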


Further, the result \cite[Theorem~2.3]{dac-marek} yields:

\begin{corollary} If $p$ is a prime not dividing the order  of the finite group $H$ then \[\gamma_{2n+1*}(G_m(\Sigma({2n+1});p))
= G_m(\Sigma({2n+1})/H;p)=\pi_m(\Sigma({2n+1})/H;p)\] for $m>1$.
\end{corollary}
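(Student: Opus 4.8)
The plan is to deduce the statement from Corollary~\ref{CCC}, the cited result \cite[Theorem~2.3]{dac-marek}, and the $p$-local structure of odd-dimensional spheres. The first thing I would record is that, since $2n+1\ge 3$, the homotopy sphere $\Sigma(2n+1)$ is $1$-connected and hence the universal cover of $\Sigma(2n+1)/H$; thus $\gamma_{2n+1\ast}\colon\pi_m(\Sigma(2n+1))\to\pi_m(\Sigma(2n+1)/H)$ is an isomorphism for $m>1$ and restricts to an isomorphism on $p$-primary components. This reduces the right-hand equality to the identity $G_m(\Sigma(2n+1);p)=\pi_m(\Sigma(2n+1);p)$, $m>1$, once the left-hand equality is known.

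For the left-hand equality I would first note the easy inclusion $G_m(\Sigma(2n+1)/H;p)\subseteq\gamma_{2n+1\ast}(G_m(\Sigma(2n+1);p))$: it follows from the containments $G_m\subseteq P_m\subseteq P_m^{\gamma_{2n+1}}$ together with Corollary~\ref{CCC}(3), using that $\gamma_{2n+1\ast}$ is injective. The reverse inclusion $\gamma_{2n+1\ast}(G_m(\Sigma(2n+1);p))\subseteq G_m(\Sigma(2n+1)/H;p)$ is where the hypothesis $p\nmid|H|$ is used: this is exactly what \cite[Theorem~2.3]{dac-marek} supplies, the point being that the relevant extensions and homotopies can be averaged over the deck group $H$, an operation available on $p$-primary components precisely because $|H|$ is invertible there. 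Combining the two inclusions gives the left-hand equality.

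For the remaining identity $G_m(\Sigma(2n+1);p)=\pi_m(\Sigma(2n+1);p)$, I would pass to $\mathbb{S}^{2n+1}$ via $\Sigma(2n+1)\simeq\mathbb{S}^{2n+1}$ and use Remark~\ref{rem.homoteq}\ref{rem.homoteq.5} with $f=\iota_{2n+1}$ to write $G_m(\mathbb{S}^{2n+1})=\operatorname{Ker}[\iota_{2n+1},-]$, a homomorphism for $m>1$. Since the prime $p$ in question is odd, the $p$-localization $\mathbb{S}^{2n+1}_{(p)}$ of the odd sphere is an $H$-space and therefore has vanishing Whitehead products; as $[\iota_{2n+1},\alpha]$ has $p$-power order for $\alpha\in\pi_m(\mathbb{S}^{2n+1};p)$ and dies under $p$-localization, it vanishes, so $\alpha\in G_m(\mathbb{S}^{2n+1};p)$. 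The reverse inclusion being trivial, this proves the claim; alternatively it can be quoted directly from the computations in \cite{juno-marek}.

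The step I expect to be the real obstacle is the reverse inclusion in the second paragraph: an extension $F\colon\Sigma(2n+1)\times\mathbb{S}^m\to\Sigma(2n+1)$ realizing $\alpha\in G_m(\Sigma(2n+1))$ need not commute with the $H$-action and so does not immediately descend to the orbit space; making it descend after localizing away from $|H|$ is exactly the content one borrows from \cite[Theorem~2.3]{dac-marek}. Everything else is definition-chasing and the universal-coefficient/Freudenthal bookkeeping already implicit in the statement.
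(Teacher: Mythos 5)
The paper's entire ``proof'' of this corollary is the citation of \cite[Theorem~2.3]{dac-marek}, and your argument likewise delegates the one substantive inclusion---that $\gamma_{2n+1\ast}$ of the $p$-primary part lands inside the Gottlieb group of the orbit space---to that same reference. So in substance your route coincides with the paper's; the reduction to the universal cover, and the easy inclusion $G_m(\Sigma(2n+1)/H;p)\subseteq\gamma_{2n+1\ast}(G_m(\Sigma(2n+1);p))$ via Corollary~\ref{CCC} (or more directly Proposition~\ref{l1}), are correct but routine. Your gloss that \cite{dac-marek} works by ``averaging over the deck group'' is not how that paper actually argues (it is a Postnikov-tower obstruction argument, as its title suggests), but since you are citing rather than reproving, that is cosmetic.

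There is, however, one genuine gap: in your last paragraph you write ``since the prime $p$ in question is odd,'' yet the hypothesis $p\nmid|H|$ does not force this---if $|H|$ is odd, $p=2$ is allowed. Your localization argument for $G_m(\mathbb{S}^{2n+1};p)=\pi_m(\mathbb{S}^{2n+1};p)$ genuinely needs $p$ odd, since $\mathbb{S}^{2n+1}_{(2)}$ is an $H$-space only for $2n+1=1,3,7$; and the conclusion itself fails at $p=2$. Indeed, for $n$ even the paper's own Proposition~\ref{GCP}(1) gives $G_{2n+2}(\mathbb{S}^{2n+1}/\mathbb{Z}_l)=0$ while $\pi_{2n+2}(\mathbb{S}^{2n+1}/\mathbb{Z}_l)\approx\mathbb{Z}_2$ is entirely $2$-primary; taking $l$ odd, so that $2\nmid|\mathbb{Z}_l|$, the corollary with $p=2$ would force $G_{2n+2}(\mathbb{S}^{2n+1}/\mathbb{Z}_l;2)=\mathbb{Z}_2$, a contradiction. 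So the parity of $p$ is not a harmless convenience: either $p$ must be assumed odd (which is what your proof silently does, and what the cited theorem's hypotheses must supply), or the case $p=2$ with $|H|$ odd has to be excluded. As written, the step is unjustified and the statement you are proving is false in that case; you should make the restriction to odd primes explicit.
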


\section{Gottlieb groups of spaces $\Sigma(2n+1)/\mathbb{Z}_l$}\label{sec.2}

First, recall that given a  free action $\mathbb{Z}_2\times \Sigma(n)\to \Sigma(n)$, by \cite[Lemma~2.5]{kwasik}, there is a homotopy equivalence $\Sigma(n)/\mathbb{Z}_2\simeq \mathbb{R}P^n$.
 Then, $G_m(\Sigma(n)/\mathbb{Z}_2)\approx G_m(\mathbb{R}P^n)$  and $G_m(\Sigma(n))\approx G_m(\mathbb{S}^n)$ for $m\geq 1$. Hence,  {\cite{pak}} implies:

\begin{theorem}\label{PW}$G_{2n+1}(\Sigma(2n+1)/\mathbb{Z}_2) ={\gamma_{2n+1}}_\ast G_{2n+1}(\Sigma(2n+1))$
\begin{align*}
&= \begin{cases}
\pi_{2n+1}(\Sigma(2n+1)/\mathbb{Z}_2), & \text{for $n=0,1,3$;}\\
2\pi_{2n+1}(\Sigma(2n+1)/\mathbb{Z}_2), & \text{for odd $n$ and $n\neq 0,1,3$.}
\end{cases}
\end{align*}
\end{theorem}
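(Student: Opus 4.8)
The plan is to reduce the statement entirely to the classical sphere case via the homotopy equivalence $\Sigma(2n+1)/\mathbb{Z}_2 \simeq \mathbb{R}P^{2n+1}$ together with the covering map $\gamma_{2n+1}\colon \Sigma(2n+1)\to \Sigma(2n+1)/\mathbb{Z}_2$. First I would invoke \cite[Lemma~2.5]{kwasik} to get $\Sigma(2n+1)/\mathbb{Z}_2 \simeq \mathbb{R}P^{2n+1}$, hence homotopy equivalences $\Sigma(2n+1)\simeq \mathbb{S}^{2n+1}$ on universal covers, so that $G_{2n+1}(\Sigma(2n+1)/\mathbb{Z}_2)\approx G_{2n+1}(\mathbb{R}P^{2n+1})$ and $G_{2n+1}(\Sigma(2n+1))\approx G_{2n+1}(\mathbb{S}^{2n+1})$. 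Then the computation of $G_{2n+1}(\mathbb{R}P^{2n+1})$ is exactly the content of Pak's result \cite{pak}; combined with the classical Gottlieb group $G_{2n+1}(\mathbb{S}^{2n+1})$ (which, since $2n+1$ is odd, equals $\pi_{2n+1}(\mathbb{S}^{2n+1})$ for $2n+1=1,3,7$ and $2\pi_{2n+1}(\mathbb{S}^{2n+1})$ otherwise), this yields the displayed case distinction for the group.

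The remaining point is the identification $G_{2n+1}(\Sigma(2n+1)/\mathbb{Z}_2)=\gamma_{2n+1\ast}G_{2n+1}(\Sigma(2n+1))$. For $m=2n+1>1$ the homomorphism $\gamma_{2n+1\ast}\colon \pi_{2n+1}(\Sigma(2n+1))\to \pi_{2n+1}(\Sigma(2n+1)/\mathbb{Z}_2)$ is an isomorphism since $\gamma_{2n+1}$ is a covering map; I would apply Corollary~\ref{C} (or directly Theorem~\ref{PP} with $f=\mathrm{id}$ on the universal cover together with Proposition~\ref{l1}) to conclude that this isomorphism restricts to an isomorphism $G_{2n+1}(\Sigma(2n+1))\xrightarrow{\approx} G_{2n+1}^{\gamma_{2n+1}}(\Sigma(2n+1)/\mathbb{Z}_2)$, and then observe — since $\mathbb{R}P^{2n+1}$ is a simple space in the relevant range, or simply because there is nothing below dimension $2n+1$ contributing — that $G_{2n+1}^{\gamma_{2n+1}}(\Sigma(2n+1)/\mathbb{Z}_2)=G_{2n+1}(\Sigma(2n+1)/\mathbb{Z}_2)$. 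Alternatively, this last equality is already recorded as Corollary~\ref{CCC}(2), so I would just cite it.

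I expect the main obstacle to be bookkeeping rather than anything deep: one must check that all the cited results (Pak's computation of the Gottlieb groups of odd real projective spaces, the invariance of Gottlieb groups under homotopy equivalence, and the covering-space identifications of Section~\ref{sec.1}) really do assemble into the precise even/odd case split as stated, and in particular that the special values $n=0,1,3$ (i.e.\ $2n+1=1,3,7$) are exactly the ones for which $2\pi_{2n+1}=\pi_{2n+1}$ fails to be proper, matching Pak's list. No nontrivial homotopy-theoretic estimate is needed beyond what is already available; the argument is a concatenation of the structural results above with \cite{pak} and \cite{kwasik}.
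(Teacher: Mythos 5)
Your main line is exactly the paper's: the entire published proof consists of citing \cite[Lemma~2.5]{kwasik} for the homotopy equivalence $\Sigma(2n+1)/\mathbb{Z}_2\simeq\mathbb{R}P^{2n+1}$, noting the induced isomorphisms $G_m(\Sigma(2n+1)/\mathbb{Z}_2)\approx G_m(\mathbb{R}P^{2n+1})$ and $G_m(\Sigma(2n+1))\approx G_m(\mathbb{S}^{2n+1})$, and then quoting the Pak--Woo computation \cite{pak}. So your first paragraph essentially \emph{is} the proof.

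One caveat about your second paragraph: the equality $G_{2n+1}^{\gamma_{2n+1}}(\Sigma(2n+1)/\mathbb{Z}_2)=G_{2n+1}(\Sigma(2n+1)/\mathbb{Z}_2)$ does not follow from simplicity or from ``nothing below dimension $2n+1$ contributing''. In general one only has the inclusion $G_m(X)\subseteq G_m^{\gamma_{2n+1}}(X)=\gamma_{2n+1\ast}G_m(\Sigma(2n+1))$ (Remark~\ref{rem.homoteq} together with Corollary~\ref{CCC}), and the reverse inclusion can fail: the paper itself records, via Remark~\ref{mukai}, that conjecturally $G_{18}(\Sigma(11)/\mathbb{Z}_2)\subsetneqq\gamma_{18\ast}G_{18}(\Sigma(11))$. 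The first equality in Theorem~\ref{PW} is therefore not a structural consequence of the covering-space machinery but an a posteriori coincidence of two explicit computations: Pak--Woo give $G_{2n+1}(\mathbb{R}P^{2n+1})$ as $\pi_{2n+1}$ for $n=0,1,3$ and $2\pi_{2n+1}$ otherwise, while $\gamma_{2n+1\ast}G_{2n+1}(\mathbb{S}^{2n+1})$ is the same subgroup because $\gamma_{2n+1\ast}$ is an isomorphism on $\pi_{2n+1}$ and $G_{2n+1}(\mathbb{S}^{2n+1})$ has the matching description. Your first paragraph already supplies both sides of this comparison, so the fix is simply to replace the appeal to simplicity by that comparison; everything else stands.
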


In virtue of the inclusion $G_m(\Sigma(2n+1)/\mathbb{Z}_2)\subseteq {\gamma_{2n+1}}_*G_m(\Sigma(2n+1))$ for $m>1$
and the group ${\gamma_{2n+1}}_*G_m(\Sigma(2n+1))$ is an upper bound of  $G_m(\Sigma(2n+1)/\mathbb{Z}_2)$.
Now, \cite[Theorem~2.41]{juno-marek} implies:

\begin{theorem}
If $m\le 7$ then the equality \[G_{m+2n+1}(\Sigma(2n+1)/\mathbb{Z}_2)={\gamma_{2n+1}}_\ast G_{m+2n+1}(\Sigma(2n+1))\] holds
except for the following pairs: $(m,2n+1)=(3,2^l-3)$ with $l\ge 4$, $(6,2^l-5)$ with $l\geq 5$ and $(7,11)$. Furthermore:
\begin{enumerate}
\item $G_{2^l}(\Sigma(2^l-3)/\mathbb{Z}_2)\supseteq 2\pi_{2^l}(\Sigma(2^l-3)/\mathbb{Z}_2)$ for $l\ge 4$;

\item $G_{18}(\Sigma(11)/\mathbb{Z}_2)\supseteq 2\pi_{18}(\Sigma(11)/\mathbb{Z}_2)$.
\end{enumerate}
\end{theorem}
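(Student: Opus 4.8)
The plan is to exploit the two facts already in hand: first, the homotopy equivalence $\Sigma(2n+1)/\mathbb{Z}_2\simeq \mathbb{R}P^{2n+1}$ from \cite[Lemma~2.5]{kwasik}, which by Remark~\ref{rem.homoteq}\ref{rem.homoteq.1} (or rather its consequence for Gottlieb groups) reduces every assertion to a statement about $G_*(\mathbb{R}P^{2n+1})$ and the quotient map $\gamma_{2n+1}\colon \mathbb{S}^{2n+1}\to \mathbb{R}P^{2n+1}$; and second, the inclusion $G_{m+2n+1}(\Sigma(2n+1)/\mathbb{Z}_2)\subseteq \gamma_{2n+1\ast}G_{m+2n+1}(\Sigma(2n+1))$ for $m>1$, which gives the ``$\subseteq$'' half of the desired equality for free, so that only the reverse inclusion — i.e.\ that every Gottlieb element of the sphere pushes down to a Gottlieb element of the space form — needs to be established in the non-exceptional range.

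The engine for the reverse inclusion is \cite[Theorem~2.41]{juno-marek}, which (via the identification with $\mathbb{R}P^{2n+1}$) determines $G_{m+2n+1}(\mathbb{R}P^{2n+1})$ and compares it with $\gamma_{2n+1\ast}\pi_{m+2n+1}(\mathbb{S}^{2n+1})$ for $m\le 7$. So the first step is to translate that theorem's statement into the $\Sigma$-notation and read off precisely for which pairs $(m,2n+1)$ the comparison map fails to be onto; these are the listed exceptions $(3,2^l-3)$ with $l\ge4$, $(6,2^l-5)$ with $l\ge5$, and $(7,11)$. For all remaining pairs the cited theorem directly yields $G_{m+2n+1}(\mathbb{R}P^{2n+1})=\gamma_{2n+1\ast}\pi_{m+2n+1}(\mathbb{S}^{2n+1})$ — but one must also check that $\gamma_{2n+1\ast}\pi_{m+2n+1}(\mathbb{S}^{2n+1})$ actually coincides with $\gamma_{2n+1\ast}G_{m+2n+1}(\mathbb{S}^{2n+1})$ in those cases, i.e.\ that the discrepancy between $G_*$ and $\pi_*$ of the sphere (governed by Whitehead products $[\iota_{2n+1},-]$, and by $2\pi_*$ for $2n+1\neq 1,3,7$) is either trivial or already killed by $\gamma_{2n+1\ast}$. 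Here one uses that $\gamma_{2n+1\ast}$ kills the class of the attaching map of the top cell of $\mathbb{R}P^{2n+1}$, which is essentially the relevant Whitehead-type element, so that $\gamma_{2n+1\ast}G_{m+2n+1}(\mathbb{S}^{2n+1})=\gamma_{2n+1\ast}\pi_{m+2n+1}(\mathbb{S}^{2n+1})$ in the non-exceptional range — this is the routine but necessary bookkeeping.

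For the two explicit lower bounds (1) and (2) in the exceptional cases $2n+1=2^l-3$ (so $m+2n+1=2^l$) and $(m,2n+1)=(7,11)$ (so $m+2n+1=18$), the plan is to produce a concrete subgroup of Gottlieb elements. The natural candidate is $2\pi_{m+2n+1}(\Sigma(2n+1)/\mathbb{Z}_2)$: since $2[\iota_{2n+1},\iota_{2n+1}]=0$ for odd $2n+1$, every element of $2\pi_{m+2n+1}(\mathbb{S}^{2n+1})$ lies in $G_{m+2n+1}(\mathbb{S}^{2n+1})$ when $2n+1\neq 1,3,7$, and one then invokes Proposition~\ref{l1} ($p_*^{-1}(G_m(X))\subseteq G_m(\tilde X)$) together with the fact that $\gamma_{2n+1\ast}$ is surjective on homotopy in dimensions $>2n+1$ and that $2\pi_{m+2n+1}(\mathbb{R}P^{2n+1})$ is the image under $\gamma_{2n+1\ast}$ of $2\pi_{m+2n+1}(\mathbb{S}^{2n+1})$, to conclude $2\pi_{m+2n+1}(\Sigma(2n+1)/\mathbb{Z}_2)\subseteq G_{m+2n+1}(\Sigma(2n+1)/\mathbb{Z}_2)$. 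Alternatively one reads (1) and (2) straight off the corresponding lower-bound statements in \cite[Theorem~2.41]{juno-marek} after transporting along the homotopy equivalence.

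The main obstacle I anticipate is not conceptual but the careful determination, pair by pair for $2\le m\le 7$, of exactly when $\gamma_{2n+1\ast}G_{m+2n+1}(\mathbb{S}^{2n+1})$ equals $G_{m+2n+1}(\mathbb{R}P^{2n+1})$; this requires knowing the relevant homotopy groups of spheres in the stable-and-slightly-unstable range (Toda's tables), the precise description of which elements of $\pi_{m+2n+1}(\mathbb{S}^{2n+1})$ are Whitehead products with $\iota_{2n+1}$, and the behaviour of $\gamma_{2n+1\ast}$ on them — in short, reproducing or quoting in detail the case analysis underlying \cite[Theorem~2.41]{juno-marek}. The exceptional families $2^l-3$ and $2^l-5$ arise exactly from the dimensions where the EHP-type obstructions (related to the Hopf invariant one / Kervaire invariant phenomena) prevent the comparison map from being surjective, and pinning down that these are the only exceptions for $m\le 7$ is where the real work lies.
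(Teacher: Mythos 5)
Your proposal matches the paper's argument: the paper likewise reduces to $\mathbb{R}P^{2n+1}$ via the homotopy equivalence $\Sigma(2n+1)/\mathbb{Z}_2\simeq\mathbb{R}P^{2n+1}$ of Jahren--Kwasik, notes the upper bound $G_{m+2n+1}(\Sigma(2n+1)/\mathbb{Z}_2)\subseteq\gamma_{2n+1\ast}G_{m+2n+1}(\Sigma(2n+1))$, and then quotes \cite[Theorem~2.41]{juno-marek} for the case-by-case content, including the exceptional pairs and the lower bounds (1) and (2). The extra bookkeeping you flag is exactly what is delegated to that cited theorem, so your route is essentially the same as the paper's.
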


\begin{remark}\label{mukai}
According to the J.\ Mukai's conjecture, \[G_{18}(\Sigma(11)/\mathbb{Z}_2)= 2\pi_{18}(\Sigma(11)/\mathbb{Z}_2)=2\gamma_{18*}\pi_{18}(\Sigma(11))=2\gamma_{18*}G_{18}(\Sigma(11)).\] This implies that there is the proper inclusion $G_{18}(\Sigma(11)/\mathbb{Z}_2)\subsetneqq \gamma_{18*}G_{18}(\Sigma(11))$.
\end{remark}

Next, \cite[Proposition~2.42]{juno-marek} leads to:
\begin{proposition}\label{8910}
$G_{m+2n+1}(\Sigma(2n+1)/\mathbb{Z}_2)=\pi_{m+2n+1}(\Sigma(2n+1)/\mathbb{Z}_2)$ if $n\equiv 1 \pmod 2$ and $m=8,9$,
 $n\equiv 1 \pmod 2$ with $n\ge 3$ and $m=10$,  $n\equiv 0 \pmod 2$, $n \equiv 3 \pmod 4$ with $n\ge 6$ and $m=11$, or
$n\ge 7$ and $m=13$.
\end{proposition}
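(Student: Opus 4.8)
The plan is to transport the question to real projective space, where the matching computation is already available in \cite{juno-marek}, and then to import it. Recall from the opening of this section that a free action $\mathbb{Z}_2\times\Sigma(2n+1)\to\Sigma(2n+1)$ yields, by \cite[Lemma~2.5]{kwasik}, a based homotopy equivalence $\Sigma(2n+1)/\mathbb{Z}_2\simeq\mathbb{R}P^{2n+1}$. Since $G_{m+2n+1}(-)$, $\pi_{m+2n+1}(-)$ and the natural inclusion $G_{m+2n+1}(-)\hookrightarrow\pi_{m+2n+1}(-)$ are all homotopy invariant (cf.\ Remark~\ref{rem.homoteq}), this equivalence identifies the inclusion $G_{m+2n+1}(\Sigma(2n+1)/\mathbb{Z}_2)\hookrightarrow\pi_{m+2n+1}(\Sigma(2n+1)/\mathbb{Z}_2)$ with the inclusion $G_{m+2n+1}(\mathbb{R}P^{2n+1})\hookrightarrow\pi_{m+2n+1}(\mathbb{R}P^{2n+1})$. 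Hence it suffices to show that $G_{m+2n+1}(\mathbb{R}P^{2n+1})=\pi_{m+2n+1}(\mathbb{R}P^{2n+1})$ in each of the ranges of $(m,n)$ listed in the statement.

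For this last equality I would appeal directly to \cite[Proposition~2.42]{juno-marek}, which establishes exactly $G_{m+2n+1}(\mathbb{R}P^{2n+1})=\pi_{m+2n+1}(\mathbb{R}P^{2n+1})$ for $m=8,9,10,11,13$ under precisely the parity, congruence and range restrictions on $n$ recorded here; transporting this back across $\Sigma(2n+1)/\mathbb{Z}_2\simeq\mathbb{R}P^{2n+1}$ gives $G_{m+2n+1}(\Sigma(2n+1)/\mathbb{Z}_2)=\pi_{m+2n+1}(\Sigma(2n+1)/\mathbb{Z}_2)$, which is the assertion. One may note in passing that, together with the inclusion $G_{m+2n+1}(\Sigma(2n+1)/\mathbb{Z}_2)\subseteq{\gamma_{2n+1}}_\ast G_{m+2n+1}(\Sigma(2n+1))$ coming from Proposition~\ref{l1}, the statement forces $G_{m+2n+1}(\Sigma(2n+1))=\pi_{m+2n+1}(\Sigma(2n+1))$ in these same ranges, so that what is really being asserted is the \emph{sharpness}, for these $(m,n)$, of that general upper bound.

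The substantive content — entirely absorbed into \cite[Proposition~2.42]{juno-marek} and not reproved here — is what I expect to be the main obstacle to any self-contained argument. It amounts to deciding for which $(m,n)$ one has $[\iota_{2n+1},\iota_{2n+1}]\circ E^{2n}\beta=0$ for every $\beta\in\pi_{m+2n+1}(\mathbb{S}^{2n+1})$, i.e.\ for which $(m,n)$ the subgroup $\operatorname{Ker}[\iota_{2n+1},-]$ — which by Remark~\ref{rem.homoteq}\ref{rem.homoteq.5} is exactly $G_{m+2n+1}(\mathbb{S}^{2n+1})$ — fills out $\pi_{m+2n+1}(\mathbb{S}^{2n+1})$, and then to check that this Gottlieb-ness descends along the double cover $\mathbb{S}^{2n+1}\to\mathbb{R}P^{2n+1}$. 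The parity and congruence constraints on $n$, together with the lower bounds such as $n\ge 7$ when $m=13$, are dictated by exceptional EHP behaviour in low dimensions and by the Hopf-invariant-one dimensions $2n+1\in\{1,3,7\}$, while the relevant group computations are read off from Toda's tables. Granting that, the proposition follows purely by the homotopy-invariance translation above.
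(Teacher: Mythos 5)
Your proposal is correct and follows essentially the same route as the paper: Section~\ref{sec.2} opens by invoking the homotopy equivalence $\Sigma(2n+1)/\mathbb{Z}_2\simeq\mathbb{R}P^{2n+1}$ from \cite[Lemma~2.5]{kwasik} together with the homotopy invariance of Gottlieb groups, and the proposition is then obtained exactly as you describe, by reading off \cite[Proposition~2.42]{juno-marek} for $\mathbb{R}P^{2n+1}$ in the stated ranges of $(m,n)$. Your closing observations about the upper bound ${\gamma_{2n+1}}_\ast G_{m+2n+1}(\Sigma(2n+1))$ and the content hidden inside the cited result are consistent with the paper's own discussion and do not change the argument.
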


To study $G_m(\mathbb{S}^{2n+1}/\mathbb{Z}_l)$ for $l>2$ we need the following. Let $H$ be a closed subgroup of a Lie group $K$ and  write $K/H$ for the associated orbit space.
Then, \cite[Theorem~II.5]{lang} yields:
\begin{lemma}\label{SL}
The projection map $p\colon  K\to K/H$  leads to $p_*(\pi_m(K))\subseteq G_m(K/H)$ for $m\geq 1$.
\end{lemma}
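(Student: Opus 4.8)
The plan is to exploit the natural left action of the Lie group $K$ on the homogeneous space $K/H$, which is the quotient of the continuous multiplication $K\times K\to K$ by the right $H$-action on the second factor and hence is itself continuous. Fix $\alpha\in\pi_m(K)$ and choose a representative $a\colon \mathbb{S}^m\to K$; since the base point of $K$ is the identity $e$, we may (and do) assume $a(s_0)=e$, where $s_0\in\mathbb{S}^m$ is the base point. Define
\[
F\colon (K/H)\times \mathbb{S}^m\longrightarrow K/H,\qquad F(xH,s)=\bigl(a(s)\,x\bigr)H .
\]
This is well defined and continuous because it is induced by the continuous map $K\times \mathbb{S}^m\to K/H$, $(x,s)\mapsto (a(s)x)H$, which is constant on right $H$-cosets in the $K$-variable.

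Next I would verify the two restrictions that identify $F$ as the desired extension. Restricting to $(K/H)\times\{s_0\}$ gives $xH\mapsto (a(s_0)x)H=(ex)H=xH$, i.e. $F|_{K/H}=\mathrm{id}_{K/H}$. Restricting to $\{eH\}\times\mathbb{S}^m$ gives $s\mapsto (a(s)e)H=a(s)H=p(a(s))$, which is precisely a representative of $p_\ast(\alpha)\in\pi_m(K/H)$. Thus $F$ restricted to $(K/H)\vee\mathbb{S}^m$ equals $\mathrm{id}_{K/H}\vee p_\ast(\alpha)$, and by the definition of the Gottlieb group this shows $p_\ast(\alpha)\in G_m(K/H)$. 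As $\alpha$ was arbitrary, $p_\ast(\pi_m(K))\subseteq G_m(K/H)$ for every $m\ge 1$.

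The argument is essentially the classical observation that one-parameter (here $\mathbb{S}^m$-parameter) families of translations in a topological group supply cyclic homotopies in Gottlieb's sense. The only points requiring a word of care are the normalization $a(s_0)=e$ (immediate from the choice of base point in $K$), the continuity and coset-independence of $F$, and the matching of base points so that $F$ is a genuine based extension; none of these is a real obstacle. If one prefers a formulation closer to the evaluation-map description recalled at the start of Section~\ref{sec.1}, the same map $F$ yields, via its adjoint, a map $K/H\to (K/H)^{K/H}$ through which $p\circ a$ factors after composing with $\mathrm{ev}$, so $p_\ast(\alpha)\in\operatorname{Im}(\mathrm{ev}_\ast)=G_m(K/H)$.
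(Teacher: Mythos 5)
Your proof is correct and is precisely the classical translation argument underlying the result the paper invokes: the paper gives no proof of Lemma~\ref{SL}, citing instead Lang's Theorem~II.5, whose proof is this same construction of an affiliated (cyclic) map from the left $K$-action on $K/H$. All the details you flag --- well-definedness and continuity of $F$ via the open quotient $K\to K/H$, and the normalization $a(s_0)=e$ --- check out, and you in fact produce a genuine extension of $\mathrm{id}_{K/H}\vee p_{*}(\alpha)$ rather than one only up to homotopy.
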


If $H$ is a finite subgroup of $K$ then Proposition~\ref{l1} and Lemma~\ref{SL} imply $p_*(\pi_m(K))= G_m(K/H)$ for $m> 1$. In particular, if $H$ is a finite subgroup of $\mathbb{S}^3$ then $p_*(\pi_m(\mathbb{S}^3))= G_m(\mathbb{S}^3/H)$ for $m> 1$.

Let now $U(n)$ be the $n$\textsuperscript{th} unitary group and write $i'_n\colon  U(n)\times \mathbb{Z}_l\hookrightarrow U(n)\times U(1)\hookrightarrow U(n+1)$ for the canonical inclusion map.
Then, we may identify $U(n+1)/U(n)=\mathbb{S}^{2n+1}$ and $U(n+1)/U(n)\times \mathbb{Z}_l=\mathbb{S}^{2n+1}/\mathbb{Z}_l$.
Set $p_n \colon  U(n+1)\to \mathbb{S}^{2n+1}$ and $p'_n\colon  U(n+1)\to \mathbb{S}^{2n+1}/\mathbb{Z}_l$ for the appropriate  quotient maps.
Next,  consider the map of exact sequences induced by the fibrations $U(n+1)\xrightarrow{U(n)} \mathbb{S}^{2n+1}$ and
$U(n+1)\xrightarrow{U(n)\times \mathbb{Z}_l} \mathbb{S}^{2n+1}/\mathbb{Z}_l$:
\begin{equation*}\label{comg1}
\begin{gathered}
\xymatrix@C=1cm{
\pi_m(U(n+1))\ar@{=}[d]\ar[r]^-{p_\ast}&
\pi_m(\mathbb{S}^{2n+1})\ar[d]^{{\gamma_{2n+1}}_\ast}\ar[r]^-{\Delta_{\mathbb{C}}}&\pi_{m-1}(U(n))\ar@{^{(}->}[d]\\
\pi_m(U(n+1))\ar[r]_-{p'_\ast}&\pi_m(\mathbb{S}^{2n+1}/\mathbb{Z}_l)\ar[r]_-{\Delta'_{\mathbb{C}}}&\pi_{m-1}(U(n)\times \mathbb{Z}_l) \rlap{.} }
\end{gathered}\end{equation*}
Then, by Lemma~\ref{SL}, we have:
\begin{equation*}\label{sl1}
\operatorname{Ker} \Delta'_\mathbb{C}=p'_*\pi_m(U(n+1))\subseteq G_m(\mathbb{S}^{2n+1}/\mathbb{Z}_l).
\end{equation*}

Let $J\colon \pi_m(SO(n))\to \pi_{m+n}(\mathbb{S}^n)$ be the $J$-homomorphism and  $r\colon U(n) \hookrightarrow SO(2n)$ be the canonical inclusion. Taking $J_{\mathbb{C}}=J\circ r_*$, the result \cite[Lemma~2.33]{juno-marek} implies:

\begin{proposition}\label{fund1}
\begin{enumerate}\thmenumhspace
\item  $\operatorname{Ker} \bigl( \Delta_\mathbb{C}\colon  \pi_m(\mathbb{S}^{2n+1})\to\pi_{m-1}(U(n))\bigr)
\subseteq \gamma_{2n+1\ast}^{-1} G_m(\mathbb{S}^{2n+1}/\mathbb{Z}_l)$.
In particular, it holds $\gamma_{2n+1\ast}\pi_m(\mathbb{S}^{2n+1};p)\subseteq G_m(\mathbb{S}^{2n+1}/\mathbb{Z}_l)$
provided $\pi_{m-1}(U(n);p)=0$ for a prime $p$.

\item  Let $m\ge 3$. If $E\circ {J_\mathbb{C}}_{|\Delta_{\mathbb{C}}(\pi_m(\mathbb{S}^{2n+1}))} \colon  \Delta_{\mathbb{C}}(\pi_m(\mathbb{S}^{2n+1}))\to \pi_{m+2n}(\mathbb{S}^{2n+1})$
is a monomorphism then $\gamma_{2n+1\ast}G_m(\mathbb{S}^{2n+1})\subseteq G_m(\mathbb{S}^{2n+1}/\mathbb{Z}_l)$.
In particular, under the assumption before, $G_m(\mathbb{S}^{2n+1}/\mathbb{Z}_l)=\gamma_{2n+1\ast} G_m(\mathbb{S}^{2n+1})$.
\end{enumerate}
\end{proposition}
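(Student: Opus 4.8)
The plan is a short diagram chase on the ladder of long exact homotopy sequences displayed just before the statement, feeding on the inclusion $\operatorname{Ker}\Delta'_{\mathbb C}=p'_\ast\pi_m(U(n+1))\subseteq G_m(\mathbb{S}^{2n+1}/\mathbb{Z}_l)$ supplied by Lemma~\ref{SL}.

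For~(1) I would take $\xi\in\operatorname{Ker}\Delta_{\mathbb C}$. By exactness of the top row there is $\eta\in\pi_m(U(n+1))$ with $p_\ast\eta=\xi$, and commutativity of the left-hand square of the ladder gives $\gamma_{2n+1\ast}\xi=\gamma_{2n+1\ast}p_\ast\eta=p'_\ast\eta\in\operatorname{Ker}\Delta'_{\mathbb C}\subseteq G_m(\mathbb{S}^{2n+1}/\mathbb{Z}_l)$; hence $\xi\in\gamma_{2n+1\ast}^{-1}G_m(\mathbb{S}^{2n+1}/\mathbb{Z}_l)$, which is the asserted inclusion. The ``in particular'' is then immediate: since $\Delta_{\mathbb C}$ is a homomorphism of abelian groups, $\pi_{m-1}(U(n);p)=0$ forces $\Delta_{\mathbb C}(\pi_m(\mathbb{S}^{2n+1};p))\subseteq\pi_{m-1}(U(n);p)=0$, i.e.\ $\pi_m(\mathbb{S}^{2n+1};p)\subseteq\operatorname{Ker}\Delta_{\mathbb C}$, and applying $\gamma_{2n+1\ast}$ to the inclusion just proved yields $\gamma_{2n+1\ast}\pi_m(\mathbb{S}^{2n+1};p)\subseteq G_m(\mathbb{S}^{2n+1}/\mathbb{Z}_l)$.

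For~(2) the two ingredients are the identification $G_m(\mathbb{S}^{2n+1})=\operatorname{Ker}\!\big([\iota_{2n+1},-]\colon\pi_m(\mathbb{S}^{2n+1})\to\pi_{2n+m}(\mathbb{S}^{2n+1})\big)$, which is the case $A=X=\mathbb{S}^{2n+1}$, $f=\iota_{2n+1}$ of Remark~\ref{rem.homoteq}\ref{rem.homoteq.5}, together with the relation $[\iota_{2n+1},\alpha]=\pm E\big(J_{\mathbb C}(\Delta_{\mathbb C}\alpha)\big)$ coming from \cite[Lemma~2.33]{juno-marek}. Given $\alpha\in G_m(\mathbb{S}^{2n+1})$ one then has $0=[\iota_{2n+1},\alpha]=\pm E\big(J_{\mathbb C}(\Delta_{\mathbb C}\alpha)\big)$; since $\Delta_{\mathbb C}\alpha$ lies in $\Delta_{\mathbb C}(\pi_m(\mathbb{S}^{2n+1}))$, the hypothesis that the restriction of $E\circ J_{\mathbb C}$ to that subgroup is a monomorphism forces $\Delta_{\mathbb C}\alpha=0$, i.e.\ $\alpha\in\operatorname{Ker}\Delta_{\mathbb C}$, and part~(1) yields $\gamma_{2n+1\ast}\alpha\in G_m(\mathbb{S}^{2n+1}/\mathbb{Z}_l)$. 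This proves $\gamma_{2n+1\ast}G_m(\mathbb{S}^{2n+1})\subseteq G_m(\mathbb{S}^{2n+1}/\mathbb{Z}_l)$. For the closing equality I would combine this with the reverse inclusion $G_m(\mathbb{S}^{2n+1}/\mathbb{Z}_l)\subseteq\gamma_{2n+1\ast}G_m(\mathbb{S}^{2n+1})$, obtained from Proposition~\ref{l1} applied to the covering $\gamma_{2n+1}$ together with the fact that $\gamma_{2n+1\ast}$ is an isomorphism for $m>1$.

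I expect no serious obstacle, since everything except one step is formal. The single step carrying genuine content is the Whitehead-product relation $[\iota_{2n+1},\alpha]=\pm E\big(J_{\mathbb C}(\Delta_{\mathbb C}\alpha)\big)$, which I would invoke from \cite[Lemma~2.33]{juno-marek} rather than reprove; the only care needed there is the degree bookkeeping ($\Delta_{\mathbb C}\alpha\in\pi_{m-1}(U(n))$, $r_\ast\Delta_{\mathbb C}\alpha\in\pi_{m-1}(SO(2n))$, $J_{\mathbb C}(\Delta_{\mathbb C}\alpha)\in\pi_{m-1+2n}(\mathbb{S}^{2n})$, $E(J_{\mathbb C}(\Delta_{\mathbb C}\alpha))\in\pi_{m+2n}(\mathbb{S}^{2n+1})$) and the remark that the restriction $m\ge3$ is exactly what makes that relation available in the form used.
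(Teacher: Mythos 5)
Your proposal is correct and follows essentially the route the paper intends: the paper gives no written proof, but sets up exactly the ingredients you use (the ladder of exact sequences with $\operatorname{Ker}\Delta'_{\mathbb C}=p'_\ast\pi_m(U(n+1))\subseteq G_m(\mathbb{S}^{2n+1}/\mathbb{Z}_l)$ from Lemma~\ref{SL}, the identification $G_m(\mathbb{S}^{2n+1})=\operatorname{Ker}[\iota_{2n+1},-]$, the relation $[\iota_{2n+1},\alpha]=\pm E J_{\mathbb C}\Delta_{\mathbb C}\alpha$ from \cite[Lemma~2.33]{juno-marek}, and Proposition~\ref{l1} for the reverse inclusion). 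Your diagram chase and the $p$-primary observation for the ``in particular'' are exactly the omitted details.
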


Next, we make use of
\cite[Theorem~2.45]{juno-marek} to get:
\begin{proposition}\label{GCP}
\begin{enumerate}\thmenumhspace
\item Let $m=1,2$. Then,
\begin{align*}
G_{m+2n+1}(\mathbb{S}^{2n+1}/\mathbb{Z}_l)
&=
\begin{cases}
0, & \text{if $n$ is even,}\\
\pi_{m+2n+1}(\mathbb{S}^{2n+1}/\mathbb{Z}_l)\approx\mathbb{Z}_2, & \text{if $n$ is odd.}
\end{cases}
\end{align*}

\item $G_{2n+4}(\mathbb{S}^{2n+1}/\mathbb{Z}_l)\supseteq$
\begin{align*}
 \begin{cases}
(24,n)\pi_{2n+4}(\mathbb{S}^{2n+1}/\mathbb{Z}_l)\approx\mathbb{Z}_{\frac{24}{(24,n)}}, & \text{if $n$ is even,}\\
\frac{(24,n+3)}{2}\pi_{2n+4}(\mathbb{S}^{2n+1}/\mathbb{Z}_l)\approx\mathbb{Z}_{\frac{48}{(24,n+3)}},
& \text{if $n$ is odd.}
\end{cases}
\end{align*}
In particular,  $G_{2n+4}(\mathbb{S}^{2n+1}/\mathbb{Z}_l)=2\pi_{2n+4}(\mathbb{S}^{2n+1}/\mathbb{Z}_l)$ for 
$n\equiv 2,10 \pmod{12}$ with $n\geq 10$
except $n=2^{i-1}-2$ or $n\equiv 1,17\pmod{24}$ with $n\ge 17$, and $G_{2n+4}(\mathbb{S}^{2n+1}/\mathbb{Z}_l)=\pi_{2n+4}(\mathbb{S}^{2n+1}/\mathbb{Z}_l)$ for  $n\equiv 7,11\pmod{12}$.

\item $G_{2n+6}(\mathbb{S}^{2n+1}/\mathbb{Z}_l)
=\pi_{2n+6}(\mathbb{S}^{2n+1}/\mathbb{Z}_l)
\approx
\begin{cases}
0, & \text{for $n\geq 3$,}\\
\mathbb{Z}_2, & \text{for $n=2$.}
\end{cases} $

\item $G_{2n+7}(\mathbb{S}^{2n+1}/\mathbb{Z}_l)=\pi_{2n+7}
(\mathbb{S}^{2n+1}/\mathbb{Z}_l)$ for $n\equiv 2,3\pmod 4$.
\end{enumerate}
\end{proposition}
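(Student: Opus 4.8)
The plan is to squeeze $G_{m+2n+1}(\mathbb{S}^{2n+1}/\mathbb{Z}_l)$ between the two bounds already assembled in Section~\ref{sec.2}. From above, $G_{m+2n+1}(\mathbb{S}^{2n+1}/\mathbb{Z}_l)\subseteq\gamma_{2n+1\ast}G_{m+2n+1}(\mathbb{S}^{2n+1})$ for $m+2n+1>1$, which follows from Proposition~\ref{l1} applied to the covering $\gamma_{2n+1}\colon\mathbb{S}^{2n+1}\to\mathbb{S}^{2n+1}/\mathbb{Z}_l$ (using that $\gamma_{2n+1\ast}$ is an isomorphism in these degrees). From below, Proposition~\ref{fund1}(1) gives $\gamma_{2n+1\ast}(\operatorname{Ker}\Delta_\mathbb{C})\subseteq G_{m+2n+1}(\mathbb{S}^{2n+1}/\mathbb{Z}_l)$, and Proposition~\ref{fund1}(2) closes the gap, yielding the equality $G_{m+2n+1}(\mathbb{S}^{2n+1}/\mathbb{Z}_l)=\gamma_{2n+1\ast}G_{m+2n+1}(\mathbb{S}^{2n+1})$, precisely when $E\circ J_\mathbb{C}$ restricts to a monomorphism on $\Delta_\mathbb{C}(\pi_{m+2n+1}(\mathbb{S}^{2n+1}))$. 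Thus the whole statement reduces to: (a) identifying, in the degrees $m+2n+1\in\{2n+2,2n+3,2n+4,2n+6,2n+7\}$, the groups $\pi_{m+2n+1}(\mathbb{S}^{2n+1})$, $G_{m+2n+1}(\mathbb{S}^{2n+1})$ and $\operatorname{Ker}\Delta_\mathbb{C}$; and (b) deciding the injectivity of $E\circ J_\mathbb{C}$ on $\operatorname{Im}\Delta_\mathbb{C}$.

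Step~(a) is exactly the content of \cite[Theorem~2.45]{juno-marek}, read together with the tables of \cite{toda}. For $m=1,2$ the relevant stem of $\mathbb{S}^{2n+1}$ is $\mathbb{Z}_2$ and $\Delta_\mathbb{C}$ does or does not vanish on its generator according to the parity of $n$, which forces the dichotomy of part~(1). For $m=3$ the relevant summand of $\pi_{2n+4}(\mathbb{S}^{2n+1})$ is a quotient of $\mathbb{Z}_{24}\cong\pi_3^S$ and $\operatorname{Ker}\Delta_\mathbb{C}$ there is controlled by the image of the $J$-homomorphism, whence the factors $(24,n)$ and $(24,n+3)$ of part~(2). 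For $m=5,6$ one substitutes $\pi_{2n+6}(\mathbb{S}^{2n+1})$, $\pi_{2n+7}(\mathbb{S}^{2n+1})$ and the corresponding boundary maps to obtain parts~(3) and~(4). Throughout, the parity of $n$ enters via $\pi_{m+2n}(U(n))$, the small cases $n=2,3$ are treated separately (there the homotopy of $U(n)$ and $\mathbb{S}^{2n+1}$ is still unstable), and the displayed cyclic groups $\mathbb{Z}_{24/(24,n)}$, $\mathbb{Z}_{48/(24,n+3)}$, $\mathbb{Z}_2$, $0$ follow by transporting orders through the isomorphism $\gamma_{2n+1\ast}$.

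The main obstacle is step~(b): verifying that $E\circ J_\mathbb{C}$ is monic on $\Delta_\mathbb{C}(\pi_{m+2n+1}(\mathbb{S}^{2n+1}))$ is a composition-theoretic computation in the homotopy of spheres and unitary groups localized at $2$ and $3$, and it is precisely the \emph{failure} of that injectivity which carves the exceptional families $n=2^{i-1}-2$ and $n\equiv1,17\pmod{24}$ with $n\ge17$ out of the equality in part~(2). On those families Proposition~\ref{fund1}(2) is unavailable, so only the lower bound $\gamma_{2n+1\ast}(\operatorname{Ker}\Delta_\mathbb{C})$ survives and one records the weaker ``$\supseteq$'' assertions; everywhere else the squeeze is sharp and delivers the stated equalities, parts~(1), (3) and~(4) being the degrees where the criterion holds (or is vacuous) for every relevant $n$.
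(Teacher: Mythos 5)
Your proposal follows the paper's route exactly: the paper obtains this proposition by sandwiching $G_{m+2n+1}(\mathbb{S}^{2n+1}/\mathbb{Z}_l)$ between the upper bound $\gamma_{2n+1\ast}G_{m+2n+1}(\mathbb{S}^{2n+1})$ coming from Proposition~\ref{l1} and the lower bound $\gamma_{2n+1\ast}(\operatorname{Ker}\Delta_{\mathbb{C}})$ of Proposition~\ref{fund1}, closing the gap via the monomorphism criterion for $E\circ J_{\mathbb{C}}$, with all stem-by-stem computations delegated to \cite[Theorem~2.45]{juno-marek} and Toda's tables, which is precisely what you describe. One small correction to your gloss on part (2): the exclusion of $n=2^{i-1}-2$ comes from the indeterminacy of $G_{2n+4}(\mathbb{S}^{2n+1})$ itself (the exceptional pair $(3,2^l-3)$ already flagged earlier in Section~\ref{sec.2}), i.e.\ from the upper bound rather than from a failure of injectivity of $E\circ J_{\mathbb{C}}$, and the family $n\equiv 1,17\pmod{24}$ with $n\ge 17$ is one where the equality $G=2\pi$ \emph{does} hold (it is disjoint by parity from $n\equiv 2,10\pmod{12}$), not a further exception.
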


Further, the result \cite[Proposition~2.46]{juno-marek} yields:
\begin{proposition}
$G_m(\mathbb{S}^{7}/\mathbb{Z}_l;2)=\pi_m(\mathbb{S}^{7}/\mathbb{Z}_l;2)$ for $8\leq m\leq 24$ unless $m=15,21$.
Further, $\gamma_{7\ast}\{\sigma'\eta_{14},\bar{\nu}_7+\varepsilon_7\}\subseteq G_{15}(\mathbb{S}^{7}/\mathbb{Z}_l;2)$ and
$\gamma_{7\ast}\{\sigma'\sigma_{14}\}\subseteq G_{21}(\mathbb{S}^{7}/\mathbb{Z}_l;2)$.
\end{proposition}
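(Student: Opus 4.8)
The plan is to reduce everything to computations for spheres already recorded in \cite{juno-marek}, using the covering map $\gamma_7\colon\mathbb{S}^7\to\mathbb{S}^7/\mathbb{Z}_l$ together with Proposition~\ref{fund1}. First I would recall that $\mathbb{S}^7/\mathbb{Z}_l=U(4)/(U(3)\times\mathbb{Z}_l)$, so the boundary homomorphism $\Delta_\mathbb{C}\colon\pi_m(\mathbb{S}^7)\to\pi_{m-1}(U(3))$ from the fibration $U(4)\to\mathbb{S}^7$ controls the image $p'_*\pi_m(U(4))=\operatorname{Ker}\Delta'_\mathbb{C}$, which by Lemma~\ref{SL} lies in $G_m(\mathbb{S}^7/\mathbb{Z}_l)$. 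The key point is that for the $2$-primary components and $8\le m\le 24$, the group $\pi_{m-1}(U(3);2)$ is small enough (indeed, by Bott periodicity and the stable/unstable range, $\pi_{m-1}(U(3))$ is known), so that Proposition~\ref{fund1}(1) gives $\gamma_{7*}\pi_m(\mathbb{S}^7;2)\subseteq G_m(\mathbb{S}^7/\mathbb{Z}_l)$ precisely when $\pi_{m-1}(U(3);2)=0$; combined with $G_m(\mathbb{S}^7/\mathbb{Z}_l)\subseteq\gamma_{7*}\pi_m(\mathbb{S}^7;2)\subseteq\pi_m(\mathbb{S}^7/\mathbb{Z}_l;2)$ (from Proposition~\ref{l1} applied to the covering, and the fact that $\gamma_{7*}$ is an isomorphism on $\pi_m$ for $m>1$), this forces $G_m(\mathbb{S}^7/\mathbb{Z}_l;2)=\pi_m(\mathbb{S}^7/\mathbb{Z}_l;2)$.

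Second, I would isolate the exceptional degrees $m=15$ and $m=21$, where $\pi_{m-1}(U(3);2)\neq 0$, so the crude argument fails. Here the strategy is to invoke part (2) of Proposition~\ref{fund1}: one needs to understand the composite $E\circ J_\mathbb{C}$ restricted to $\Delta_\mathbb{C}(\pi_m(\mathbb{S}^7))$, and more concretely to exhibit explicit elements of $\pi_{15}(\mathbb{S}^7)$ and $\pi_{21}(\mathbb{S}^7)$ that survive to $G_*(\mathbb{S}^7)$ and whose images under $\gamma_{7*}$ survive to $G_*(\mathbb{S}^7/\mathbb{Z}_l)$. The relevant elements are named in \cite[Proposition~2.46]{juno-marek}: in degree $15$ the classes $\sigma'\eta_{14}$ and $\bar\nu_7+\varepsilon_7$, and in degree $21$ the class $\sigma'\sigma_{14}$; one checks via \cite[Proposition~2.46]{juno-marek} that these lie in $G_{15}(\mathbb{S}^7;2)$ and $G_{21}(\mathbb{S}^7;2)$ and that the corresponding Whitehead-product obstructions to $\gamma_{7*}$ of them lying in $G_*(\mathbb{S}^7/\mathbb{Z}_l)$ vanish, which gives the claimed inclusions $\gamma_{7*}\{\sigma'\eta_{14},\bar\nu_7+\varepsilon_7\}\subseteq G_{15}(\mathbb{S}^7/\mathbb{Z}_l;2)$ and $\gamma_{7*}\{\sigma'\sigma_{14}\}\subseteq G_{21}(\mathbb{S}^7/\mathbb{Z}_l;2)$.

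The main obstacle is bookkeeping the $2$-primary homotopy groups $\pi_{m-1}(U(3))$ for $7\le m-1\le 23$ and matching each vanishing (or non-vanishing) against Toda's tables for $\pi_{m}(\mathbb{S}^7)$; the degrees $15$ and $21$ are exactly where $\pi_{m-1}(U(3);2)$ is nonzero (coming from $\pi_{14}(U(3))$ and $\pi_{20}(U(3))$), so those must be handled separately. Once the translation of \cite[Proposition~2.46]{juno-marek} through Proposition~\ref{fund1} is done carefully, the remaining work is routine: it is a matter of reading off which degrees are clean and transporting the two exceptional families via $\gamma_{7*}$. I do not expect any genuinely new homotopy-theoretic input beyond what Proposition~\ref{fund1} and \cite{juno-marek} already supply.
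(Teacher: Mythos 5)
Your overall framework is the right one---and it is the paper's: the proposition is obtained by pushing \cite[Proposition~2.46]{juno-marek} through the comparison of the fibrations $U(4)\to\mathbb{S}^7$ and $U(4)\to\mathbb{S}^7/\mathbb{Z}_l$, i.e., through Lemma~\ref{SL} and Proposition~\ref{fund1} (the paper itself gives no detail beyond the citation). However, the mechanism you propose for the main range is wrong. You claim that for $8\le m\le 24$ with $m\ne 15,21$ one has $\pi_{m-1}(U(3);2)=0$, so that the ``in particular'' clause of Proposition~\ref{fund1}(1) applies. That is false for most $m$ in this range: for example $\pi_8(U(3))\approx\pi_8(SU(3))\approx\mathbb{Z}_{12}$, so $\pi_8(U(3);2)\approx\mathbb{Z}_4\neq 0$, yet $m=9$ is one of the ``clean'' degrees; the groups $\pi_{11}(U(3))$ and $\pi_{12}(U(3))$ likewise have nontrivial $2$-components. (Bott periodicity does not help: $\pi_{m-1}(U(3))$ is stable only for $m-1\le 5$, and the whole range $7\le m-1\le 23$ is unstable.) The vanishing condition in Proposition~\ref{fund1}(1) is only sufficient, not an ``if and only if''; the actual content of \cite[Proposition~2.46]{juno-marek} is a computation of $\Delta_{\mathbb{C}}$ on the $2$-primary generators of $\pi_m(\mathbb{S}^7)$ themselves, via $\Delta_{\mathbb{C}}(E\beta)=\Delta_{\mathbb{C}}(\iota_7)\circ\beta$ and Toda's composition tables, showing that these elements lie in $\operatorname{Ker}\Delta_{\mathbb{C}}$ even though the target group is nonzero. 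Without that input your argument already breaks down at $m=9$.

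The second half of your proposal (the exceptional degrees $m=15,21$) is closer in spirit: the classes $\sigma'\eta_{14}$, $\bar{\nu}_7+\varepsilon_7$ and $\sigma'\sigma_{14}$ are precisely the elements one shows to lie in $\operatorname{Ker}\Delta_{\mathbb{C}}=p'_*\pi_m(U(4))\subseteq G_m(\mathbb{S}^7/\mathbb{Z}_l)$ by Lemma~\ref{SL}. But again the issue is the vanishing of $\Delta_{\mathbb{C}}$ on those particular elements, not ``Whitehead-product obstructions to $\gamma_{7*}$'' nor the monomorphism hypothesis of Proposition~\ref{fund1}(2), which in any case you would have to verify rather than merely invoke. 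In short: correct skeleton, but the decisive step---why the $2$-primary part of $\pi_m(\mathbb{S}^7)$ lifts to $U(4)$ even when $\pi_{m-1}(U(3);2)\neq 0$---is missing, and the substitute you offer for it is false.
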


Then, as in \cite[Corollary~2.47]{juno-marek}, we deduce:
\begin{corollary}\label{GCP1}
\begin{enumerate}\thmenumhspace
\item 
$G_m(\mathbb{S}^{5}/\mathbb{Z}_l)=\pi_m(\mathbb{S}^{5}/\mathbb{Z}_l)$ for $10\leq m\leq 12$;

\item  $G_m(\mathbb{S}^{5}/\mathbb{Z}_l;p)=\pi_m(\mathbb{S}^{5}/\mathbb{Z}_l;p)$ for an odd prime $p$;

\item $G_m(\mathbb{S}^{4n+3}/\mathbb{Z}_l)\supseteq(\gamma_{4n+3}\eta^m_{4n+3})_*\pi_m^{4n+m+3}$ for $m=1,2$;

\item
\begin{enumerate}[label={\textup{(\roman{*})}}]
\item 
$G_m(\mathbb{S}^{4n+1}/\mathbb{Z}_l)\supseteq 2(12,n)(\gamma_{2n}\nu^+_{4n+1})_*\pi_m(\mathbb{S}^{4n+4})$;

\item $G_m(\mathbb{S}^{4n+3}/\mathbb{Z}_l)\supseteq (12,n+2)(\gamma_{2n+1}\nu^+_{4n+3})_*\pi_m(\mathbb{S}^{4n+6})$;
\end{enumerate}

\item $G_m(\mathbb{S}^{8n+5}/\mathbb{Z}_l)\supseteq(\gamma_{4n+2}\nu^2_{8n+5})_*\pi_m^{8n+11}$;

\item $G_{8n+11}(\mathbb{S}^{8n+3}/\mathbb{Z}_l)=\pi_{8n+11}(\mathbb{S}^{8n+3}/\mathbb{Z}_l)$ for  $n\geq 2$;

\item $G_{8n+m}(\mathbb{S}^{8n+7}/\mathbb{Z}_l)=\pi_{8n+m}(\mathbb{S}^{8n+7}/\mathbb{Z}_l)$ for  $m=28,29$ with $n\geq 2$.
\end{enumerate}
\end{corollary}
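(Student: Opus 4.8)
The plan is to bracket each $G_m(\mathbb{S}^{2n+1}/\mathbb{Z}_l)$ from above and below and then match the two bounds dimension by dimension against the homotopy of spheres and unitary groups, exactly as \cite[Corollary~2.47]{juno-marek} is deduced from \cite[Theorem~2.45]{juno-marek} and \cite[Proposition~2.46]{juno-marek}; in the present set-up the local analogues of those two results are Proposition~\ref{GCP} and the foregoing computation of $G_m(\mathbb{S}^7/\mathbb{Z}_l;2)$, while the general machinery is Proposition~\ref{fund1}, Lemma~\ref{SL} and Proposition~\ref{l1}. The upper bound is uniform: Proposition~\ref{l1} gives $\gamma_{2n+1\ast}^{-1}(G_m(\mathbb{S}^{2n+1}/\mathbb{Z}_l))\subseteq G_m(\mathbb{S}^{2n+1})$, and since $\gamma_{2n+1\ast}$ is an isomorphism for $m>1$ this reads $G_m(\mathbb{S}^{2n+1}/\mathbb{Z}_l)\subseteq \gamma_{2n+1\ast}G_m(\mathbb{S}^{2n+1})$. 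The lower bounds come from Proposition~\ref{fund1} (via $\operatorname{Ker}\Delta_\mathbb{C}$ and the complex $J$-homomorphism $J_\mathbb{C}=J\circ r_\ast$) and from Lemma~\ref{SL} applied to $U(n+1)\to U(n+1)/(U(n)\times\mathbb{Z}_l)=\mathbb{S}^{2n+1}/\mathbb{Z}_l$, which yields $\gamma_{2n+1\ast}\operatorname{Ker}\Delta_\mathbb{C}=p'_\ast\pi_m(U(n+1))\subseteq G_m(\mathbb{S}^{2n+1}/\mathbb{Z}_l)$.

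For the equalities (1), (2), (6) and (7) I would argue case by case. In each listed degree I first ask whether $\pi_{m-1}(U(n);p)=0$ for the primes $p$ occurring in $\pi_m(\mathbb{S}^{2n+1})$; if so, Proposition~\ref{fund1}(1) gives $\gamma_{2n+1\ast}\pi_m(\mathbb{S}^{2n+1};p)\subseteq G_m(\mathbb{S}^{2n+1}/\mathbb{Z}_l)$ and, with the upper bound, the asserted equality. When $\pi_{m-1}(U(n);p)\neq0$ I instead either verify that $\Delta_\mathbb{C}$ vanishes on the relevant summand of $\pi_m(\mathbb{S}^{2n+1})$---again reducing to Proposition~\ref{fund1}(1)---or that $E\circ J_\mathbb{C}$ restricted to $\Delta_\mathbb{C}(\pi_m(\mathbb{S}^{2n+1}))$ is a monomorphism, so that Proposition~\ref{fund1}(2) gives $G_m(\mathbb{S}^{2n+1}/\mathbb{Z}_l)=\gamma_{2n+1\ast}G_m(\mathbb{S}^{2n+1})$; the required values of $G_m(\mathbb{S}^{2n+1})$ (which in the stated ranges are all of $\pi_m(\mathbb{S}^{2n+1})$) are read off from \cite{toda} and \cite{juno-marek}. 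Concretely, the cases $m=10,11$ of (1) are already Proposition~\ref{GCP}(3)--(4), only $m=12$ needing the vanishing of $\Delta_\mathbb{C}\colon\pi_{12}(\mathbb{S}^5)\to\pi_{11}(U(2))$; for the odd-primary statement (2) I localise at $p$ and use $2[\iota_{2n+1},\iota_{2n+1}]=0$, so the Whitehead products in $\pi_\ast(\mathbb{S}^5)$ vanish $p$-locally and $G_m(\mathbb{S}^5;p)=\pi_m(\mathbb{S}^5;p)$ (when moreover $p\nmid l$ this equality already follows from Theorem~\ref{main} together with \cite[Theorem~2.3]{dac-marek}).

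For the lower bounds (3), (4) and (5) I use only Lemma~\ref{SL}: it remains to identify the cyclic groups on their right-hand sides with images in $\pi_m(\mathbb{S}^{2n+1})$ of explicit classes of $\pi_m(U(n+1))$, hence with subgroups of $\operatorname{Ker}\Delta_\mathbb{C}$. The elements $\eta^m_\ast$, $\nu^+_\ast$ and $\nu^2_\ast$ are the ones carried by the unitary groups and tabulated in \cite{juno-marek}, and the integers $(12,n)$ and $(12,n+2)$ are the orders of the pertinent images of $J_\mathbb{C}$ (equivalently, of $[\iota_{2n+1},\iota_{2n+1}]$), so substituting them into $\gamma_{2n+1\ast}\operatorname{Ker}\Delta_\mathbb{C}\subseteq G_m(\mathbb{S}^{2n+1}/\mathbb{Z}_l)$ produces the stated inclusions.

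The hard part is not the homotopy-theoretic mechanism---that is entirely contained in Propositions~\ref{fund1} and~\ref{l1} and Lemma~\ref{SL}---but the bookkeeping it hides: for each dimension occurring in (1)--(7) one must compute $\operatorname{Ker}\bigl(\Delta_\mathbb{C}\colon\pi_m(\mathbb{S}^{2n+1})\to\pi_{m-1}(U(n))\bigr)$, equivalently pin down the unstable complex $J$-homomorphism and the boundary of the bundle $U(n)\to U(n+1)\to\mathbb{S}^{2n+1}$, and read off $G_\ast(\mathbb{S}^{2n+1})$. Once these are extracted from \cite{toda} and \cite{juno-marek} every case closes, which is what ``as in \cite[Corollary~2.47]{juno-marek}'' is meant to convey.
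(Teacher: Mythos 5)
Your strategy is essentially the paper's own: the paper offers no argument beyond ``as in \cite[Corollary~2.47]{juno-marek}'', and what that citation hides is exactly the machine you describe --- the upper bound $G_m(\mathbb{S}^{2n+1}/\mathbb{Z}_l)\subseteq\gamma_{2n+1\ast}G_m(\mathbb{S}^{2n+1})$ from Proposition~\ref{l1}, the lower bounds $\operatorname{Ker}\Delta'_{\mathbb{C}}=p'_\ast\pi_m(U(n+1))\subseteq G_m(\mathbb{S}^{2n+1}/\mathbb{Z}_l)$ from Lemma~\ref{SL} together with Proposition~\ref{fund1}, and Toda-table bookkeeping for $\Delta_{\mathbb{C}}$ in each listed degree.

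One caveat on your treatment of item (2): the observation that $2[\iota_5,\iota_5]=0$ kills odd-primary Whitehead products gives $P_m(\mathbb{S}^5;p)=\pi_m(\mathbb{S}^5;p)$, hence at best a statement about the \emph{sphere}; it does not by itself produce the lower bound for the quotient, because Gottlieb groups only pull back along $\gamma_{5}$ (Proposition~\ref{l1}), they do not push forward. When $p\nmid l$ the corollary to Theorem~\ref{main} via \cite[Theorem~2.3]{dac-marek} closes the case, as you note; but when $p\mid l$ you must return to your general mechanism and check $\Delta_{\mathbb{C}}$ on the $p$-component of $\pi_m(\mathbb{S}^5)$ mapping to $\pi_{m-1}(U(2);p)=\pi_{m-1}(\mathbb{S}^3;p)$, which is not zero in general. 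Since your first paragraph already prescribes exactly this check, the gap is one of exposition rather than of method, but as written the sentence on (2) proves the wrong statement for the primes that actually matter.
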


\section{Gottlieb groups of space forms $\mathbb{S}^{2n+1}/H$}\label{sec.3}

Let $H\subseteq SO(2n + 2)$ be a finite subgroup of the special
orthogonal group $SO(2n+2)$ acting freely on the $(2n+1)$-sphere
$\mathbb{S}^{2n+1}$. We point out that those groups have been fully classified by Wolf in \cite{wolf}.
The orbit spaces $\mathbb{S}^{2n+1}/H$ are called \textit{spherical $(2n+1)$-manifolds} and sometimes also \textit{elliptic 
$(2n+1)$-manifolds} or \textit{Clifford--Klein manifolds}.

Certainly, a spherical $(2n+1)$-manifold $\mathbb{S}^{2n+1}/H$ has a finite fundamental group isomorphic to $H$ itself. The \textit{William Thurston's elliptization conjecture}, proved by Grigori Perelman, states that conversely for $n=1$ all $3$-manifolds with finite fundamental group are spherical manifolds.

Now, write $\gamma_{2n+1} \colon \mathbb{S}^{2n+1}\to \mathbb{S}^{2n+1}/H$ for the quotient map.  Then, in view of Proposition~\ref{l1}, Corollary~\ref{CCC} and Lemma~\ref{SL}, we may state:
\begin{remark}Let $m>1$. Then: 
\begin{enumerate}
\item $G_m^{\gamma_{2n+1}}(\mathbb{S}^{2n+1}/H)=P_m^{\gamma_{2n+1}}(\mathbb{S}^{2n+1}/H)=\gamma_{2n+1*}G_m(\mathbb{S}^{2n+1})$.

\item  $G_m^{\gamma_3}(\mathbb{S}^3/H)=P_m^{\gamma_3}(\mathbb{S}^3/H)=G_m(\mathbb{S}^3/H)=P_m(\mathbb{S}^3/H)=\gamma_{3*}\pi_m(\mathbb{S}^3)$.
\end{enumerate}
\label{lenss}\end{remark}
 
Next, considering the fibration
\[\mathbb{S}^{2n+1}/H \to V_{2n+3,2}/H \to \mathbb{S}^{2n+2}\] for $n\geq 1$,
where $V_{2n+3,2}$ is the Stiefel manifold, it was deduced in \cite{dac-marek} that
\[G_{2n+1}(\mathbb{S}^{2n+1}/H) = \begin{cases}
\mathbb{Z}, & \text{for $n=0,1,3$};\\
2\mathbb{Z}, & \text{for any other $n$.}
\end{cases}
\]

Now, let $Sp(n)$ be the $n$\textsuperscript{th} symplectic group and $H<\mathbb{S}^3=Sp(1)$ be a finite subgroup. Write $i'_n\colon  Sp(n)\times H\hookrightarrow Sp(n)\times Sp(1)
\hookrightarrow Sp(n+1)$ for the canonical inclusion map. Then, we may identify $Sp(n+1)/Sp(n)=\mathbb{S}^{4n+3}$ and $Sp(n+1)/Sp(n)\times H=\mathbb{S}^{4n+3}/H$.
Set $p_n \colon  Sp(n+1)\to \mathbb{S}^{4n+3}$ and $p'_n\colon  Sp(n+1)\to \mathbb{S}^{4n+3}/H$ for the quotient maps.
Now, we consider the map of exact sequences induced by the fibrations $Sp(n+1)\xrightarrow{Sp(n)} \mathbb{S}^{4n+3}$ and
$Sp(n+1)\xrightarrow{Sp(n)\times H} \mathbb{S}^{4n+3}/H$:
\begin{equation*}\label{comg3}
\begin{gathered}
\xymatrix@C=1cm{
\pi_m(Sp(n+1))\ar@{=}[d]\ar[r]^-{p_\ast}&
\pi_m(\mathbb{S}^{4n+3})\ar[d]^{{\gamma_{4n+3}}_\ast}\ar[r]^-{\Delta_{\mathbb{H}}}&\pi_{m-1}(Sp(n))\ar@{^{(}->}[d]\\
\pi_m(Sp(n+1))\ar[r]_-{p'_\ast}&\pi_m(\mathbb{S}^{4n+3}/H)\ar[r]_-{\Delta'_{\mathbb{H}}}&\pi_{m-1}(Sp(n)\times H)\rlap{.}}
\end{gathered}
\end{equation*}
Then, by Lemma~\ref{SL}, we have:
\begin{equation*}\label{sl3}
\operatorname{Ker} \Delta'_\mathbb{H}=p'_*\pi_m(Sp(n+1))\subseteq G_m(\mathbb{S}^{4n+3}/H).
\end{equation*}

Now, let $c\colon Sp(n) \hookrightarrow SU(2n)$ be the canonical inclusion. Taking $J_{\mathbb{H}}=J\circ r_*\circ c_*$, by the result \cite[Lemma~2.33]{juno-marek}, we obtain a key fact determining $G_m(\mathbb{S}^{4n+3}/H)$:
\begin{proposition}\label{fund2}
\begin{enumerate}\thmenumhspace
	\item  $\operatorname{Ker} \bigl(\Delta_\mathbb{H}\colon  \pi_m(\mathbb{S}^{4n+3})\to\pi_{m-1}(Sp(n))\bigr)
\subseteq \gamma_{4n+3\ast}^{-1} G_m(\mathbb{S}^{4n+3}/H)$.
In particular, it holds $\gamma_{4n+3\ast}\pi_m(\mathbb{S}^{4n+3};p)\subseteq G_m(\mathbb{S}^{4n+3}/H)$
provided $\pi_{m-1}(Sp(n);p)=0$ for a prime $p$.

\item  Let $m\ge 5$. If $E^3\circ {J_\mathbb{H}}_{|\Delta_{\mathbb{H}}(\pi_m(\mathbb{S}^{4n+3}))} \colon  \Delta_{\mathbb{H}}(\pi_m(\mathbb{S}^{4n+3}))\to \pi_{m+4n+2}(\mathbb{S}^{4n+3})$
is a monomorphism then $\gamma_{4n+3\ast}G_m(\mathbb{S}^{4n+3})\subseteq G_m(\mathbb{S}^{4n+3}/H)$. In particular, under the assumption before, $G_m(\mathbb{S}^{4n+3}/H)={\gamma_{4n+3}}_\ast G_m(\mathbb{S}^{4n+3})$.
\end{enumerate}
\end{proposition}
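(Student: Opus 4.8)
The plan is to run, in the quaternionic setting, the same argument that yields the complex case (Proposition~\ref{fund1}), replacing the unitary fibration over $\mathbb{S}^{2n+1}$ by the symplectic fibration $Sp(n+1)\to\mathbb{S}^{4n+3}$ with fibre $Sp(n)$ and the data $(J_\mathbb{C},E)$ by $(J_\mathbb{H},E^3)$, and to take \cite[Lemma~2.33]{juno-marek} as the one substantial input.

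For part~(1), I would chase the ladder of fibration exact sequences displayed just above the statement. If $\alpha\in\operatorname{Ker}\bigl(\Delta_\mathbb{H}\colon\pi_m(\mathbb{S}^{4n+3})\to\pi_{m-1}(Sp(n))\bigr)$, then commutativity of the right-hand square forces $\Delta'_\mathbb{H}(\gamma_{4n+3\ast}\alpha)=0$, so $\gamma_{4n+3\ast}\alpha\in\operatorname{Ker}\Delta'_\mathbb{H}=p'_\ast\pi_m(Sp(n+1))$, which sits inside $G_m(\mathbb{S}^{4n+3}/H)$ by Lemma~\ref{SL}; equivalently $\alpha\in\gamma_{4n+3\ast}^{-1}G_m(\mathbb{S}^{4n+3}/H)$, and applying $\gamma_{4n+3\ast}$ gives $\gamma_{4n+3\ast}\operatorname{Ker}\Delta_\mathbb{H}\subseteq G_m(\mathbb{S}^{4n+3}/H)$. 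For the stated consequence, if $\pi_{m-1}(Sp(n);p)=0$ for a prime $p$, then $\Delta_\mathbb{H}$ carries the $p$-primary subgroup $\pi_m(\mathbb{S}^{4n+3};p)$ into $\pi_{m-1}(Sp(n);p)=0$, so $\pi_m(\mathbb{S}^{4n+3};p)\subseteq\operatorname{Ker}\Delta_\mathbb{H}$ and the first part applies.

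For part~(2), the key input is \cite[Lemma~2.33]{juno-marek}: after the substitutions above it identifies the Whitehead square with the $J$-image of the boundary, that is, $[\iota_{4n+3},\alpha]=\pm\bigl(E^3\circ J_\mathbb{H}\circ\Delta_\mathbb{H}\bigr)(\alpha)$ for $\alpha\in\pi_m(\mathbb{S}^{4n+3})$ and $m\ge 5$. Since $G_m(\mathbb{S}^{4n+3})\subseteq P_m(\mathbb{S}^{4n+3})\subseteq\operatorname{Ker}[\iota_{4n+3},-]$, the hypothesis that $E^3\circ J_\mathbb{H}$ is a monomorphism on $\Delta_\mathbb{H}(\pi_m(\mathbb{S}^{4n+3}))$ upgrades the relation $[\iota_{4n+3},\alpha]=0$ to $\Delta_\mathbb{H}\alpha=0$; hence $G_m(\mathbb{S}^{4n+3})\subseteq\operatorname{Ker}\Delta_\mathbb{H}$, and feeding this into part~(1) gives $\gamma_{4n+3\ast}G_m(\mathbb{S}^{4n+3})\subseteq G_m(\mathbb{S}^{4n+3}/H)$. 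For the final equality, Proposition~\ref{l1} gives $\gamma_{4n+3\ast}^{-1}G_m(\mathbb{S}^{4n+3}/H)\subseteq G_m(\mathbb{S}^{4n+3})$, and since $m\ge 5>1$ the covering map induces an isomorphism $\gamma_{4n+3\ast}$ on $\pi_m$, so applying it yields the reverse inclusion $G_m(\mathbb{S}^{4n+3}/H)\subseteq\gamma_{4n+3\ast}G_m(\mathbb{S}^{4n+3})$.

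The hard part will be the identity $[\iota_{4n+3},\alpha]=\pm(E^3\circ J_\mathbb{H}\circ\Delta_\mathbb{H})(\alpha)$ in the claimed range, which one borrows wholesale from \cite[Lemma~2.33]{juno-marek}: it rests on the classical comparison of a Whitehead square with the $J$-homomorphism applied to the boundary of the frame fibration, traced through $Sp(n)\hookrightarrow SU(2n)\hookrightarrow SO(4n)\hookrightarrow SO(4n+3)$ and the fact that $J$ commutes with stabilization up to one suspension per step, which accounts for the $E^3$. Everything else is the diagram chase above together with Lemma~\ref{SL} and Proposition~\ref{l1}.
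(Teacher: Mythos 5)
Your proposal is correct and follows essentially the same route as the paper: the ladder of exact sequences for the fibrations $Sp(n+1)\xrightarrow{Sp(n)}\mathbb{S}^{4n+3}$ and $Sp(n+1)\xrightarrow{Sp(n)\times H}\mathbb{S}^{4n+3}/H$ together with Lemma~\ref{SL} gives $\gamma_{4n+3\ast}\operatorname{Ker}\Delta_{\mathbb{H}}\subseteq\operatorname{Ker}\Delta'_{\mathbb{H}}=p'_\ast\pi_m(Sp(n+1))\subseteq G_m(\mathbb{S}^{4n+3}/H)$, and part~(2) rests on the identity $[\iota_{4n+3},\alpha]=\pm(E^3\circ J_{\mathbb{H}}\circ\Delta_{\mathbb{H}})(\alpha)$ from \cite[Lemma~2.33]{juno-marek} plus Proposition~\ref{l1} for the reverse inclusion, exactly as in the complex case of Proposition~\ref{fund1}.
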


By the result \cite[Theorem~2.49]{juno-marek}, we obtain:
\begin{theorem}\label{hthm}
$G_m(\mathbb{S}^{4n+3}/H)\supseteq(24,n+2)(\gamma_{4n+3} \nu^+_{4n+3})_*
\pi_m(\mathbb{S}^{4n+6})$. In particular, we derive $G_{4n+6}(\mathbb{S}^{4n+3}/H)\supseteq(24,n+2) \gamma_{4n+3\ast}\pi_{4n+6}(\mathbb{S}^{4n+3})\approx\mathbb{Z}_{\frac{24}{(24,n+2)}}$ for $n\geq 2$.
\end{theorem}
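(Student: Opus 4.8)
The plan is to transport the sphere-level computation \cite[Theorem~2.49]{juno-marek} across the covering $\gamma_{4n+3}$ by means of Proposition~\ref{fund2}(1), and then to read off the displayed consequence by setting $m=4n+6$.

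First I would record what \cite[Theorem~2.49]{juno-marek} provides here: a lift of the element $(24,n+2)\,\nu^+_{4n+3}\in\pi_{4n+6}(\mathbb{S}^{4n+3})$ along the bundle projection $p_n\colon Sp(n+1)\to\mathbb{S}^{4n+3}$, say $(24,n+2)\,\nu^+_{4n+3}=p_{n\ast}\xi$ with $\xi\in\pi_{4n+6}(Sp(n+1))$; equivalently, by exactness of the homotopy sequence of $Sp(n)\to Sp(n+1)\xrightarrow{p_n}\mathbb{S}^{4n+3}$, \[(24,n+2)\,\nu^+_{4n+3}\in\operatorname{Ker}\bigl(\Delta_\mathbb{H}\colon\pi_{4n+6}(\mathbb{S}^{4n+3})\to\pi_{4n+5}(Sp(n))\bigr).\] Then, for any $\beta\in\pi_m(\mathbb{S}^{4n+6})$, functoriality of composition gives $\bigl((24,n+2)\nu^+_{4n+3}\bigr)\circ\beta=p_n\circ\xi\circ\beta=p_{n\ast}(\xi\circ\beta)$, so the subgroup $(24,n+2)(\nu^+_{4n+3})_\ast\pi_m(\mathbb{S}^{4n+6})$ lies in $\operatorname{Im}\bigl(p_{n\ast}\colon\pi_m(Sp(n+1))\to\pi_m(\mathbb{S}^{4n+3})\bigr)=\operatorname{Ker}\bigl(\Delta_\mathbb{H}\colon\pi_m(\mathbb{S}^{4n+3})\to\pi_{m-1}(Sp(n))\bigr)$.

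Next I would apply Proposition~\ref{fund2}(1), which states exactly $\operatorname{Ker}\Delta_\mathbb{H}\subseteq\gamma_{4n+3\ast}^{-1}G_m(\mathbb{S}^{4n+3}/H)$; pushing the inclusion just obtained forward by $\gamma_{4n+3\ast}$ yields the first assertion, \[(24,n+2)(\gamma_{4n+3}\nu^+_{4n+3})_\ast\pi_m(\mathbb{S}^{4n+6})\subseteq G_m(\mathbb{S}^{4n+3}/H).\] (Spelled out: in the comparison ladder displayed just before Proposition~\ref{fund2}, $\Delta'_\mathbb{H}\circ\gamma_{4n+3\ast}$ equals $\Delta_\mathbb{H}$ followed by the monomorphism $\pi_{m-1}(Sp(n))\hookrightarrow\pi_{m-1}(Sp(n)\times H)$, so $\gamma_{4n+3\ast}$ sends $\operatorname{Ker}\Delta_\mathbb{H}$ into $\operatorname{Ker}\Delta'_\mathbb{H}=p'_{n\ast}\pi_m(Sp(n+1))$, and Lemma~\ref{SL} puts that subgroup inside $G_m(\mathbb{S}^{4n+3}/H)$.) To get the displayed special case I would then take $m=4n+6$: for $n\ge 2$ the group $\pi_{4n+6}(\mathbb{S}^{4n+3})$ is cyclic of order $24$ with generator $\nu^+_{4n+3}$, hence $(\nu^+_{4n+3})_\ast\pi_{4n+6}(\mathbb{S}^{4n+6})=\langle\nu^+_{4n+3}\rangle=\pi_{4n+6}(\mathbb{S}^{4n+3})$; and since $\gamma_{4n+3}$ is a covering map, $\gamma_{4n+3\ast}$ is an isomorphism on $\pi_{4n+6}$, so the first assertion becomes $G_{4n+6}(\mathbb{S}^{4n+3}/H)\supseteq(24,n+2)\,\gamma_{4n+3\ast}\pi_{4n+6}(\mathbb{S}^{4n+3})\cong(24,n+2)\,\mathbb{Z}_{24}\cong\mathbb{Z}_{24/(24,n+2)}$.

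Everything past the first paragraph is formal bookkeeping with the fibration ladder and the covering; the one substantive ingredient is the lifting $(24,n+2)\,\nu^+_{4n+3}\in\operatorname{Ker}\Delta_\mathbb{H}$, which I would simply quote from \cite[Theorem~2.49]{juno-marek}. If one wished to prove that from scratch, I expect the main obstacle to be the determination of $\Delta_\mathbb{H}(\nu^+_{4n+3})\in\pi_{4n+5}(Sp(n))$ — equivalently, pinning down the order of the relevant image of the quaternionic $J$-homomorphism $J_\mathbb{H}=J\circ r_\ast\circ c_\ast$ — which is precisely where the arithmetic factor $(24,n+2)$ enters.
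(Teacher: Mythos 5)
Your proposal is correct and follows essentially the same route as the paper, which gives no argument beyond citing \cite[Theorem~2.49]{juno-marek} and implicitly relying on the $Sp(n+1)$-fibration ladder, Lemma~\ref{SL} and Proposition~\ref{fund2}(1) set up just before the theorem. Your write-up simply makes explicit the intended bookkeeping (the lift $(24,n+2)\nu^+_{4n+3}\in\operatorname{Ker}\Delta_{\mathbb{H}}=\operatorname{Im}p_{n\ast}$, closure of this kernel under right composition, and push-forward by $\gamma_{4n+3\ast}$ into $\operatorname{Ker}\Delta'_{\mathbb{H}}\subseteq G_m(\mathbb{S}^{4n+3}/H)$), together with the standard identification $\pi_{4n+6}(\mathbb{S}^{4n+3})\cong\mathbb{Z}_{24}$ for the special case.
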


Observe that Theorem~\ref{hthm} yields:
\begin{corollary}\label{GHP1}
\begin{enumerate}\thmenumhspace
\item $G_{8n+9}(\mathbb{S}^{8n+3}/H)=0$ and $G_{8n+10}(\mathbb{S}^{84+7}/H)=\gamma_{8n+7\ast}\pi_{8n+10}(\mathbb{S}^{8n+7})$ for $n\not\equiv 0 \pmod 3$.

\item $G_{4n+14}(\mathbb{S}^{4n+3}/H)= \gamma_{4n+3\ast}\pi_{4n+14}(\mathbb{S}^{4n+3})$ for $n\equiv 5,9 \pmod{12}$ with  $n\ge 5$ and
$n\equiv 15,23 \pmod{24}$.
\end{enumerate}
\end{corollary}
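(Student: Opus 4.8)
The plan is to trap $G_m(\mathbb{S}^{4n+3}/H)$ between a lower and an upper estimate and then check that, for the listed pairs $(m,n)$, the two estimates coincide. The lower estimate is exactly Theorem~\ref{hthm}:
\[
(24,n+2)\,(\gamma_{4n+3}\nu^+_{4n+3})_\ast\pi_m(\mathbb{S}^{4n+6})\ \subseteq\ G_m(\mathbb{S}^{4n+3}/H).
\]
For the upper estimate, $\gamma_{4n+3}$ is a covering map, so $\gamma_{4n+3\ast}\colon\pi_m(\mathbb{S}^{4n+3})\to\pi_m(\mathbb{S}^{4n+3}/H)$ is an isomorphism for $m>1$; Proposition~\ref{l1} then yields $G_m(\mathbb{S}^{4n+3}/H)\subseteq\gamma_{4n+3\ast}G_m(\mathbb{S}^{4n+3})\subseteq\gamma_{4n+3\ast}\pi_m(\mathbb{S}^{4n+3})=\pi_m(\mathbb{S}^{4n+3}/H)$, and Proposition~\ref{fund2}(2) upgrades the first of these inclusions to the equality $G_m(\mathbb{S}^{4n+3}/H)=\gamma_{4n+3\ast}G_m(\mathbb{S}^{4n+3})$ whenever $E^3\circ J_\mathbb{H}$ is injective on $\Delta_\mathbb{H}(\pi_m(\mathbb{S}^{4n+3}))$.

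For part (1) I reindex Theorem~\ref{hthm}: the first assertion is the case $4n'+3=8n+3$ (so $n'=2n$) and the second the case $4n'+3=8n+7$ (so $n'=2n+1$); part (2) uses the theorem unchanged, with $4n+14=(4n+3)+11$ and $4n+14=(4n+6)+8$. The bookkeeping is done by the coefficient $(24,n'+2)$. For the second assertion of (1) we have $n'+2=2n+3$, which is odd, so $(24,2n+3)\in\{1,3\}$ and equals $1$ precisely when $3\nmid 2n+3$, i.e.\ when $n\not\equiv 0\pmod 3$ — exactly the hypothesis; since $\nu^+_{8n+7}$ generates the $3$-stem $\pi_{8n+10}(\mathbb{S}^{8n+7})$ and $\pi_{8n+10}(\mathbb{S}^{8n+10})=\mathbb{Z}$, the lower estimate then becomes all of $\gamma_{8n+7\ast}\pi_{8n+10}(\mathbb{S}^{8n+7})=\pi_{8n+10}(\mathbb{S}^{8n+7}/H)$, and the trivial inclusion $G_{8n+10}(\mathbb{S}^{8n+7}/H)\subseteq\pi_{8n+10}(\mathbb{S}^{8n+7}/H)$ forces equality. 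For part (2) one checks that in each class $n\equiv 5,9\pmod{12}$ and $n\equiv 15,23\pmod{24}$ the number $n+2$ is odd and $\not\equiv 0\pmod 3$, so $(24,n+2)=1$ and the lower estimate is $(\gamma_{4n+3}\nu^+_{4n+3})_\ast\pi_{4n+14}(\mathbb{S}^{4n+6})$; this alone reaches only (part of) the $2$-primary component of the $11$-stem, so one additionally invokes Proposition~\ref{fund2} (equivalently, the validity of its monomorphism hypothesis for these $n$) together with the equality $G_{4n+14}(\mathbb{S}^{4n+3})=\pi_{4n+14}(\mathbb{S}^{4n+3})$ to supply the odd-primary parts and conclude. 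Finally, for the first assertion of (1) the lower estimate of Theorem~\ref{hthm} is vacuous — the coefficient $(24,2n+2)$ is even and annihilates the $2$-torsion image — so here one uses only the upper estimate $G_{8n+9}(\mathbb{S}^{8n+3}/H)\subseteq\gamma_{8n+3\ast}G_{8n+9}(\mathbb{S}^{8n+3})$ and the vanishing $G_{8n+9}(\mathbb{S}^{8n+3})=0$.

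The real content, and hence the main obstacle, is the sphere- and symplectic-group-level homotopy input underlying these last reductions, which I would import from \cite{juno-marek} (itself built on Toda's composition methods and the $EHP$-sequence): namely $G_{8n+9}(\mathbb{S}^{8n+3})=0$, $G_{8n+10}(\mathbb{S}^{8n+7})=\pi_{8n+10}(\mathbb{S}^{8n+7})$ for $n\not\equiv 0\pmod 3$, $G_{4n+14}(\mathbb{S}^{4n+3})=\pi_{4n+14}(\mathbb{S}^{4n+3})$ for the relevant $n$, and the injectivity of $E^3\circ J_\mathbb{H}$ on $\Delta_\mathbb{H}(\pi_{4n+14}(\mathbb{S}^{4n+3}))$ (or, alternatively, the vanishing of the appropriate $p$-primary parts of $\pi_{4n+13}(Sp(n))$). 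It is exactly in pinning down for which residue classes the relevant Whitehead products and $J$-images vanish that the mod-$12$ versus mod-$24$ dichotomy, and the restriction $n\ge 5$, emerge.
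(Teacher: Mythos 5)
Your proposal follows essentially the same route the paper intends: sandwich $G_m(\mathbb{S}^{4n+3}/H)$ between the lower bound of Theorem~\ref{hthm} (imported from \cite[Theorem~2.49]{juno-marek}) and the upper bound $G_m(\mathbb{S}^{4n+3}/H)\subseteq\gamma_{4n+3\ast}G_m(\mathbb{S}^{4n+3})$ coming from Proposition~\ref{l1}, with the sphere-level facts supplied by \cite{juno-marek}. Your reindexing and the arithmetic of $(24,n'+2)$ for $n'=2n$ and $n'=2n+1$ are correct, and your argument for the second assertion of (1) is complete: $(24,2n+3)=1$ exactly when $n\not\equiv 0\pmod 3$, so the lower bound already fills $\pi_{8n+10}(\mathbb{S}^{8n+7}/H)$. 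You are also right — and more candid than the paper's one-line ``Observe that Theorem~\ref{hthm} yields'' — that the theorem alone cannot give the first assertion of (1) (the coefficient $(24,2n+2)$ is even and annihilates the order-two image of $\nu^+_{8n+3\ast}\pi_{8n+9}(\mathbb{S}^{8n+6})$) nor part (2) (post-composition with $\nu^+_{4n+3}$ sends the $2$-torsion $8$-stem into the elements of order at most $2$ of $\pi_{4n+14}(\mathbb{S}^{4n+3})\cong\mathbb{Z}_{504}$); these need, respectively, $G_{8n+9}(\mathbb{S}^{8n+3})=0$ and the monomorphism hypothesis of Proposition~\ref{fund2}(2) (equivalently $G_{4n+14}(\mathbb{S}^{4n+3})=\pi_{4n+14}(\mathbb{S}^{4n+3})$ for the listed residue classes), both of which you, like the paper, take from \cite{juno-marek} rather than prove. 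The only slack in your write-up is in part (2), where once Proposition~\ref{fund2}(2) applies together with $G_{4n+14}(\mathbb{S}^{4n+3})=\pi_{4n+14}(\mathbb{S}^{4n+3})$ the conclusion follows outright and the Theorem~\ref{hthm} lower bound is redundant; this is a matter of economy, not correctness.
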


Applying \cite[Corollary~2.53]{juno-marek}, we derive:
\begin{corollary} \label{HGP2}
$G_{16n+m}(\mathbb{S}^{16n-1}/H)= \gamma_{16n-1\ast}\pi_{16n+m}(\mathbb{S}^{16n-1})$ for $m=20$, $n\ge 1$ and $m=21$, $ n\ge 2$.
\end{corollary}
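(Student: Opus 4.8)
The plan is to deduce Corollary~\ref{HGP2} directly from Proposition~\ref{fund2} together with the stable-range computation of $\pi_{m-1}(Sp(n))$, exactly in the spirit of the preceding corollaries \ref{GHP1} and \ref{HGP2}'s siblings. First I would set $4n+3=16n'-1$, so that the relevant sphere is $\mathbb{S}^{16n'-1}$ and the symplectic bundle involved is $Sp(4n')\hookrightarrow Sp(4n'+1)\to\mathbb{S}^{16n'+3}$; the connecting map is $\Delta_\mathbb{H}\colon\pi_{m}(\mathbb{S}^{16n'-1})\to\pi_{m-1}(Sp(4n'-1))$. The target degree is $m=16n'+20$ or $16n'+21$, i.e.\ $m-(16n'-1)=21$ or $22$, which lies well inside the stable range for $Sp(4n'-1)$ (stable range roughly $\dim\le 4(4n'-1)+1$, satisfied once $n'\ge 1$ or $n'\ge 2$ respectively). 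So the first step is to quote \cite[Corollary~2.53]{juno-marek} — or re-derive from Bott/Harris stable homotopy of $Sp$ and the EHP/James sequences as in \cite{juno-marek} — that in these two stems the homomorphism $E^3\circ{J_\mathbb{H}}_{|\Delta_\mathbb{H}(\pi_m(\mathbb{S}^{16n'-1}))}$ is a monomorphism, equivalently that $\Delta_\mathbb{H}$ is trivial on the relevant subgroup or its image injects under the (stable) $J$-homomorphism.

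Second, with that monomorphism in hand, Proposition~\ref{fund2}(2) immediately gives $\gamma_{16n'-1\ast}G_m(\mathbb{S}^{16n'-1})\subseteq G_m(\mathbb{S}^{16n'-1}/H)$, and since the reverse inclusion $G_m(\mathbb{S}^{16n'-1}/H)\subseteq\gamma_{16n'-1\ast}G_m(\mathbb{S}^{16n'-1})$ holds unconditionally by Remark~\ref{lenss}(1) (itself a consequence of Proposition~\ref{l1} and Corollary~\ref{CCC}), we get $G_m(\mathbb{S}^{16n'-1}/H)=\gamma_{16n'-1\ast}G_m(\mathbb{S}^{16n'-1})$ for these $m$. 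The final step is to identify $\gamma_{16n'-1\ast}G_m(\mathbb{S}^{16n'-1})$ with $\gamma_{16n'-1\ast}\pi_m(\mathbb{S}^{16n'-1})$: here one uses that in stems $20$ and $21$ over a sphere of that dimension the Gottlieb group is the whole homotopy group. Since $16n'-1$ is odd and $2[\iota,\iota]=0$, $G_m(\mathbb{S}^{16n'-1})$ equals $\pi_m(\mathbb{S}^{16n'-1})$ precisely when the relevant Whitehead product vanishes; in the two stems in question this vanishing is exactly what \cite[Corollary~2.53]{juno-marek} records (this is why the degree restrictions $n\ge1$ for $m=20$ and $n\ge2$ for $m=21$ appear — they are the ranges where the computation of the relevant $\pi$ of $Sp$ forces both the $\Delta_\mathbb{H}$-triviality and the full-group conclusion).

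I expect the main obstacle to be purely bookkeeping rather than conceptual: one must match the indexing of \cite[Corollary~2.53]{juno-marek} — which is presumably phrased for $G_m(\mathbb{S}^{4n+3})$ or $G_m(\mathbb{S}^{4n+3}/H)$ in a particular stem — against the reindexed form $\mathbb{S}^{16n-1}$, and verify that the exceptional cases and the congruence/size constraints $(n\ge1$, resp.\ $n\ge2)$ translate correctly. A secondary technical point is to be sure that the hypothesis ``$m\ge5$'' in Proposition~\ref{fund2}(2) is met (it trivially is, since $m=16n+20\ge36$), and that one is in the metastable/stable range where $\pi_{m-1}(Sp(4n-1))$ is known — this is where the bound $n\ge 2$ for the $21$-stem genuinely bites. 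Thus the write-up is short: cite Remark~\ref{lenss}(1) for the easy inclusion, Proposition~\ref{fund2}(2) plus \cite[Corollary~2.53]{juno-marek} for the hard inclusion and the identification $G_m(\mathbb{S}^{16n-1})=\pi_m(\mathbb{S}^{16n-1})$ in these stems, and conclude.
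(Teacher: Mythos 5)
Your proposal follows essentially the same route as the paper, which obtains this corollary by invoking \cite[Corollary~2.53]{juno-marek} through exactly the machinery you describe: Proposition~\ref{fund2}(2) for the inclusion $\gamma_{16n-1\ast}G_m(\mathbb{S}^{16n-1})\subseteq G_m(\mathbb{S}^{16n-1}/H)$, the covering-space upper bound from Proposition~\ref{l1}, and the identification $G_m(\mathbb{S}^{16n-1})=\pi_m(\mathbb{S}^{16n-1})$ in these stems. The only blemishes are bookkeeping: the fibration should read $Sp(4n'-1)\hookrightarrow Sp(4n')\to\mathbb{S}^{16n'-1}$ rather than $Sp(4n')\hookrightarrow Sp(4n'+1)\to\mathbb{S}^{16n'+3}$, and the stems involved are the $21$\textsuperscript{st} and $22$\textsuperscript{nd} (not $20$ and $21$), but neither slip affects the argument.
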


Finally, in view of \cite[Proposition~2.54]{juno-marek}, we state:
\begin{proposition}\label{CHP2ex2}
\begin{enumerate}\thmenumhspace
\item  $G_{21}(\mathbb{S}^{11}/H)\supseteq 2\gamma_{11\ast}\pi_{21}(\mathbb{S}^{11})$;

\item  $G_{22}(\mathbb{S}^{11}/H)\supseteq 8\gamma_{11\ast}\pi_{22}(\mathbb{S}^{11})\approx\mathbb{Z}_{63}$;

\item  $G_{22}(\mathbb{S}^{15}/H)\supseteq 4\gamma_{15\ast}\pi_{22}(\mathbb{S}^{15})\approx\mathbb{Z}_{60}$.
\end{enumerate}
\end{proposition}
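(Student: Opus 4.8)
The plan is to obtain all three inclusions from Proposition~\ref{fund2}(1) applied to the symplectic realizations of $\mathbb{S}^{11}/H$ and $\mathbb{S}^{15}/H$, thereby reducing each assertion to a lifting statement on a sphere that is supplied by \cite[Proposition~2.54]{juno-marek}, and then to read off the two cyclic groups from Toda's tables.

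First I would fix the book-keeping. Since here $H<\mathbb{S}^3=Sp(1)$ is finite, the constructions preceding Proposition~\ref{fund2} identify $\mathbb{S}^{11}/H=Sp(3)/(Sp(2)\times H)$ (the case $n=2$, $4n+3=11$) and $\mathbb{S}^{15}/H=Sp(4)/(Sp(3)\times H)$ (the case $n=3$, $4n+3=15$), with quotient maps $p'_n\colon Sp(n+1)\to\mathbb{S}^{4n+3}/H$ satisfying $p'_n=\gamma_{4n+3}\circ p_n$. From the exact sequence of the fibration $Sp(n)\to Sp(n+1)\xrightarrow{p_n}\mathbb{S}^{4n+3}$ one has $\operatorname{Ker}\bigl(\Delta_\mathbb{H}\colon\pi_m(\mathbb{S}^{4n+3})\to\pi_{m-1}(Sp(n))\bigr)=p_{n\ast}\pi_m(Sp(n+1))$, and hence Lemma~\ref{SL} (equivalently Proposition~\ref{fund2}(1)) gives $\gamma_{4n+3\ast}\bigl(\operatorname{Ker}\Delta_\mathbb{H}\bigr)\subseteq G_m(\mathbb{S}^{4n+3}/H)$. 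So it suffices to establish the three sphere-level inclusions $2\pi_{21}(\mathbb{S}^{11})\subseteq\operatorname{Ker}\Delta_\mathbb{H}$, $8\pi_{22}(\mathbb{S}^{11})\subseteq\operatorname{Ker}\Delta_\mathbb{H}$ and $4\pi_{22}(\mathbb{S}^{15})\subseteq\operatorname{Ker}\Delta_\mathbb{H}$, where the kernels are those of $\Delta_\mathbb{H}\colon\pi_{21}(\mathbb{S}^{11})\to\pi_{20}(Sp(2))$, $\Delta_\mathbb{H}\colon\pi_{22}(\mathbb{S}^{11})\to\pi_{21}(Sp(2))$ and $\Delta_\mathbb{H}\colon\pi_{22}(\mathbb{S}^{15})\to\pi_{21}(Sp(3))$ respectively.

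These three facts are exactly what \cite[Proposition~2.54]{juno-marek} provides: there the subgroups $p_{n\ast}\pi_m(Sp(n+1))$ are bounded below by chasing $\Delta_\mathbb{H}$ through the chain of fibrations $Sp(1)\to Sp(2)\to\mathbb{S}^7$, $Sp(2)\to Sp(3)\to\mathbb{S}^{11}$ and $Sp(3)\to Sp(4)\to\mathbb{S}^{15}$ together with the relevant $J_\mathbb{H}$-images, and this yields $2\pi_{21}(\mathbb{S}^{11})\subseteq p_{2\ast}\pi_{21}(Sp(3))$, $8\pi_{22}(\mathbb{S}^{11})\subseteq p_{2\ast}\pi_{22}(Sp(3))$ and $4\pi_{22}(\mathbb{S}^{15})\subseteq p_{3\ast}\pi_{22}(Sp(4))$. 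Inserting these into the inclusion of the previous paragraph gives the three containments into $G_{21}(\mathbb{S}^{11}/H)$, $G_{22}(\mathbb{S}^{11}/H)$ and $G_{22}(\mathbb{S}^{15}/H)$. Finally, since $\gamma_{4n+3}$ is a covering map, $\gamma_{4n+3\ast}\colon\pi_m(\mathbb{S}^{4n+3})\to\pi_m(\mathbb{S}^{4n+3}/H)$ is an isomorphism for $m>1$, so $c\,\gamma_{4n+3\ast}\pi_m(\mathbb{S}^{4n+3})\approx c\,\pi_m(\mathbb{S}^{4n+3})$ for every integer $c$; Toda's tables then give $8\pi_{22}(\mathbb{S}^{11})\approx\mathbb{Z}_{63}$ and $\pi_{22}(\mathbb{S}^{15})\approx\mathbb{Z}_{240}$ (the stable value, generated by $\sigma_{15}$), whence $4\pi_{22}(\mathbb{S}^{15})\approx\mathbb{Z}_{60}$, while for $\pi_{21}(\mathbb{S}^{11})$ no finite identification is recorded since $11$ is odd and $[\iota_{11},\iota_{11}]$ contributes a free summand. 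The genuine obstacle is the middle step, i.e.\ the unstable determination of $\Delta_\mathbb{H}$ into $\pi_{20}(Sp(2))$, $\pi_{21}(Sp(2))$ and $\pi_{21}(Sp(3))$, equivalently deciding which multiples of $\pi_m(\mathbb{S}^{4n+3})$ lift through $p_n$ to $Sp(n+1)$; this rests on explicit knowledge of the homotopy of the low symplectic groups in these degrees and is quoted from \cite[Proposition~2.54]{juno-marek} rather than reconstructed here.
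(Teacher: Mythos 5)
Your argument follows the paper's route exactly: the paper's entire proof of this proposition is the citation to \cite[Proposition~2.54]{juno-marek}, fed through Lemma~\ref{SL} and Proposition~\ref{fund2}(1) via the identifications $\mathbb{S}^{11}/H=Sp(3)/(Sp(2)\times H)$ and $\mathbb{S}^{15}/H=Sp(4)/(Sp(3)\times H)$, and your numerology ($8\cdot\mathbb{Z}_{504}\cong\mathbb{Z}_{63}$, $4\cdot\mathbb{Z}_{240}\cong\mathbb{Z}_{60}$) is correct. One peripheral remark is wrong and should be dropped: $[\iota_{11},\iota_{11}]$ does not contribute a free summand to $\pi_{21}(\mathbb{S}^{11})$ --- that group is finite because $11$ is odd (Serre), and the Whitehead square of $\iota_{11}$ has order at most $2$; free summands from Whitehead squares occur only in $\pi_{2n-1}(\mathbb{S}^n)$ for $n$ even, so the absence of a recorded isomorphism type in item (1) is not for the reason you give, though this does not affect the claimed inclusions.
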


\section{Miscellanea on lens spaces $L^{2n+1}(l; \mathbf{q})$}\label{sec.4}

Consider the $(2n+1)$-dimensional unit sphere  $\mathbb{S}^{2n+1}$ in the Euclidean $(2n+2)$-space in terms of $(n+1)$-complex coordinates $z = (z_0, z_1,\dots, z_n)$ satisfying $z_0\bar{z}_0 + \dots + z_n\bar{z}_n = 1$.

Let  $l\ge 2$ be a fixed integer, and $q_1,\dots,q_n$ be integers relatively prime to $l$. For
the cyclic group $\mathbb{Z}_l=\big<g\big>$, we define an action  $\mathbb{Z}_l\times \mathbb{S}^{2n+1}\to \mathbb{S}^{2n+1}$ by $(g, (z_0, z_1, \dots, z_n)) \mapsto (e^{2\pi i/l}z_0, e^{2\pi iq_1/l}z_1, \dots, e^{2\pi iq_n/l}z_n)$.
This generates a fixed point free rotation of $\mathbb{S}^{2n+1}$ of order $l$. The orbit space $\mathbb{S}^{2n+1}/\mathbb{Z}_l = L^{2n+1}(l; q_1, \dots, q_n)$ is an orientable $(2n + 1)$-dimensional manifold called a \textit{lens space}.
Thus, a lens space is a special case of a spherical orbit space.

The topological classification of lens spaces is given as follows: spaces
$L^{2n+1}(l; q_1,\dots, q_n)$ and $L^{2n+1}(l; q'_1,\dots, q'_n)$ are homeomorphic if and only
if there is a number $b$ and there are numbers $\varepsilon_j\in\{-1,1\}$ for $j=1,\dots,n$ such that 
$(q_1,\dots,q_n)$ is a permutation of $(\varepsilon_1bq'_1,\dots, \varepsilon_nbq'_n) \pmod l$.

The homotopy classification of lens spaces is given as follows: spaces $L^{2n+1}(l; q_1,\dots,q_n)$ and
$L^{2n+1}(l; q'_1,\dots,q'_n)$ have the same homotopy type if and only if $q_1q_2\cdots q_n \equiv \pm k^nq'_1q'_2\cdots
q'_n \pmod l$ for some integer $0<k<l$. Thus, $L^3(5; 1)$ is not homotopic to $L^3(5; 2)$ while they have the
same homotopy groups. For more information on lens spaces readers are referred to \cite{milnor1} and \cite{olum}.

Write $L^{2n+1}(l; q_1,\dots, q_n) = L^{2n+1}(l; \mathbf{q})$, where $\mathbf{q} = (q_1, \dots, q_n)$.
If $\gamma_{2n+1} \colon  \mathbb{S}^{2n+1}\to  L^{2n+1}(l; \mathbf{q})$ is the quotient map and $g^t\in \mathbb{Z}_l$ then $\gamma_{2n+1} (g^t(z)) =\gamma_{2n+1}(z)$ for $z\in \mathbb{S}^{2n+1}$ and $t = 0, \dots, l-1$. Thus, $\mathbb{Z}_l$ is a group of deck
transformations since $\gamma_{2n+1}$ is a covering map, and we have $\pi_1(L^{2n+1}(l; \mathbf{q}))=
\mathbb{Z}_l$ and $\pi_m(L^{2n+1}(l; \mathbf{q})) = \pi_m(\mathbb{S}^{2n+1})$ for $m>1$.

By Proposition~\ref{l1}, we get $G_{m}(L^{2n+1}(l;\mathbf{q}))\subseteq \gamma_{2n+1*}G_m(\mathbb{S}^{2n+1})$ for $m>1$. By means of Remark~\ref{mukai}, it seems that the equality $G_{m}(L^{2n+1}(l;\mathbf{q}))= \gamma_{2n+1*}G_m(\mathbb{S}^{2n+1})$ for $m>1$  does not hold in general. 

In view of \cite[\textsc{Theorem~A}]{oprea}, it holds $G_{1}(L^{2n+1}(l;\mathbf{q}))= \mathbb{Z}_l$. Because  $G_1(L^{2n+1}(l;\mathbf{q}))\subseteq G_1^f(L^{2n+1}(l;\mathbf{q}))\subseteq \pi_1(L^{2n+1}(l;\mathbf{q}))=\mathbb{Z}_l$, we deduce that  $G_1(L^{2n+1}(l;\mathbf{q}))= G_1^f(L^{2n+1}(l;\mathbf{q}))$  for any map $f\colon A\to L^{2n+1}(l;\mathbf{q})$ satisfying the
conditions as stated in Theorem~\ref{main}.

Further, Pak and Woo have shown in \cite{pak} that
\[G_{2n+1}(L^{2n+1}(l;\mathbf{q})) =\begin{cases}
\mathbb{Z}, & \text{for $n=0,1,3$};\\
2\mathbb{Z}, & \text{for any other $n$.}
\end{cases}\]
Corollary~\ref{CCC} and Remark~\ref{lenss} lead to:
\begin{proposition}\label{lens}
\begin{enumerate}\thmenumhspace
\item $G_1(L^{2n+1}(l;\mathbf{q}))=G_1^{\gamma_{2n+1}}(L^{2n+1}(l;\mathbf{q}))=\mathbb{Z}_l$; 
\item $P_1(L^{2n+1}(l;\mathbf{q}))=P_1^{\gamma_{2n+1}}(L^{2n+1}(l;\mathbf{q}))=\mathbb{Z}_l$;

\item $G_m^{\gamma_{2n+1}}(L^{2n+1}(l;\mathbf{q}))=P_m^{\gamma_{2n+1}}(L^{2n+1}(l;\mathbf{q}))=\operatorname{Ker}[\gamma_{2n+1},-]$ for $m\geq 1$;
\item  $P_m^{\gamma_{2n+1}}(L^{2n+1}(l;\mathbf{q}))=P_m(L^{2n+1}(l;\mathbf{q}))=\gamma_{{2n+1}\ast}G_m(\mathbb{S}^{2n+1})$ for $m>1$;
\item $G_m^{\gamma_3}(L^3(l;\mathbf{q}))=P_m^{\gamma_3}(L^{3}(l;\mathbf{q}))=G_m(L^3(l;\mathbf{q}))=P_m(L^{3}(l;\mathbf{q}))=\gamma_{3\ast}\pi_m(\mathbb{S}^3)$ for $m\geq 1$.
\end{enumerate}
\end{proposition}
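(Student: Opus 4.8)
The plan is to assemble Proposition~\ref{lens} by feeding the known Gottlieb and Whitehead center computations for spheres, together with the covering‐space correspondence of Theorem~\ref{PP} and Corollary~\ref{C}, through the specific structure of a lens space. Recall that $L^{2n+1}(l;\mathbf{q})$ is a $K(\mathbb{Z}_l,1)$ only in dimension one (when $n=0$) but in general is an aspherical-in-$\pi_1$ space with $\pi_1=\mathbb{Z}_l$ abelian acting trivially on all higher homotopy, since $\mathbb{Z}_l$ is abelian and, by \cite[Chapter~VII, Proposition~10.2]{brown}, acts trivially on $\pi_i(\mathbb{S}^{2n+1}/\mathbb{Z}_l)$ for $i>1$. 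Thus both the hypotheses ``$\pi_1(X)$ acts trivially on $\pi_m(X)$'' and ``$\pi_1(X)$ abelian'' are available, which is exactly what is needed to invoke Remark~\ref{rem.homoteq}\ref{rem.homoteq.2}, Theorem~\ref{main}, Proposition~\ref{L}, and Corollary~\ref{CCC} in the lens-space setting.

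For part (1) and (2), I would argue as follows. The inclusion chain $G_1(L^{2n+1}(l;\mathbf{q}))\subseteq G_1^{\gamma_{2n+1}}(L^{2n+1}(l;\mathbf{q}))\subseteq \pi_1(L^{2n+1}(l;\mathbf{q}))=\mathbb{Z}_l$ follows from Remark~\ref{rem.homoteq}\ref{rem.homoteq.4} (the case $f=\mathrm{id}$, $h=\gamma_{2n+1}$, noting $\gamma_{2n+1}\circ\mathrm{id}=\gamma_{2n+1}$ after identifying via the covering) together with the already-cited fact $G_1(L^{2n+1}(l;\mathbf{q}))=\mathbb{Z}_l$ from \cite[\textsc{Theorem~A}]{oprea}; since the outer terms coincide, equality propagates. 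For $P_1$, since $\pi_1=\mathbb{Z}_l$ is abelian it is central in itself, so $\mathcal{Z}_{\pi_1}(\mathrm{im})=\mathbb{Z}_l$ regardless of the subgroup; Remark~\ref{rem.homoteq}\ref{rem.homoteq.2} then gives $P_1^{\gamma_{2n+1}}=\mathbb{Z}_l$ directly, and $P_1=\pi_1$ is the Gottlieb–Oprea observation already recorded in the excerpt right after Theorem~\ref{PP}. This is essentially bookkeeping once the ambient properties of $L^{2n+1}(l;\mathbf{q})$ are noted.

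For parts (3) and (4), the engine is Corollary~\ref{C} (hence Theorem~\ref{PP}) applied to the covering $\gamma_{2n+1}\colon\mathbb{S}^{2n+1}\to L^{2n+1}(l;\mathbf{q})$: for $m\ge 1$ one gets $G_m(\mathbb{S}^{2n+1})=\gamma_{2n+1*}^{-1}(G_m^{\gamma_{2n+1}}(L^{2n+1}(l;\mathbf{q})))$ and likewise for $P_m$, and since $\gamma_{2n+1*}$ is an isomorphism on $\pi_m$ for $m>1$ (being a covering with discrete fiber) this reads $G_m^{\gamma_{2n+1}}(L^{2n+1}(l;\mathbf{q}))=\gamma_{2n+1*}G_m(\mathbb{S}^{2n+1})$; Remark~\ref{rem.homoteq}\ref{rem.homoteq.5} identifies this with $\operatorname{Ker}[\gamma_{2n+1},-]$ and also forces $G_m^{\gamma_{2n+1}}=P_m^{\gamma_{2n+1}}$ since $A=\mathbb{S}^{2n+1}$ is a sphere (valid for all $m\ge 1$, which covers the $m=1$ case of (3) too, modulo reconciling it with (1)–(2)). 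For (4), the first equality $P_m^{\gamma_{2n+1}}=P_m$ is Remark~\ref{rem.homoteq}\ref{rem.homoteq.2}: $L^{2n+1}(l;\mathbf{q})$ is simple, so $\gamma_{2n+1*}\colon\pi_*(\mathbb{S}^{2n+1})\to\pi_*(L^{2n+1}(l;\mathbf{q}))$ is an epimorphism for $*>1$ (it is surjective on each $\pi_m$, $m>1$, being a covering), giving $P_m^{\gamma_{2n+1}}=P_m$; the second equality then combines this with Corollary~\ref{C} applied for $P_m$, together with $P_m(\mathbb{S}^{2n+1})=G_m(\mathbb{S}^{2n+1})$ for $m>1$ (the sphere case: $P_m(\mathbb{S}^n)=G_m(\mathbb{S}^n)$ holds for odd $n$ by the classical fact that $G_m$ and $P_m$ agree on odd spheres, and is the content invoked in Corollary~\ref{CCC}(3)). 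Part (5) is the case $n=1$: here $\mathbb{S}^3=Sp(1)$ is a Lie group, so Lemma~\ref{SL} gives $\gamma_{3*}\pi_m(\mathbb{S}^3)\subseteq G_m(\mathbb{S}^3/\mathbb{Z}_l)$ for $m\ge 1$, while Proposition~\ref{l1} (or Corollary~\ref{C}) gives the reverse inclusion for $m>1$; thus $G_m(L^3(l;\mathbf{q}))=\gamma_{3*}\pi_m(\mathbb{S}^3)$, and chaining with the inclusions $G_m\subseteq P_m$ and parts (3)–(4) collapses everything, as recorded in Remark~\ref{lenss}(2). The only genuine obstacle I anticipate is the $m=1$ boundary: one must check that the sphere-based identities $G_1^{\gamma_{2n+1}}=P_1^{\gamma_{2n+1}}=\operatorname{Ker}[\gamma_{2n+1},-]$ in (3) are consistent with (1)–(2) saying this equals $\mathbb{Z}_l$, i.e.\ that $[\gamma_{2n+1},-]$ vanishes on all of $\pi_1$; this is exactly the statement that $\gamma_{2n+1}\in P_1(L^{2n+1}(l;\mathbf{q}))$ as an element of $\pi_1$, which follows since $\pi_1$ is abelian (Whitehead products of $1$-dimensional classes are commutators) — so no circularity, but it deserves an explicit sentence rather than being swept into ``$m\ge 1$''.
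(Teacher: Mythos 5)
Your proposal is correct and follows essentially the same route as the paper, which offers no independent argument beyond assembling Corollary~\ref{CCC}, Remark~\ref{lenss}, Oprea's theorem and Remark~\ref{rem.homoteq}; your write-up simply makes that bookkeeping explicit. Two small points of hygiene: the epimorphism clause of Remark~\ref{rem.homoteq} does not literally apply to $\gamma_{2n+1}$ (it is not surjective on $\pi_1$), so the equality $P_m^{\gamma_{2n+1}}=P_m$ for $m>1$ must rest on simplicity of the lens space (trivial $\pi_1$-action kills the Whitehead products against $\pi_1$), which you do invoke but should make the primary reason --- likewise the $m=1$ consistency of item~(3) comes from simplicity rather than from $\pi_1$ being abelian. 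Finally, your argument for item~(5) correctly produces the chain of equalities only for $m>1$, which is as it should be: at $m=1$ the right-hand side $\gamma_{3\ast}\pi_1(\mathbb{S}^3)=0$ while $G_1(L^3(l;\mathbf{q}))=\mathbb{Z}_l$, so the ``$m\geq 1$'' in the statement is a slip for ``$m>1$'' (compare Remark~\ref{lenss}, which is stated only for $m>1$); this is a defect of the statement, not of your proof.
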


Notice that $L^{2n+1}(2;\mathbf{q})=\mathbb{R}P^{2n+1}$ and  $L^{2n+1}(l; 1,\dots, 1) = \mathbb{S}^{2n+1}/\mathbb{Z}_l$, where the action $\mathbb{Z}_l\times \mathbb{S}^{2n+1}\to \mathbb{S}^{2n+1}$ is determined by the canonical inclusion $\mathbb{Z}_l\hookrightarrow \mathbb{S}^1$. 

By the above, there is a homotopy equivalence  $L^{2n+1}(l; q_1,\dots, q_n) = L^{2n+1}(l; q_1\cdots q_n,1,\linebreak \dots,1)$ for $l> 2$.
Hence, an existence of a solution of the equation $q_1\cdots q_n \equiv \pm k^n \pmod l$ implies the homotopy equivalence $L^{2n+1}(l; q_1,\dots, q_n) \simeq \mathbb{S}^{2n+1}/\mathbb{Z}_l$ for $l> 2$.

The groups $G_m(\mathbb{R}P^{2n+1})$ and  $G_m(\mathbb{S}^{2n+1}/\mathbb{Z}_l)$ for $l>2$ have been studied in Section~\ref{sec.2}.

\subsubsection*{Acknowledgments}We would like to thank  D.L.\ Gon\c calves for posing a possible generalization of the Oprea's result \cite{oprea}. The first author is grateful to Department of Mathematics of IGCE--Unesp (Brazil) for its kind hospitality, where the paper has been completed.

\end{document}